\documentclass{article}
\usepackage{amsmath}
\usepackage{amssymb}
\usepackage{amsthm, amscd}
\usepackage[usenames]{color}

\newtheorem{thm}{Theorem}[section]
\newtheorem{prop}[thm]{Proposition}
\newtheorem{lem}[thm]{Lemma}
\newtheorem{cor}[thm]{Corollary}
\newtheorem{defn}[thm]{Definition}

\theoremstyle{remark}
\newtheorem{rem}[thm]{Remark}

\newtheorem{exmp}[thm]{Example}

\numberwithin{equation}{section}

\newcommand{\mbb}{\mathbb}
\newcommand{\fra}{\mathfrak}

\newcommand{\ov}{\overline}

\newcommand{\N}{\mathbb{N}}
\newcommand{\Z}{\mbb{Z}}
\newcommand{\Q}{\mbb{Q}}
\newcommand{\R}{\mbb{R}}

\newcommand{\ds}{\displaystyle}

{}

%% Use the option review to obtain double line spacing
%% \documentclass[authoryear,preprint,review,12pt]{elsarticle}

%% Use the options 1p,twocolumn; 3p; 3p,twocolumn; 5p; or 5p,twocolumn
%% for a journal layout:
%% \documentclass[final,1p,times]{elsarticle}
%% \documentclass[final,1p,times,twocolumn]{elsarticle}
%% \documentclass[final,3p,times]{elsarticle}
%% \documentclass[final,3p,times,twocolumn]{elsarticle}
%% \documentclass[final,5p,times]{elsarticle}
%% \documentclass[final,5p,times,twocolumn]{elsarticle}

%% For including figures, graphicx.sty has been loaded in
%% elsarticle.cls. If you prefer to use the old commands
%% please give \usepackage{epsfig}

%% The amssymb package provides various useful mathematical symbols
% %\usepackage{amssymb}
%% The amsthm package provides extended theorem environments
%% \usepackage{amsthm}

%% The lineno packages adds line numbers. Start line numbering with
%% \begin{linenumbers}, end it with \end{linenumbers}. Or switch it on
%% for the whole article with \linenumbers.
%% \usepackage{lineno}

\begin{document}

\title{$\omega$-stability and Morley rank of bilinear maps, rings and nilpotent groups}

\author{Alexei G. Myasnikov\footnote{Department of Mathematical Sciences,
Stevens Institute of Technology,
Castle Point on Hudson,
Hoboken, NJ 07030, USA, email: amiasnikov@gmail.com}, Mahmood Sohrabi\footnote{Department of Mathematical Sciences,
  Stevens Institute of Technology,
 Castle Point on Hudson,
 Hoboken, NJ 07030, USA, email: msohrab1@stevens.edu}}

\maketitle

\begin{abstract}
In this  paper we study the algebraic structure of $\omega$-stable bilinear maps, arbitrary rings and nilpotent groups. We will also provide rather complete structure theorems for the above structures in the finite Morley rank case.  
\end{abstract}

Keywords: Bilinear map, Largest ring of scalars, Ring, Nilpotent group, $\omega$-stability, Morley rank

\section{Introduction}\label{se:intro}
In this  paper we provide structure theorems for the algebraic structure of $\omega$-stable bilinear maps, arbitrary rings and nilpotent groups. We will also provide rather complete structure theorems for the above structures in the finite Morley rank case.  Our results are based on several classical works from  1970's. Macintyre in \cite{mac1,mac2} showed that torsion-free $\omega$-stable abelian groups  are precisely the divisible ones, while infinite $\omega$-stable integral domains with unit are algebraically closed fields.  In \cite{CR} Cherlin and Reineke described algebraic structure of $\omega$-stable, as well as of finite Morley rank, commutative associative rings with unit. These are the results we use through out the paper. In \cite{Z82} Zilber gave a precise description of $\aleph_1$-categorical torsion-free nilpotent groups and rings of characteristic zero. Our description of groups and rings in question may be seen as a natural generalization of the Zilber's one, only taken further to the finite Morley rank and $\omega$-stable cases. However, our techniques are quite different, for example, we do not use Zilber's indecomposability theorem in our theorems on the Morley rank of rings and nilpotent groups. Instead, we rely on first-order interpretability of the largest ring of scalars and its actions in a bilinear map from \cite{M89,M90a,M90b}. This gives a powerful  general method in model-theoretical algebra that works for various groups and rings, which are not necessarily even  $\omega$-stable. Indeed,  the technique of bilinear maps was essential in proving the following results: description of the algebraic structure of models of complete theories of finite dimensional algebras (see Theorem~\ref{models-alg:thm} below for the exact statement in the directly indecomposable case) and unipotent groups, as well as in giving a decidability criterion of such theories \cite{M90c};  a precise description of arbitrary groups elementarily equivalent to a given free nilpotent group of finite rank \cite{MS2010}; Belegradek's characterization of groups elementarily equivalent to a given unitriangular group  $UT(n,\mathbb{Z})$; elementary coordinatization of finitely generated nilpotent groups  \cite{MS2014}. %It was clear already in 1990's that this technique could be useful  in approaching questions on $\omega$-stability and alike in groups and rings (see, for example, announcement \cite{Mconf}) but at the time the main focus in the area  was on Cherlin-Zilber algebraicity conjecture on simple groups of finite Morley rank, and consequently on Borovik's program (we refer to books \cite{Poizat,BN94, ABC} on these subjects).  
Recently, some interest in torsion-free nilpotent groups of finite Morley rank reappeared, though from a different angle. In \cite{AW} Altinel and Wilson studied algebraic structure of such groups proving, in particular that they have faithful linear representations over algebraically closed fields of characteristic zero. Their techniques are inspired by those of B. Zilber~\cite{Z74}. Some other remarkable results have been obtained by O. Fr\'econ~\cite{F06, F08} where the main objective is to find and study a suitable analogue of unipotence in the context of groups of finitely Morley rank. We encourage the reader to compare Fact 3.13 in ~\cite{F08}, which offers a structure theorem for nilpotent groups of finite Morley rank with our Theorem~\ref{fmr:garbit:mainthm} below. It seems that our result clarifies the precise structure of the components in the mentioned structure theorem.

We now describe the main results and structure of this paper. Section~\ref{classical:sec} collects some of the classical structure theorems for $\omega$-stable commutative rings and groups. 

We discuss bilinear maps in section~\ref{bilin:sec}. Most results in the torsion-free case are taken from~\cite{M90a} and~\cite{M90b} and inevitably we need to recall some material from those papers. Later in that section we provide rather complete structure theorems for arbitrary $\omega$-stable (of finite Morley rank) bilinear maps. 

In Section~\ref{algebras:sec} we analyze $\omega$-stable rings (of finite  Morley rank). For us a \emph{ring} is just an abelian group with operation $``+"$ together with a binary operation $``\cdot"$, where $``\cdot"$ distributes over $``+"$. However a \emph{scalar ring} is always assumed to be commutative associative with a unit. We say that a ring $R$ has \emph{characteristic zero} if its additive group $R^+$ is torsion-free.  Here are the main results in characteristic zero case.   
\begin{thm}\label{w-st-al:mainthm} Let $R$ be an $\omega$-stable ring of characteristic zero. Then $R$ can be decomposed into a finite direct product $$R\cong R_1\times\cdots \times R_n\times R_0$$ where each $R_i$ is an indecomposable $k_i$-algebra, $k_i$ is a characteristic zero algebraically closed field, and $R_0$ is a $\Q$-algebra with zero multiplication.\end{thm}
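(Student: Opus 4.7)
The plan is to analyse the multiplicative bilinear map of $R$ via its largest ring of scalars, identify that scalar ring through the Cherlin--Reineke classification, and read off the direct product decomposition from the resulting orthogonal idempotents.

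First, since $R^+$ is a torsion-free $\omega$-stable abelian group, Macintyre's theorem (recalled in Section~\ref{classical:sec}) gives that $R^+$ is divisible, hence a $\Q$-vector space, so $R$ is naturally a $\Q$-algebra with $\Q$-bilinear multiplication $\mu\colon R\times R\to R$. I then invoke the machinery of Section~\ref{bilin:sec} (drawn from \cite{M89,M90a,M90b}) to attach to $\mu$ its largest ring of scalars $P$; this $P$ is commutative, associative, unital, first-order interpretable in $R$, and contains $\Q$. Since $R$ is $\omega$-stable so is $P$, and since $R^+$ is torsion-free $P$ has characteristic zero.

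Next I apply Cherlin--Reineke to $P$: as an $\omega$-stable commutative associative unital ring of characteristic zero, $P$ decomposes as a finite direct product of indecomposable factors. Combining this with Macintyre's theorem that an infinite $\omega$-stable integral domain with unit is an algebraically closed field, each indecomposable factor of $P$ that acts nontrivially on $\mu$ must be an algebraically closed field $k_i$ of characteristic zero; the remaining factor corresponds to the part of $R$ on which $\mu$ is identically zero and reduces, in the $\omega$-stable setting, to a single copy of $\Q$. Write $1_P=e_1+\cdots+e_n+e_0$ for the associated orthogonal idempotents. The action of $P$ on $R$ then gives an additive decomposition $R=e_1R\oplus\cdots\oplus e_nR\oplus e_0R$; by the compatibility of the scalars with $\mu$, we have $(e_iR)(e_jR)=0$ for $i\neq j$, so this additive decomposition is in fact a ring direct product $R\cong R_1\times\cdots\times R_n\times R_0$. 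Each $R_i=e_iR$ for $i\geq 1$ carries a natural $k_i$-algebra structure and will be indecomposable, since any further ring decomposition would produce a new central idempotent refining $P$ and contradicting its maximality; and $R_0=e_0R$ is a $\Q$-algebra with zero multiplication.

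The main obstacle I foresee is pinning down precisely that the ``annihilator'' factor of $P$ is a single $\Q$ and not a larger ring: a naive centroid over a zero-multiplication subspace would recover the full endomorphism ring $\mathrm{End}_\Q(R_0)$, which is far from $\omega$-stable once $R_0$ is infinite dimensional. Ruling this out requires a careful use of the definition of the largest ring of scalars from \cite{M90a,M90b} recalled in Section~\ref{bilin:sec}, together with the $\omega$-stability of $P$, to force the action on the annihilator part to restrict to $\Q$. Once this is done, the decomposition assembled in the previous paragraph is not just an abstract split but is interpretable in $R$, and the proof is complete.
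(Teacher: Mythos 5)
Your overall strategy (interpret the largest ring of scalars, apply Cherlin--Reineke, read off idempotents) is the paper's strategy, but two steps as written would fail. First, the factors of the scalar ring are \emph{not} fields, and Macintyre's theorem on $\omega$-stable integral domains does not apply to them: Theorem~\ref{CR:stable} produces local rings $A_i$ with nilpotent, possibly nonzero, maximal ideals $J_i$, and such rings have zero divisors. (Concretely, for $R=k[\epsilon]/(\epsilon^2)$ over an algebraically closed $k$ the centroid is $R$ itself, which is $\omega$-stable, indecomposable, and not a domain.) The theorem's conclusion only asserts that $R_i$ is an algebra over the \emph{residue field} $k_i=A_i/J_i$; to get that you need Cohen's structure theorem (Theorem~\ref{cohen:thm} via Lemma~\ref{nilrep:lem}) to produce a field of representatives $L\cong k_i$ inside $A_i$, and then restriction of scalars as in Proposition~\ref{L-enrich:prop}. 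Your proposal skips this entirely by asserting the factors are fields.

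Second, the "main obstacle" you flag about the annihilator is real and is not closed by your argument; it signals that you are applying the scalar-ring construction to the wrong bilinear map. The largest ring of scalars is defined (and interpretable, Lemma~\ref{A(R):int:lem}) only for the \emph{full non-degenerate} map $R/Ann(R)\times R/Ann(R)\to R^2$, together with the compatibility of the linking map $\eta:R^2\to R/Ann(R)$; this is the ring $A(R)$. Its idempotents therefore decompose $R/Ann(R)$ and $R^2$, not $R$ itself, so your direct sum $R=e_1R\oplus\cdots\oplus e_nR\oplus e_0R$ is not available as stated. The paper lifts the decomposition to $R$ by first splitting $R=R_F\times R_0$, where $R_0$ is a $\Q$-vector-space complement of $Ann(R)\cap R^2$ in $Ann(R)$ (this uses that $R$ is a $\Q$-algebra, Proposition~\ref{fa:prop}) and $R_F$ is a foundation, and only then applying Proposition~\ref{decomringalg:prop} to the regular algebra $R_F$. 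In particular $R_0$ is not cut out by an idempotent of the scalar ring and there is no reason for the "annihilator factor" to reduce to a single copy of $\Q$: $R_0$ is simply a possibly infinite-dimensional $\Q$-space with zero multiplication, and the scalar ring never sees it because it acts on $R/Ann(R)$. Once you make these two corrections, the rest of your outline matches the paper's proof.
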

\begin{thm} \label{fmr-alg:mainthm}  Let $R$ be  a  ring of characteristic zero.  Then $R$ is of finite Morley rank if and only if $R$ can be decomposed into a finite direct sum $$R\cong R_1\times\cdots \times R_n\times R_0$$ where each $R_i$ is a finite dimensional indecomposable $k_i$-algebra, $k_i$ is a characteristic zero algebraically closed field and $R_0$ is a $\Q$-algebra with zero multiplication.\end{thm}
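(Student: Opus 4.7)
The plan is to deduce the forward direction from Theorem~\ref{w-st-al:mainthm} together with one additional ingredient, and to verify the reverse direction by a direct Morley rank calculation on each factor.

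For the reverse direction, I would show each summand contributes finite rank. A finite-dimensional $k_i$-algebra $R_i$ is interpretable in the algebraically closed field $k_i$; viewed as a $k_i$-vector space of dimension $d_i$ it has Morley rank exactly $d_i$ (since $k_i$ is strongly minimal), and passing to a ring reduct can only decrease the rank. The factor $R_0$ is a $\Q$-algebra with trivial multiplication, so its ring structure reduces to its torsion-free divisible additive group; any such nontrivial group is strongly minimal of Morley rank $1$. Since finite direct products of definable groups add Morley ranks, $R$ is of finite rank.

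For the forward direction, a ring of finite Morley rank is in particular $\omega$-stable, so Theorem~\ref{w-st-al:mainthm} yields a decomposition $R\cong R_1\times\cdots\times R_n\times R_0$ of exactly the shape claimed, except that at this stage we only know each $R_i$ to be an indecomposable $k_i$-algebra and must still establish $\dim_{k_i} R_i<\infty$. Each $R_i$ is a definable direct factor of $R$, hence of finite Morley rank. Using the interpretability of the ``largest ring of scalars'' $k_i$ inside $R_i$, together with the associated scalar action cited in the introduction, $R_i^+$ acquires a definable $k_i$-vector space structure. I would then invoke the standard fact that a vector space over an infinite field of infinite dimension has infinite Morley rank---its proper subspaces form an infinite strictly increasing chain of definable subgroups---forcing $\dim_{k_i} R_i$ to be finite.

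The main obstacle I anticipate is making the interpretability statement precise: showing that, inside the ring $R_i$ alone, the field $k_i$ together with its scalar action on $R_i$ is first-order definable without additional data. This is where the technology of the largest ring of scalars from \cite{M89,M90a,M90b} enters, and once this definability is in hand, the Morley rank bound and the rest of the argument are routine.
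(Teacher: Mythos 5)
Your reverse direction is fine and agrees with the paper's. The forward direction, however, has a genuine gap at exactly the point you flag as ``the main obstacle,'' and the obstacle is not merely technical: the route you propose cannot be made to work as stated. First, ``each $R_i$ is a definable direct factor of $R$, hence of finite Morley rank'' is unjustified --- the paper's remark immediately after the theorem statement explicitly says that the components $R_i$ need not be definable in $R$. Second, and more fundamentally, the largest ring of scalars $A(R)$ is by construction a subring of $End(R/Ann(R))$: its definable action lives on the quotient $R/Ann(R)$ and on the ideal $R^2$, not on $R$ itself. The $k_i$-algebra structure on a foundation (and hence on each $R_i$) is obtained by the abstract enrichment of Proposition~\ref{L-enrich:prop}, and there is no reason for that enrichment to be first-order in the pure ring $R$ (or in $R_i$). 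So the infinite chain of definable subspaces you want to exhibit inside $R_i^+$ is not available there.

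The paper's proof routes around this. It applies Theorem~\ref{CR:fmr} to the interpretable scalar ring $A(R)$ to obtain \emph{Noetherian} local factors $A_i$ with idempotents $e_i$, and then bounds dimensions not of $R_i$ directly but of the two pieces $e_i(R/Ann(R))$ and $e_i(R^2\cap Ann(R))$, on which the scalar action \emph{is} interpretable: the two-sorted structure $\langle R/Ann(R), A(R), s\rangle$ is interpretable in $R$ by Lemma~\ref{A(R):int:lem}, and $R^2$ is definable in $R$ by Lemma~\ref{R2:defn:lem}, which in turn rests on the finite-width argument of Corollary~\ref{bilinfmr:fw:cor}. Lemma~\ref{local-alg:lem} --- essentially your chain-condition argument, packaged as the bound $r_k(M)\leq RM(M)$ --- then gives finite $k_i$-dimension of each piece, and since $R_i$ is a central extension of $e_i(Ann(R)\cap R^2)$ by $e_i(R/Ann(R))$, it is finite dimensional. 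To repair your proposal you would need to (i) drop the claim that the $R_i$ are definable, (ii) transfer the rank argument from $R_i$ to these two definable/interpretable modules, and (iii) supply the definability of $R^2$, which is a lemma in its own right.
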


\begin{rem} Note that in Theorem~\ref{fmr-alg:mainthm} we did not claim that any of the components $R_i$, $i=0,\ldots, n$ are definable in $R$, and in general there is no reason that they are. However the statement implies that each of these components has finite Morley rank.\end{rem}
Proofs of Theorem~\ref{w-st-al:mainthm} and Theorem~\ref{fmr-alg:mainthm} will appear in Sections \ref{wstable-al:sec} and \ref{fmr-alg:sec} respectively. The statements regarding the arbitrary $\omega$-stable rings (of finite Morley rank) get more complicated and a bit less illuminating, but still rather complete. We do not state them here in the introduction and refer the reader to Section~\ref{algebras:sec}. We also provide a criterion for uncountable categoricity of rings in Section~\ref{algebras:sec} (Theorem~\ref{cat:alg:thm}). In Section~\ref{malcev-cor:sec} we give a quick overview of the so-called Mal'cev correspondence between rational Lie algebras and torsion-free divisible nilpotent groups. The reader may also refer to~\cite{AW} for further details. Finally in Section~\ref{groups:sec} we look at $\omega$-stable nilpotent groups, as well as at nilpotent groups of finite  Morley rank. The structure theorems in the torsion-free case will almost immediately follow from those on characteristic zero rings and the Mal'cev correspondence mentioned above. Here are the main results in this case. Proofs will appear at the beginning of Section~\ref{groups:sec} and Section~\ref{fmrG:section}.     

\begin{thm}\label{w-stable-group:mainthm} Assume $G$ is a torsion-free $\omega$-stable nilpotent group. Then $G$ is a finite direct product $$G\cong G_1\times \cdots \times G_n\times G_0$$ where each $G_i$, $i\neq 0$, is an indecomposable $k_i$-group where $k_i$ is a characteristic zero algebraically closed field and $G_0$ is torsion-free divisible abelian group.\end{thm}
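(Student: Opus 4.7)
The plan is to reduce the statement to the characteristic zero ring theorem \ref{w-st-al:mainthm} via the Mal'cev correspondence. First I would show that a torsion-free $\omega$-stable nilpotent group $G$ is automatically divisible. By induction on the nilpotency class $c$: the center $Z(G)$ is a definable torsion-free $\omega$-stable abelian group, hence divisible by Macintyre's theorem; the quotient $G/Z(G)$ is a torsion-free nilpotent group of class $c-1$ which is interpretable in $G$ and therefore $\omega$-stable, hence divisible by induction; a standard argument (lifting roots from $G/Z(G)$ and correcting by central divisions) shows $G$ itself is divisible.

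Once $G$ is torsion-free divisible nilpotent, the Mal'cev correspondence from Section~\ref{malcev-cor:sec} produces a nilpotent Lie $\Q$-algebra $L=L(G)$ on the same underlying set, with the Lie operations given by the Baker--Campbell--Hausdorff inversion formulas (and the group operations recoverable from $L$ by the BCH formula). Because the class $c$ is fixed, both translations are given by first-order formulas in the language of groups and in the language of rings respectively, so $L$ is interpretable in $G$ (without parameters). Consequently $L$ is an $\omega$-stable ring of characteristic zero, and Theorem~\ref{w-st-al:mainthm} applies:
\begin{equation*}
L \;\cong\; L_1\times\cdots\times L_n\times L_0,
\end{equation*}
where each $L_i$ is an indecomposable $k_i$-algebra over a characteristic zero algebraically closed field $k_i$, and $L_0$ is a $\Q$-algebra with zero multiplication.

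Transporting this decomposition back to $G$, direct products of Lie $\Q$-algebras correspond under Mal'cev to direct products of torsion-free divisible nilpotent groups, giving $G\cong G_1\times\cdots\times G_n\times G_0$ with $G_i$ corresponding to $L_i$. For $i\neq 0$, the scalar action of $k_i$ on $L_i$ transports through BCH to a scalar action of $k_i$ on $G_i$ making $G_i$ an indecomposable $k_i$-group (indecomposability is preserved because a nontrivial direct decomposition of $G_i$ would yield, via Mal'cev, a nontrivial decomposition of $L_i$). For the last factor, $L_0$ has trivial Lie bracket, so BCH reduces to ordinary addition and $G_0$ is just the additive group of a $\Q$-vector space, i.e. a torsion-free divisible abelian group.

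The main obstacle I expect is the first step: ensuring divisibility of $G$. The inductive passage through $Z(G)$ and $G/Z(G)$ requires that extractions of $n$-th roots in $G$ lift uniquely from $G/Z(G)$ modulo a central correction that itself admits an $n$-th root; this is routine but must be done carefully in the torsion-free divisible setting to justify applying Mal'cev. After that, the bulk of the work is already packaged inside Theorem~\ref{w-st-al:mainthm}, and the final transfer through Mal'cev is essentially formal because both directions of the correspondence are interpretations of bounded complexity in fixed nilpotency class.
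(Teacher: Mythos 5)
Your proposal is correct and follows essentially the same route as the paper: establish that $G$ is a $\Q$-group, pass to the Lie $\Q$-algebra $L=\log(G)$ via the Mal'cev correspondence (which is interpretable in $G$, hence $\omega$-stable), apply Theorem~\ref{w-st-al:mainthm}, and transport the decomposition back. The only cosmetic difference is in the preliminary step: you obtain divisibility by induction on the nilpotency class with a central root-lifting argument, whereas the paper (Lemma~\ref{stable-divisible}) gets unique rational powers of each $g$ directly from the definable torsion-free $\omega$-stable abelian group $Z(C_G(g))$; both are fine.
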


\begin{thm} \label{fmr:groups:mainthm} A torsion-free nilpotent group $G$ has finite Morley rank in the language of groups if and only if $G$ is a finite direct product $$G\cong G_1\times \cdots \times G_n\times G_0$$ where each $G_i$, $i\neq 0$, is a unipotent algebraic group over $k_i$, $k_i$ is a characteristic zero algebraically closed field, and $G_0$ is a torsion-free divisible abelian group.\end{thm}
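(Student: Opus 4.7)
The plan is to reduce Theorem~\ref{fmr:groups:mainthm} to the ring-theoretic Theorem~\ref{fmr-alg:mainthm} via the Mal'cev correspondence of Section~\ref{malcev-cor:sec}, in parallel with how Theorem~\ref{w-stable-group:mainthm} is reduced to Theorem~\ref{w-st-al:mainthm}. Assume first that $G$ is torsion-free nilpotent of finite Morley rank. Finite Morley rank implies $\omega$-stability, and Macintyre's theorem applied to the factors of the upper central series forces $G$ to be divisible. The Mal'cev correspondence then assigns to $G$ a nilpotent Lie algebra $L=L(G)$ over $\Q$ of the same nilpotency class as $G$; because $G$ is nilpotent, the Baker--Campbell--Hausdorff series that defines the correspondence terminates, and the resulting bracket, exponential, and logarithm operations are first-order definable in each structure without parameters. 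Thus $G$ and $L$ are bi-interpretable, so $L$ is a ring of characteristic zero of finite Morley rank.

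Applying Theorem~\ref{fmr-alg:mainthm} gives
$$L \cong L_1 \times \cdots \times L_n \times L_0,$$
where each $L_i$ is a finite-dimensional indecomposable $k_i$-algebra over a characteristic zero algebraically closed field $k_i$ and $L_0$ is a $\Q$-algebra with trivial multiplication. Each finite-dimensional nilpotent $k_i$-Lie algebra $L_i$ exponentiates, via the Baker--Campbell--Hausdorff formula, to a unipotent algebraic group $G_i = \exp(L_i)$ over $k_i$, while the abelian $L_0$ corresponds to a torsion-free divisible abelian group $G_0$. Since direct products of nilpotent Lie algebras correspond to direct products of the associated divisible nilpotent groups under Mal'cev, the desired decomposition $G \cong G_1 \times \cdots \times G_n \times G_0$ follows.

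For the converse, each $G_i$ is interpretable in its field of scalars $k_i$ and so has finite Morley rank, while $G_0$ is a $\Q$-vector space and therefore strongly minimal in the language of groups. A finite direct product of groups of finite Morley rank has finite Morley rank, establishing the other implication. The delicate point, and the main obstacle, is formalizing the Mal'cev correspondence as a parameter-free bi-interpretation that transports finite Morley rank between $G$ and $L(G)$; once this is in place the argument reduces to quoting Theorem~\ref{fmr-alg:mainthm} together with the classical Mal'cev--Lazard construction of unipotent algebraic groups from finite-dimensional nilpotent Lie algebras over algebraically closed fields of characteristic zero.
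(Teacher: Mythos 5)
Your proposal is correct and follows essentially the same route as the paper: reduce to the ring-theoretic Theorem~\ref{fmr-alg:mainthm} via the Mal'cev correspondence, exactly as the paper does by combining the argument of Theorem~\ref{w-stable-group:mainthm} (divisibility via Macintyre, interpretability of $L=\log(G)$ from Lemma~\ref{bi-inter:lem}) with the finite Morley rank ring decomposition. The ``delicate point'' you flag is precisely what the paper delegates to Lemma~\ref{bi-inter:lem} (citing \cite{AW} and \cite{stewart}), so nothing further is needed.
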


Then applying the following theorems of A. Nesin and our results we get the structure theorems for arbitrary $\omega$-stable nilpotent groups (of finite Morley rank).

We will state A. Nesin's theorems below right after the following definition.

A group $G$ is said to be a \emph{central product} of some of its subgroups $H_1, \ldots, H_n$ if $H=H_1\cdots H_n$, $H_i\unlhd G$ for all $i=1, \ldots n$, and $[H_i,H_j]=1$ if $i \neq j$. Then we write $G=H_1\ast
 \cdots \ast H_n.$ Recall that for subgroups $H$ and $K$ of $G$, $[H,K]$ is the subgroup generated by all the commutators $x^{-1}y^{-1}xy=[x,y]$, $x\in H$, $y\in K$.

 \begin{thm}[\cite{N91}, Theorem~2]\label{nesin1} Let $G$ be an $\omega$-stable nilpotent group. Then $G$ is a central product $D\ast C$ where $D$ and $C$ are definable characteristic subgroups of $G$, $D$ is divisible, and $C$ has bounded exponent.   
 \end{thm}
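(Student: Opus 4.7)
The plan is to induct on the nilpotence class $c$ of $G$. The base case $c = 1$ is the classical structure theorem for $\omega$-stable abelian groups: any such group decomposes as $A = D_0 \oplus C_0$, where $D_0 = \bigcap_{n\geq 1} nA$ is the maximal divisible subgroup and $C_0$ has bounded exponent; both summands are definable and characteristic, and the intersection stabilizes at some $n!A$ because $\omega$-stable groups satisfy the descending chain condition (DCC) on definable subgroups.

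For the inductive step, the first task is to construct $D$. Set $G^{[n]} := \langle x^n : x \in G\rangle$. Standard commutator calculus in a nilpotent group (Hall--Petresco) shows that every product of $n$-th powers can be rewritten, modulo the lower central series, as a product of a bounded number of $n$-th powers; consequently $G^{[n]}$ is the image of a fixed-length word map, hence definable, and the descending chain $G^{[n!]}$ stabilizes by the DCC. Let $D$ be the stable value; then $D = \bigcap_n G^{[n]}$ is definable and characteristic. Divisibility of $D$ and the fact that $G/D$ has bounded exponent are established by induction: $Z(G)$ is $\omega$-stable abelian (and decomposes by the base case), while $G/Z(G)$ is $\omega$-stable nilpotent of smaller class (and decomposes by the inductive hypothesis). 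Define $C$ as a definable characteristic subgroup of bounded exponent, chosen to lift the bounded-exponent factor in $G/Z(G)$ while absorbing the bounded-exponent part $C_Z$ of $Z(G)$.

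The key remaining step is to establish $[D, C] = 1$ and $G = DC$. By the inductive hypothesis applied to $G/Z(G)$, the images $\bar D$ and $\bar C$ commute, so $[D, C] \subseteq Z(G)$. The induced commutator map $D \times C \to Z(G)$ factors through the abelianizations modulo $Z(G)$, and a Hall--Petresco expansion shows that $[D,C]$ is simultaneously divisible (inherited from $D$) and of exponent dividing $\exp(C)$; any abelian group with both properties is trivial, so $[D,C] = 1$. The equality $G = DC$ follows by assembling the central product decomposition of $G/Z(G)$ with the direct sum decomposition of $Z(G)$. The main obstacle is the commutator calculus needed to propagate divisibility through iterated commutators and to verify the definability of $G^{[n]}$; both are handled uniformly by Hall--Petresco together with the DCC available in the $\omega$-stable setting.
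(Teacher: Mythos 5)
First, a remark on the comparison you asked for: the paper does not prove this statement at all. It is quoted verbatim as Theorem~2 of Nesin's paper \cite{N91} and used as a black box, so there is no internal proof to measure your argument against; I can only assess it on its own terms.

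Your argument has a genuine gap at its foundation, namely the claim that $G^{[n]}=\langle x^n:x\in G\rangle$ is ``the image of a fixed-length word map, hence definable'' because Hall--Petresco rewriting bounds the width of the power word in terms of the nilpotency class alone. This is false already in class $2$. In the free class-$2$ nilpotent group $F_d$ of rank $d$, collecting a product $w_1^4\cdots w_k^4$ that lies in $\gamma_2$ gives $(w_1\cdots w_k)^4\prod_{i<j}[w_j,w_i]^{6}$ with $w_1\cdots w_k\in\gamma_2$; modulo $\gamma_2^4$ this is $\bigl(\prod_{i<j}[w_j,w_i]\bigr)^2$, and the corresponding alternating form $\sum_{i<j}\bar w_j\wedge\bar w_i=\sum_j s_j\wedge s_{j-1}$ (where $s_j=\bar w_1+\cdots+\bar w_j$ in $\mathbb{F}_2^{\,d}$) lies in $\Lambda^2$ of a subspace of dimension at most $k-1$, hence has rank at most $k-1$. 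Consequently the element $[x_1,x_2]^2[x_3,x_4]^2\cdots[x_{d-1},x_d]^2$ of $F_d^{[4]}$ cannot be written as a product of fewer than $d+1$ fourth powers, so the width of $x^4$ is unbounded over class-$2$ nilpotent groups. Without a uniform width bound, $G^{[n]}$ is an increasing union of definable sets that are not subgroups, the DCC does not apply, and your $D=\bigcap_n G^{[n!]}$ is not known to be definable. The second, and in fact central, gap is that $C$ is never constructed: you stipulate a definable characteristic subgroup of bounded exponent ``lifting'' the bounded factor of $G/Z(G)$, but the preimage of that factor contains $Z(G)$, which need not have bounded exponent (it may contain Pr\"ufer components), and the set of elements of order dividing a fixed $m$ is not a subgroup of a nilpotent group; producing such a $C$ is exactly the hard part of Nesin's theorem and is not addressed. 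What is correct, and is indeed the standard key trick, is your final step: once $[D,C]\le Z(G)$ the commutator is biadditive, so for $m=\exp(C)$ and $d=(d')^m$ one gets $[d,c]=[d',c]^m=[d',c^m]=1$, whence $[D,C]=1$ --- but note that even reaching $[D,C]\le Z(G)$ requires the image of $C$ in $G/Z(G)$ to land in the bounded central factor, which again presupposes the missing construction of $C$.
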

 \begin{thm}[\cite{N91}, Corollary to Theorem~2]\label{nesin1a}  The nilpotent groups of finite Morley rank are exactly the central products $D\ast C$ where $D$ and $C$ both have finite Morley rank, $D$ is divisible,  $C$ has bounded exponent, and $D\cap C$ is finite. 
 	\end{thm}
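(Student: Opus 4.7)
The plan is to prove the two implications separately, using Nesin's Theorem~\ref{nesin1} for the forward direction and a standard quotient-by-finite-normal-subgroup argument for the converse. The principal difficulty will be showing that in the decomposition $G=D\ast C$ produced by Theorem~\ref{nesin1}, the intersection $D\cap C$ is automatically finite under the finite Morley rank hypothesis.

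For the forward direction, assume $G$ is nilpotent of finite Morley rank. Since finite Morley rank implies $\omega$-stability, Theorem~\ref{nesin1} applies and yields $G=D\ast C$ with $D,C$ definable characteristic subgroups of $G$, $D$ divisible, and $C$ of bounded exponent; definability in $G$ transfers finite Morley rank to both factors. The centrality condition $[D,C]=1$ forces $D\cap C\subseteq Z(D)$. Invoking the standard group-theoretic fact that the center of a divisible nilpotent group is divisible, $Z(D)$ is a divisible abelian group of finite Morley rank, hence by the classification of such groups (recalled in Section~\ref{classical:sec}) decomposes as $\Q^m\oplus\bigoplus_{p\in S}(\Z/p^\infty)^{n_p}$ with $m$, $|S|$, and all $n_p$ finite. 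Since $D\cap C$ has some bounded exponent $N$ inherited from $C$, it sits inside $Z(D)[N]$, which is finite in any such group, so $D\cap C$ is finite.

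For the converse, assume $G=D\ast C$ with $D,C$ nilpotent of finite Morley rank, $D$ divisible, $C$ of bounded exponent, and $D\cap C$ finite. The multiplication map $\mu:D\times C\to G$, $(d,c)\mapsto dc$, is a group homomorphism by virtue of $[D,C]=1$, is surjective because $G=DC$, and has kernel $\{(z,z^{-1}):z\in D\cap C\}$ isomorphic to the finite group $D\cap C$. Thus $G$ is the quotient of the finite-Morley-rank group $D\times C$ by a finite normal subgroup, which preserves finite Morley rank (in fact $\mathrm{MR}(G)=\mathrm{MR}(D)+\mathrm{MR}(C)$); nilpotence of $G$ follows immediately from nilpotence of $D,C$ together with $[D,C]=1$.

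The main obstacle throughout is the finiteness of $D\cap C$; the cleanest route, as sketched above, combines the sublemma on divisibility of the center of a divisible nilpotent group with the classification of divisible abelian groups of finite Morley rank. An alternative path, avoiding that sublemma, would analyze the fibers of the $N$-th power map on the connected component $D^0$, using divisibility to obtain surjectivity and the FMR arithmetic of generic fibers to conclude that the $N$-torsion of $D$, and hence $D\cap C$, is finite.
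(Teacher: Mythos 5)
The paper does not actually prove this statement: it is imported verbatim from Nesin~\cite{N91} as the Corollary to his Theorem~2, so there is no in-paper argument to compare yours against. Taken on its own terms, your proof is essentially correct. Two small remarks. First, in the forward direction you can avoid the auxiliary fact that the centre of a divisible nilpotent group is divisible (true, but a separate classical input): since every element of $D\cap C$ has order dividing the exponent $N$ of $C$, you have $D\cap C\le T$, the torsion part of $D$, and Theorem~\ref{nesin2} --- already quoted in the paper --- says $T$ is a central subgroup with $D=T\times N$ and that $T$ has finite Morley rank; thus $T$ is a divisible torsion abelian group of finite Morley rank whose $N$-torsion is finite, which gives the same conclusion using only results the paper has on hand. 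Second, your statement of the classification is slightly too strong: a divisible abelian group of finite Morley rank has the form $\Q^{(m)}\oplus\bigoplus_{p}\Z(p^{\infty})^{(n_p)}$ with each $n_p$ finite, but $m$ and the set of primes occurring may well be infinite. Fortunately your argument only uses that each $n_p$ is finite and that only finitely many primes divide $N$, so the $N$-torsion subgroup is still finite and the proof survives. The converse direction, realizing $G$ as the quotient of $D\times C$ by the finite antidiagonal copy of $D\cap C$ and invoking preservation of finite Morley rank under interpretability, is correct and standard.
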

 \begin{thm}[\cite{N91}, Lemma 3 and Theorem 3]\label{nesin2}
 Let $G$ be a divisible $\omega$-stable nilpotent group. Then $G'=[G,G]$ is torsion-free. Moreover if $T$ denotes the torsion part of $G$, then $T$ is central in $G$ and $G=T\times N$ for some torsion-free divisible nilpotent subgroup $N$. If $G$ has finite Morley rank $T$ is also of finite Morley rank.\end{thm}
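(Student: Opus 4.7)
My plan is to establish the four assertions in the theorem in order: (i) $T$ is a divisible subgroup of $G$, (ii) $T \subseteq Z(G)$, (iii) $G$ splits as $T \times N$ with $N$ torsion-free divisible nilpotent, and (iv) the Morley rank statement. Once (iii) is in hand, the torsion-freeness of $G'$ is automatic, because $G' = N'$ is a subgroup of the torsion-free group $N$. The workhorse throughout will be the elementary fact that a divisible abelian group of finite exponent is trivial, combined with induction on the nilpotency class.

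For (i), in any nilpotent group the torsion elements form a characteristic subgroup $T$. To see that $T$ is divisible, suppose $t \in T$ with $t^n = 1$ and $k \geq 1$; by divisibility of $G$ pick $s \in G$ with $s^k = t$, and then $s^{kn} = 1$ forces $s \in T$. For (ii), I proceed by induction on the nilpotency class $c$ of $G$. The base case $c = 1$ is trivial. For $c \geq 2$, the quotient $\ov{G} = G/Z(G)$ is a divisible $\omega$-stable nilpotent group of class $c-1$, so by induction its torsion part is central; pulling back, $[T, G] \subseteq Z(G)$. For any $t \in T$ of order $n$, the map $\phi_t \colon G \to Z(G)$ defined by $\phi_t(g) = [t, g]$ is then a homomorphism: from the identity $[t, gh] = [t, h]\,[t, g]^h$ and the centrality of $[t, g]$ one reads off $\phi_t(gh) = \phi_t(g)\phi_t(h)$. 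Its image is divisible as a homomorphic image of $G$, and has exponent dividing $n$, because $[t, g]^n = [t^n, g] = 1$ (the first equality again using that $[t,g]$ is central). Since a divisible abelian group of finite exponent is trivial, $\phi_t \equiv 1$, i.e.\ $t \in Z(G)$.

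For (iii), the centrality and divisibility of $T$ convert the sequence $1 \to T \to G \to G/T \to 1$ into a central extension whose quotient $G/T$ is torsion-free divisible nilpotent. My plan is to build a complement $N$ inductively along the upper central series of $G/T$: starting at the abelianization $G/G'$, where the structure theorem for divisible abelian groups splits off the torsion part as a direct summand, I would lift a torsion-free divisible complement back into $G$ one central step at a time, using at each stage that the divisible abelian group $T$ is injective in the category of abelian groups to peel off any torsion contribution, and using $\omega$-stability (DCC on definable subgroups) to guarantee that the process closes up coherently. For (iv), if $G$ has finite Morley rank, then additivity of Morley rank on direct products forces $\mathrm{MR}(T) \leq \mathrm{MR}(G) < \infty$; alternatively, in the finite rank case $T$ is definable because the ascending chain of $n$-torsion subgroups $G[1] \leq G[2] \leq \cdots$ stabilizes under DCC. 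The main obstacle is undoubtedly step (iii): the abstract cohomology class in $H^2(G/T, T)$ need not vanish for arbitrary torsion-free divisible nilpotent quotient and arbitrary divisible torsion kernel, so the splitting really depends on extra structure coming from $\omega$-stability, and extracting this cleanly is the delicate part of the argument.
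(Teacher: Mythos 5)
First, a point of reference: the paper does not prove this statement at all --- it is imported verbatim from Nesin~\cite{N91} (his Lemma~3 and Theorem~3), so there is no in-paper proof to compare against; your attempt has to stand on its own. Your steps (i) and (ii) are correct and complete: the divisibility of $T$ is immediate, and the induction on the nilpotency class together with the observation that $g\mapsto [t,g]$ becomes a homomorphism into $Z(G)$ whose image is divisible of exponent dividing $n$ is a clean, standard argument. Notice, however, that nowhere in (i) or (ii) do you actually use $\omega$-stability; these two facts hold for \emph{every} divisible nilpotent group.

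The genuine gap is step (iii), and you have correctly diagnosed it yourself but not closed it. The splitting $G=T\times N$ --- equivalently, the torsion-freeness of $G'$, which in your plan is \emph{deduced from} the splitting --- is precisely the point where $\omega$-stability must enter, and your sketch (``lift a complement up the upper central series, using injectivity of $T$ and DCC to make it close up'') does not identify how. This is not a removable formality: take $Q=\Q^2$, $T=\Z(p^\infty)$, fix the surjection $\pi\colon \Q\to \Q/\Z_{(p)}\cong \Z(p^\infty)$, and let $G$ be the central extension of $Q$ by $T$ with cocycle $\beta(u,v)=\pi(u_1v_2)$. Then $G$ is a divisible nilpotent group of class $2$ whose torsion subgroup is exactly $T=Z(G)$ and whose derived subgroup is all of $T$; in particular $G'$ is a nontrivial torsion group and $G$ does not split over $T$. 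So the cohomological obstruction you worry about really is nonzero for some divisible nilpotent groups, and the theorem is false without the stability hypothesis. Nesin's route is the opposite of yours: he first proves $G'$ torsion-free (his Lemma~3, the place where $\omega$-stability does the work, via the divisibility of the commutator subgroups $[G,\gamma_i(G)]$ along the lower central series and a chain condition argument), and only then obtains the splitting, for which $G'\cap T=1$ is the essential input. As written, your proposal establishes the easy unconditional parts and leaves the one assertion that actually requires the hypothesis unproved; step (iv) is fine once (iii) is granted (in the finite Morley rank case $T=G[m]$ for some $m$ by stabilization of the ascending chain of definable subgroups $G[n!]$, so $T$ is definable and of finite rank).
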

 
 Finally here are our structure theorems for $\omega$-stable nilpotent groups (of finite Morley rank).  
  
 \begin{thm} \label{wstable:arbitgroup:mainthm}Let $G$ be an $\omega$-stable nilpotent group. Then $G$ is a central product $D\ast C$ where $D$ and $C$ are definable characteristic subgroups of $G$, $D$ is divisible, and $C$ has bounded exponent. Moreover $D$ has a direct decomposition 
 $$D=D_1\times \cdots \times D_n \times D_0$$ where each $D_i$, $i\neq 0$, is a directly indecomposable $k_i$-group  over an algebraically closed field $k_i$ of characteristic zero and $D_0$ is a  divisible abelian group.
 \end{thm}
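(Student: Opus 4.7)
The plan is to combine three previously established results in sequence: Nesin's Theorems~\ref{nesin1} and~\ref{nesin2} together with our Theorem~\ref{w-stable-group:mainthm}. Theorem~\ref{nesin1} applied to $G$ immediately yields the central product $G=D\ast C$ with $D$ divisible and $C$ of bounded exponent, both definable and characteristic, giving the first clause of the statement.

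For the fine decomposition of $D$, I would first note that $D$, being a definable subgroup of the $\omega$-stable group $G$, is itself $\omega$-stable; it is also divisible and nilpotent, so Theorem~\ref{nesin2} supplies a direct decomposition $D=T\times N$ in which $T$ is the central torsion subgroup (hence abelian, and divisible as a direct factor of the divisible group $D$) and $N$ is torsion-free divisible nilpotent. I would then feed $N$ into Theorem~\ref{w-stable-group:mainthm} to obtain $N=N_1\times\cdots\times N_n\times N_0$, where each $N_i$ with $i\geq 1$ is a directly indecomposable $k_i$-group over an algebraically closed field $k_i$ of characteristic zero, and $N_0$ is torsion-free divisible abelian. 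Setting $D_i:=N_i$ for $i=1,\ldots,n$ and $D_0:=T\times N_0$, which is divisible abelian as a product of two divisible abelian groups, produces the desired decomposition of $D$.

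The one genuine obstacle is ensuring that $N$ is $\omega$-stable so that Theorem~\ref{w-stable-group:mainthm} legitimately applies, since Theorem~\ref{nesin2} does not itself assert that $T$ is definable in $D$. The cleanest way to handle this is to show that $T$ is in fact definable in $D$---exploiting the centrality of $T$, the torsion-freeness of $D'=[D,D]$ (part of Theorem~\ref{nesin2}), and the classical structure of divisible $\omega$-stable abelian groups recalled in Section~\ref{classical:sec} applied inside $Z(D)$---after which $N\cong D/T$ is interpretable in $D$ and hence $\omega$-stable. Once that technicality is in place, the rest of the argument is simply a matter of assembling the pieces from Nesin's theorems and the torsion-free case treated in Theorem~\ref{w-stable-group:mainthm}.
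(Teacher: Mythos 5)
Your overall route---Theorem~\ref{nesin1} for the central product $G=D\ast C$, then Theorem~\ref{nesin2} to split $D=T\times N$, then Theorem~\ref{w-stable-group:mainthm} applied to $N$, and finally $D_0:=T\times N_0$---is exactly the paper's proof, which simply declares the statement a direct corollary of those three theorems (and remarks that $D_0$ is a product of an abelian $\Q$-group and a divisible torsion group). You are also right to isolate the one genuinely nontrivial point: Theorem~\ref{w-stable-group:mainthm} needs $N$ to be $\omega$-stable, and neither Theorem~\ref{nesin2} nor general nonsense about direct factors hands you that; in the finite Morley rank setting this is precisely Conjecture~6.10 of Borovik--Nesin (cf.\ Corollary~\ref{BN-conjecture}), so it is not a formality that can be waved through.

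However, the patch you propose---proving that $T$ is definable in $D$---fails. The torsion subgroup of a divisible $\omega$-stable abelian group is in general not definable: take $D=\Q\oplus\Z(p^\infty)$. By the Baur--Monk elimination, every definable subgroup of an abelian group is commensurable with a pp-definable one, and the pp-definable subgroups here ($nD=D$ and the finite groups $D[n]$, and their sums and intersections) are all finite or equal to $D$; since $T=\Z(p^\infty)$ is infinite and proper, it is not definable. Macintyre's Theorem~\ref{Mac:stable:tfa} makes the \emph{divisible} part definable, not the torsion part; $T=\bigcup_n\{x:x^n=1\}$ is an increasing union of definable subgroups, and $\omega$-stable groups satisfy no ascending chain condition (the finite subgroups of $\Z(p^\infty)$ already violate it), so centrality of $T$ and torsion-freeness of $D'$ cannot rescue the claim. (In this particular example the theorem's conclusion is trivially true, but it kills the proposed lemma.) To close the gap you must transfer $\omega$-stability to $N$ by some other device---for instance by running the decomposition machinery of Sections~\ref{bilin:sec} and~\ref{algebras:sec} inside $D$ itself, where $D'$, $D/Z(D)$ and the associated commutator bilinear maps are torsion-free and genuinely interpretable in $D$, and only splitting $T$ off into the abelian factor $D_0$ at the very end---rather than by first isolating $N$ as a first-order structure in its own right.
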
 
\begin{thm}\label{fmr:garbit:mainthm} Let $G$ be a nilpotent group. Then the following statements are equivalent.
\begin{enumerate}
	\item $G$ has finite Morley rank.
	\item $G$ is a central product $D\ast C$ where $D$ and $C$ are definable characteristic subgroups of $G$, $D$ is divisible, $C$ has bounded exponent and $D\cap C$ is finite. Both $G$ and $D$ have finite Morley rank. Moreover $D$ has a direct decomposition 
	$$D=D_1\times \cdots \times D_n \times D_0$$ where each $D_i$, $i\neq 0$, is a directly indecomposable unipotent algebraic group over an algebraically closed field $k_i$ of characteristic zero and $D_0$ is a divisible abelian group of finite Morley rank. 
\end{enumerate}  
 \end{thm}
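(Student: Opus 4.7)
The implication $(2) \Rightarrow (1)$ is essentially built into statement $(2)$, which explicitly asserts that $G$ has finite Morley rank; so nothing needs to be proved there. All the work is in the forward direction $(1) \Rightarrow (2)$, and my plan is to treat the theorem as a packaging of two Nesin decompositions followed by the torsion-free structure result Theorem~\ref{fmr:groups:mainthm}.

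First, I would apply Nesin's Theorem~\ref{nesin1a} to write $G = D \ast C$, a central product of definable characteristic subgroups with $D$ divisible, $C$ of bounded exponent, $D \cap C$ finite, and both $D$ and $C$ of finite Morley rank. This immediately delivers the outer shell of the decomposition; everything that remains is to analyze the divisible nilpotent factor $D$.

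Next, I would apply Nesin's Theorem~\ref{nesin2} to $D$, which is a divisible $\omega$-stable nilpotent group of finite Morley rank. This produces a direct decomposition $D = T \times N$, where $T$ is the (central) torsion part of $D$, a divisible abelian group of finite Morley rank, and $N$ is a torsion-free divisible nilpotent group. By additivity of Morley rank on direct products, $N$ has finite Morley rank as well. Now Theorem~\ref{fmr:groups:mainthm} applies to $N$ and yields a decomposition $N = N_1 \times \cdots \times N_n \times N_0$, where each $N_i$ with $i \geq 1$ is a directly indecomposable unipotent algebraic group over an algebraically closed field $k_i$ of characteristic zero and $N_0$ is a torsion-free divisible abelian group.

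Finally I would simply regroup: set $D_i = N_i$ for $i = 1,\ldots,n$ and $D_0 = N_0 \times T$. Since both $N_0$ and $T$ are divisible abelian of finite Morley rank, so is $D_0$, and the resulting identity $D = D_1 \times \cdots \times D_n \times D_0$ gives the desired decomposition in $(2)$. The genuinely hard work is already done in Theorem~\ref{fmr:groups:mainthm}; the only minor technical point here is ensuring that $N$ has finite Morley rank before invoking that theorem, which is automatic from rank additivity in the direct product $D = T \times N$. Beyond that, the argument is a straightforward reassembly of the Nesin decompositions with the torsion-free classification.
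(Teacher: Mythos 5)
Your skeleton is the same as the paper's: apply Theorem~\ref{nesin1a} to get $G=D\ast C$, apply Theorem~\ref{nesin2} to get $D=T\times N$, feed $N$ into Theorem~\ref{fmr:groups:mainthm}, and absorb $T$ into the abelian factor (your $D_0=N_0\times T$ is exactly the paper's remark that $D_0=T\times Q$). The direction $(2)\Rightarrow(1)$ is likewise unproblematic.

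There is, however, a genuine gap at the one step you dismiss as a ``minor technical point'': the claim that $N$ has finite Morley rank ``by additivity of Morley rank on direct products.'' Rank additivity is a statement about \emph{definable} subgroups, and neither $T$ nor $N$ is definable in $D$ in general. The torsion part $T$ of a divisible group of finite Morley rank is a countable increasing union of the finite definable subgroups $\{x: n!x=0\}$ and is typically not definable --- the paper makes exactly this point at the start of Section~\ref{st:bilin:sec}, where it notes that the decomposition $M_D=M_{\Q}\oplus M_T$ need not be definable --- and $N$ is merely an abstract complement. So $N$ is not interpretable in $D$ as a quotient by $T$, and there is no general theorem that an abstract direct factor of a group of finite Morley rank has finite Morley rank as a pure group. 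Indeed, the assertion that $N$ has finite Morley rank is precisely Conjecture 6.10 of Borovik and Nesin, which the paper records as Corollary~\ref{BN-conjecture} and \emph{derives from} the decomposition in this theorem, not the other way around; if it were automatic it would not have been a conjecture. As written, your step 3 therefore assumes essentially what that corollary is meant to deliver, and Theorem~\ref{fmr:groups:mainthm} cannot be invoked on $N$ until the finiteness of its Morley rank is established by some independent argument (or the decomposition of $D$ is obtained by a route that does not pass through $N$ first).
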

   
From our results we can also provide a positive solution to Conjecture 6.10  of A. Borovik and A. Nesin~\cite{BN94}.

\begin{cor}[]\label{BN-conjecture} The subgroup $N$ in Theorem~\ref{nesin2} has finite  Morley rank, provided the group $G$ has finite Morley rank.\end{cor}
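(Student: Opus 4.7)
The plan is to apply the main structure theorem (Theorem~\ref{fmr:garbit:mainthm}) to the divisible nilpotent group $G$ of finite Morley rank, then read $N$ off as a direct factor up to isomorphism.

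My first step would be to note that, because $G$ is divisible, the central product decomposition $G = D \ast C$ in Theorem~\ref{fmr:garbit:mainthm} simplifies: the bounded-exponent factor $C$ is absorbed into $D$. Indeed, given $c \in C$ with $c^n = 1$, divisibility of $G$ supplies $g \in G$ with $g^n = c$; writing $g = dc'$ with $d \in D$, $c' \in C$, the relation $[d, c'] = 1$ forces $c = g^n = d^n (c')^n = d^n \in D$. Hence $G = D$, and Theorem~\ref{fmr:garbit:mainthm} delivers a genuine direct decomposition
\[
G = D_1 \times \cdots \times D_m \times D_0,
\]
where each $D_i$ ($1 \leq i \leq m$) is a directly indecomposable unipotent algebraic group over an algebraically closed field of characteristic zero and $D_0$ is a divisible abelian group of finite Morley rank.

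Next, since unipotent algebraic groups in characteristic zero are torsion-free, the torsion subgroup $T$ of $G$ coincides with the torsion subgroup of $D_0$. The classical description of divisible abelian groups of finite Morley rank recalled in Section~\ref{classical:sec} provides a splitting $D_0 = Q \oplus T$ with $Q \cong \mathbb{Q}^k$ for some finite $k$, so $Q$ has finite Morley rank. Substituting back yields $G = D_1 \times \cdots \times D_m \times Q \times T$, and I would identify $N' := D_1 \times \cdots \times D_m \times Q$ as a torsion-free divisible nilpotent complement of $T$. Since each $D_i$ has finite Morley rank (being unipotent algebraic over an algebraically closed field) and so does $Q$, the direct product $N'$ has finite Morley rank.

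Finally, any subgroup $N$ as in Theorem~\ref{nesin2} is isomorphic to $G/T$, and hence to $N'$; since the Morley rank of a group is an isomorphism invariant, $N$ has finite Morley rank as well. The only step with real content is the absorption $C \leq D$ in the divisible case; everything else is bookkeeping inside Theorem~\ref{fmr:garbit:mainthm} and the classical classification of divisible abelian groups of finite Morley rank.
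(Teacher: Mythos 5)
Your argument is essentially the paper's proof, only spelled out in more detail: both identify $N$, up to isomorphism, with $D_1\times\cdots\times D_m\times Q$ via the decomposition of Theorem~\ref{fmr:garbit:mainthm} and conclude because each factor has finite Morley rank, and your added steps (absorbing $C$ into $D$ using divisibility, and passing through $N\cong G/T$ since a non-definable complement can only be controlled up to isomorphism) are correct and fill in what the paper leaves implicit. One small inaccuracy: a divisible abelian group of finite Morley rank need not have finite torsion-free rank --- only the $p$-ranks are finite, as in Theorem~\ref{Mac:stable:tfa} and the decomposition $(\bigoplus_I\mathbb{Q})\oplus\bigoplus_p(\bigoplus_{I_p}\mathbb{Z}(p^\infty))$ used in Section~\ref{fmr-alg:sec} --- so $Q$ is in general an arbitrary $\mathbb{Q}$-vector space rather than $\mathbb{Q}^k$ with $k$ finite; this is harmless, since any $\mathbb{Q}$-vector space has Morley rank at most $1$, which is how the paper itself justifies the finiteness of the rank of $Q$.
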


 We would like to remark here that the structure theorems for arbitrary $\omega$-stable bilinear maps and rings are similar to Theorem~\ref{wstable:arbitgroup:mainthm} and Theorem~\ref{fmr:garbit:mainthm}. In those cases we shall also give a better description of the bounded exponent component (See Proposition~\ref{boundedstablebilin:prop} and Proposition~\ref{wstable:boundedalg:prop}).
 
  Finally we would like to mention an example due to A. Baudisch. In~\cite{baud}, he constructs an uncountable categorical nilpotent group of exponent $p$, for a prime $p>2$, which does not interpret any infinite fields. In particular it shows that Theorems~\ref{w-stable-group:mainthm} and \ref{fmr:groups:mainthm} can not be extended to the bounded exponent case. 

\section{Some classical results on $\omega$-stable algebraic structures.}
\label{classical:sec}

Here we collect a few classical results which will be frequently used in the sequel.

\begin{thm}[A. Macintyre~\cite{mac1}]\label{Mac:stable:tfa} An abelian group $M$ is $\omega$-stable if and only if $M= M_D\oplus M_B$ where $M_D$ is divisible and $M_B$ is of bounded order. $M_D$ is characteristic and absolutely definable in $M$. The subgroup $M_B$ can be replaced by a characteristic absolutely definable subgroup $M_C$, but then we only have $M=M_D + M_C$. If $M$ has finite Morley rank then $|M_D\cap M_C|<\infty$.\end{thm}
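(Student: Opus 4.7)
The plan is to treat the two directions separately, with the forward direction being the substantive one. Assuming $M$ is $\omega$-stable, I would apply the descending chain condition on definable subgroups to the $\emptyset$-definable chain $M\supseteq 2!M\supseteq 3!M\supseteq\cdots$ to find some $n$ with $n!M=(n+j)!M$ for all $j\geq 0$. Setting $M_D:=n!M$, a short calculation using $(n+j)!=(n+j)\cdot(n+j-1)!$ shows that $M_D=(n+j)\cdot M_D$ for every $j\geq 1$; for a general $k\geq 1$, pick $j$ large enough that $k\mid(n+j)$ and conclude $kM_D=M_D$, so $M_D$ is divisible. Since divisible subgroups are always direct summands in abelian groups, one obtains a complement $M_B$ with $M=M_D\oplus M_B$, and because $n!M\subseteq M_D$ the quotient $M/M_D\cong M_B$ has exponent dividing $n!$. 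Note $M_D$ is characteristic and absolutely definable by the very formula defining $n!M$.

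For the characteristic variant, I would take $M_C:=\{x\in M:n!x=0\}$, which is $\emptyset$-definable and characteristic by construction. For any $x\in M$, one has $n!x\in n!M=M_D$, and divisibility of $M_D$ produces $d\in M_D$ with $n!d=n!x$; then $x-d\in M_C$, giving $x=d+(x-d)\in M_D+M_C$. The sum generally fails to be direct because $M_D\cap M_C=M_D[n!]$ can be nontrivial. For the converse ($\Leftarrow$) direction, I would invoke the structure theorem for divisible abelian groups (direct sums of copies of $\mathbb{Q}$ and Pr\"ufer groups $\mathbb{Z}(p^\infty)$) and Pr\"ufer's theorem for bounded exponent abelian groups (direct sums of cyclic $p$-groups of uniformly bounded order), together with the standard fact, verifiable via Szmielew invariants, that such direct sums produce totally transcendental abelian groups.

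For the finite Morley rank clause, $M_D$ is a definable subgroup of $M$ and hence itself of finite Morley rank, while $M_D[n!]$ is a bounded-exponent definable subgroup thereof. Any infinite bounded-exponent abelian group contains an infinite $\mathbb{F}_p$-vector space for some prime $p\mid n!$, and infinite $\mathbb{F}_p$-vector spaces have infinite Morley rank, so $M_D\cap M_C=M_D[n!]$ must be finite. The only real technical step is the divisibility verification for $M_D=n!M$ once the chain stabilizes; the remainder is bookkeeping combining the descending chain condition on definable subgroups with classical abelian group theory.
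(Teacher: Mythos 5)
The paper offers no proof of this statement --- it is quoted as a classical theorem of Macintyre --- so your proposal can only be judged on its own merits. The forward direction is correct and is the standard argument: the chain $M\supseteq 2!M\supseteq\cdots$ stabilizes by the descending chain condition on definable subgroups of an $\omega$-stable group, $M_D=n!M$ is divisible and hence a direct summand with bounded complement, and $M_C=M[n!]$ gives the definable (non-direct) variant. The converse, while only sketched, appeals to the standard total transcendence criterion for abelian groups and is acceptable.

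There is, however, a genuine error in your treatment of the finite Morley rank clause. You assert that ``infinite $\mathbb{F}_p$-vector spaces have infinite Morley rank.'' This is false: an infinite elementary abelian $p$-group is strongly minimal, of Morley rank $1$ (indeed $(\Z/p\Z)^{(\omega)}$ is a bounded-exponent $\omega$-stable group of finite Morley rank, consistent with the very theorem you are proving). If your cited fact were true, no group of finite Morley rank could contain an infinite definable bounded-exponent abelian subgroup, which is absurd. The conclusion $|M_D\cap M_C|<\infty$ is nonetheless correct, but the argument must use the divisibility of $M_D$: if $M_D[p]$ were infinite for some prime $p\mid n!$, then the ascending chain $M_D[p]<M_D[p^2]<\cdots$ consists of definable subgroups with $M_D[p^{k+1}]/M_D[p^k]\cong M_D[p]$ infinite for every $k$ (surjectivity of $x\mapsto p^k x$ onto $M_D[p]$ uses that $M_D$ is divisible), whence $RM(M_D[p^{k}])\geq k$ by the Lascar inequality $RM(G)\geq RM(H)+RM(G/H)$, contradicting finiteness of the Morley rank of $M$. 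Since $M_D\cap M_C=M_D[n!]=\bigoplus_{p\mid n!}M_D[p^{v_p(n!)}]$ and each $M_D[p^j]$ is finite once $M_D[p]$ is, the claim follows. With this repair the proof is complete.
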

\begin{thm}[A. Macintyre~\cite{mac2}] An infinite $\omega$-stable integral domain with unit is an algebraically closed field.\end{thm}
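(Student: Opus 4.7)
The plan is to establish the theorem in two phases: first that the integral domain $R$ is a field, then that this field is algebraically closed.

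For the first phase, take any nonzero $a\in R$ and consider the descending chain of principal ideals $R\supseteq aR\supseteq a^2R\supseteq \cdots$. Each $a^nR$ is a definable subgroup of the additive group $R^+$, uniformly in $n$ with parameter $a$, via $\exists y\,(x=a^n y)$. A standard consequence of $\omega$-stability is that any descending chain of definable subgroups of a group must stabilize, since Morley rank is an ordinal invariant that strictly decreases under proper inclusion (or, in the uniformly definable case, by a direct counting of types). Hence $a^nR=a^{n+1}R$ for some $n$, giving $a^n=a^{n+1}b$ for some $b\in R$; cancelling in the domain yields $ab=1$, so $a$ is a unit.

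For the second phase, write $K=R$ and suppose for contradiction that $K$ admits a proper finite Galois extension. By the Galois correspondence it suffices to exclude cyclic extensions of prime degree $p$. If $p\neq \text{char}(K)$, I would pass to $K(\zeta_p)$, which remains $\omega$-stable as it is interpretable in $K$, and apply Kummer theory: such an extension corresponds to $(K^*)^p$ being a proper subgroup of $K^*$ of finite index. The chain $\{(K^*)^{p^n}\}_{n\geq 1}$ of definable subgroups of the ($\omega$-stable) multiplicative group must stabilize, and together with the finiteness and nontriviality of the Kummer quotients $K^*/(K^*)^{p^n}$ this gives a contradiction. If $p=\text{char}(K)$, Artin--Schreier theory reduces matters to the surjectivity of the definable additive endomorphism $\wp(x)=x^p-x$ of $K^+$; the chain $\{\wp^n(K^+)\}$ of definable subgroups stabilizes, and combined with the fact that an $\omega$-stable group has no properly descending infinite chain of definable finite-index subgroups (which would arise from cosets of $\wp(K^+)$), one again extracts a contradiction.

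The main obstacle lies in the second phase: $\omega$-stability only supplies \emph{stabilization} of a descending chain of definable subgroups, not that such a chain must eventually reach the whole group. Converting stabilization into the failure of Kummer or Artin--Schreier extensions therefore requires pairing the chain condition with external input --- the finiteness and nontriviality of the Kummer (respectively Artin--Schreier) quotients, and the absence of proper definable finite-index subgroups of $K^+$ or $K^*$ --- and verifying that each reduction (passage to $K(\zeta_p)$, restriction to connected components, tracking of parameters) preserves $\omega$-stability.
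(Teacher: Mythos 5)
The paper gives no proof of this statement; it is quoted as a classical citation to Macintyre, so there is no in-paper argument to compare against and your proposal must stand on its own. Your first phase does: the chain $aR\supseteq a^2R\supseteq\cdots$ of uniformly definable subgroups of $R^+$ stabilizes by the descending chain condition in an $\omega$-stable group, and cancellation in the domain turns $a^nR=a^{n+1}R$ into $ab=1$. This is the standard argument and is complete.

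The second phase has a genuine gap, which your own closing paragraph essentially concedes. Stabilization of the chains $(K^*)^{q^n}$ or $\wp^n(K^+)$ cannot do the work you assign to it: if $H=(K^*)^{q^N}=(K^*)^{q^{N+1}}$, then $K^*/H$ is a finite abelian group, and for any finite abelian group $A$ the chain $A^{q^n}$ stabilizes automatically at the $q'$-part of $A$; so stabilization is vacuous and says nothing about whether $q$ divides $[K^*:(K^*)^q]$, which is exactly the question. The two facts that actually drive Macintyre's proof are: (i) a definable endomorphism of $K^+$ or $K^*$ with finite kernel has image of full Morley rank, hence of finite index --- this is how one gets $[K^*:(K^*)^q]<\infty$ and $[K^+:\wp(K^+)]<\infty$ in the first place; and (ii) $K^+$ and $K^*$ are connected, i.e.\ have no proper definable subgroup of finite index. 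For $K^+$ this is easy: the connected component is invariant under the definable automorphisms $x\mapsto ax$ ($a\neq 0$), hence is a nonzero definable ideal, hence all of $K$; this also yields surjectivity of $\wp$ and of Frobenius (so $K$ is perfect, which you must check before invoking Galois theory in positive characteristic). For $K^*$, connectedness is the hard core of the theorem and cannot be relegated to ``external input'' to be ``paired'' with a chain condition; it needs a separate substantive argument (Macintyre's original one, or the treatment in Borovik--Nesin or Poizat). Your reductions --- to prime-degree cyclic extensions, the passage to $K(\zeta_q)$, preservation of $\omega$-stability under interpretation --- are all sound, but without (i) and (ii) the proof does not close.
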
 
 \begin{thm}[G. I. Cherlin and J. Reineke~\cite{CR}]\label{CR:stable} Let $R$ be a commutative associative $\omega$-stable unitary ring. Then $R$ is decomposed into a finite direct product $R=R_1\times\cdots \times R_n$ of local rings $R_i$, where the maximal ideal $J_i$ of $R_i$ is nilpotent and the field of residues $R_i/J_i$ is either algebraically closed or finite.\end{thm}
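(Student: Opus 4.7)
The plan is to leverage the one structural consequence of $\omega$-stability we need here, namely the descending chain condition on definable additive subgroups of $R$, and hence on its definable ideals. I would proceed in three stages: extract idempotents from DCC on principal ideals to obtain a unit-vs-nilpotent dichotomy, assemble these into a finite decomposition into indecomposable local components, and identify the residue field of each component using Macintyre's theorem.

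First, for any $a\in R$, the descending chain $aR\supseteq a^2R\supseteq\cdots$ of definable ideals must stabilize, so $a^n=a^{n+1}b$ for some $n$ and $b$. An induction yields $a^n=a^{n+k}b^k$ for all $k$, and in particular $a^n=a^{2n}b^n$. Setting $e:=a^nb^n$, a direct check shows $e^2=a^{2n}b^{2n}=(a^{2n}b^n)\,b^n=a^nb^n=e$. Three cases arise: $e=1$ (so $a^nb^n=1$ makes $a^n$, hence $a$, a unit), $e=0$ (so $a^n=a^n e=0$ and $a$ is nilpotent), or $e$ is a nontrivial idempotent, yielding a decomposition $R=eR\times(1-e)R$. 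Iterating the splitting on each factor produces successively finer decompositions; since each direct summand is a definable ideal, DCC on definable ideals forces termination, giving $R=R_1\times\cdots\times R_n$ with each $R_i$ having only the trivial idempotents. In each such $R_i$ the trichotomy collapses to ``unit or nilpotent'', so the non-units form an ideal $J_i$ and $R_i$ is local with maximal ideal $J_i$ equal to its nilradical.

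Next I would show that $J_i$ is definable with a uniform nilpotence exponent, by a compactness argument: if no single $N$ satisfied $a^N=0$ for every non-unit $a\in R_i$, the partial type $\{\neg\exists y\,(xy=1)\}\cup\{x^n\neq 0:n\ge 1\}$ would be finitely satisfiable in $R_i$ and hence realized in an elementary extension, producing a non-unit, non-nilpotent element and contradicting the first stage applied to that extension (which is still $\omega$-stable and still has no nontrivial idempotents). Hence $J_i=\{a\in R_i:a^N=0\}$ is first-order definable. Nilpotence of $J_i$ as an ideal then follows from DCC applied to the descending chain $J_i\supseteq J_i^2\supseteq\cdots$, invoking the standard fact that in an $\omega$-stable group the subgroup generated by a definable set is itself definable, so each power $J_i^k$ is a definable ideal.

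Finally, the quotient $R_i/J_i$ is a definable quotient of an $\omega$-stable ring, hence itself $\omega$-stable, and the unit-or-nilpotent dichotomy makes it a field. Macintyre's theorem recalled earlier then forces $R_i/J_i$ to be algebraically closed when infinite, and it is finite otherwise. The main obstacle I anticipate is the second stage: passing from an element-wise nilpotence bound on non-units to nilpotence of the ideal itself is precisely where the $\omega$-stability combinatorics must be used in a nontrivial way, and establishing the definability of the powers $J_i^k$ so that DCC applies to them is the technical crux that cannot be handled by purely ring-theoretic means.
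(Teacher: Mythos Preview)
The paper does not supply a proof of this theorem: it is quoted from Cherlin--Reineke as one of the classical inputs in Section~\ref{classical:sec}, with no argument given. So there is no in-paper proof to compare against; what follows is an assessment of your outline on its own merits.

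Your first and third stages are fine and follow the standard route: idempotent extraction from DCC on principal ideals, the unit/nilpotent dichotomy in an indecomposable factor, and Macintyre's theorem for the residue fields. The compactness argument giving a uniform nilpotence exponent for elements of $J_i$ (hence definability of $J_i$) is also correct.

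The step to flag is the passage from elementwise nilpotence to nilpotence of the ideal $J_i$. You propose to apply DCC to the chain $J_i\supseteq J_i^2\supseteq\cdots$ after invoking ``the standard fact that in an $\omega$-stable group the subgroup generated by a definable set is itself definable''. That statement is standard for groups of \emph{finite} Morley rank (via Zilber's indecomposability theorem), but it is not something one can simply quote for arbitrary $\omega$-stable groups; at minimum it needs its own argument, and you rightly identify this as the crux. A cleaner route that stays entirely within DCC on \emph{principal} ideals and sidesteps the issue: for any sequence $a_1,a_2,\ldots\in J_i$ the chain $a_1R_i\supseteq a_1a_2R_i\supseteq\cdots$ of definable ideals stabilizes, say $a_1\cdots a_nR_i=a_1\cdots a_{n+1}R_i$, whence $a_1\cdots a_n(1-a_{n+1}c)=0$ for some $c$; since $a_{n+1}c\in J_i$ the factor $1-a_{n+1}c$ is a unit, so $a_1\cdots a_n=0$. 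Now rerun your compactness argument on products rather than powers: if no uniform $N$ kills all $n$-fold products from $J_i$, the type $\{x_k\in J_i:k\geq 1\}\cup\{x_1\cdots x_n\neq 0:n\geq 1\}$ is finitely satisfiable and hence realized in an $\omega$-stable elementary extension, contradicting the previous sentence applied there. This gives $J_i^N=0$ directly, without ever needing the powers $J_i^k$ to be definable.
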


\begin{thm}[G. I. Cherlin and J. Reineke~\cite{CR}] \label{CR:fmr} Let $R$ be a commutative associative unitary ring. 
\begin{enumerate}
\item $R$ is of finite Morley rank if and only if it can be decomposed into a finite direct product $R=R_1\times\cdots \times R_n$ of local rings $R_i$, where each $R_i$ is definable in $R$ and of finite Morley rank, the maximal ideal $J_i$ of $R_i$ is nilpotent, and the field of residues $R_i/J_i$ is algebraically closed or finite. 
\item If $R$ is local and of finite Morley rank with algebraically closed field of residues then $R$ is Noetherian.   
\end{enumerate}
\end{thm}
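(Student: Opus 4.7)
The plan is to bootstrap from the $\omega$-stable analogue, Theorem~\ref{CR:stable}, using the fact that finite Morley rank implies $\omega$-stability. For the forward direction of (1), I would apply Theorem~\ref{CR:stable} to obtain a decomposition $R = R_1 \times \cdots \times R_n$ into local rings whose maximal ideals are nilpotent and whose residue fields are algebraically closed or finite. The decomposition is induced by a complete system of orthogonal idempotents $e_1, \dots, e_n \in R$ (with $e_1 + \cdots + e_n = 1$ and $e_i e_j = 0$ for $i \neq j$), so each factor $R_i = e_i R = \{x \in R : e_i x = x\}$ is definable with parameters $e_1, \dots, e_n$. As a definable subring of a finite Morley rank structure, $R_i$ itself has finite Morley rank.

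The reverse direction of (1) is immediate from additivity of Morley rank on finite direct products of definable rings: if $R \cong R_1 \times \cdots \times R_n$ with each $R_i$ of finite Morley rank, then $R$ is interpretable in the disjoint union of the $R_i$ and $\mathrm{MR}(R) = \sum_i \mathrm{MR}(R_i) < \infty$.

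For part (2), let $R$ be local of finite Morley rank with maximal ideal $J$ and residue field $k = R/J$ algebraically closed. By (1), $J$ is nilpotent; fix the least $N$ with $J^N = 0$ and consider the filtration $R \supseteq J \supseteq J^2 \supseteq \cdots \supseteq J^N = 0$. Each quotient $V_i = J^i/J^{i+1}$ is a definable $k$-vector space of finite Morley rank. Since $k$ is an infinite algebraically closed field it has Morley rank $1$, and Morley rank is additive on definable $k$-vector spaces, giving $\mathrm{MR}(V_i) = \dim_k V_i$. Hence each $V_i$ is finite-dimensional over $k$, so $R$ admits a finite composition series as an $R$-module whose factors are all isomorphic to the simple module $k$. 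Thus $R$ has finite length, which yields Noetherianness (and, incidentally, Artinianness).

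The main subtlety is ensuring definability of the local components in (1): Theorem~\ref{CR:stable} guarantees their existence as abstract ring factors, but definability requires recognizing that the decomposition arises from actual idempotents in $R$, which is automatic because $R$ is unital and the product decomposition is finite. Part (2) then reduces to the standard additivity computation $\mathrm{MR}(V) = \dim_k(V) \cdot \mathrm{MR}(k)$ for definable $k$-vector spaces $V$, from which Noetherianness is formal via the Jordan--H\"older filtration above.
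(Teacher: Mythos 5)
The paper does not prove Theorem~\ref{CR:fmr} at all: it is quoted as a classical result of Cherlin and Reineke with a citation to \cite{CR}, so there is no in-paper argument to compare yours against. That said, your reconstruction is correct and matches how the authors themselves use the result elsewhere: your argument for part (2) is precisely the dimension-versus-rank bound that the paper later isolates as Lemma~\ref{local-alg:lem} and describes as ``a restatement of Theorem 3.2 from \cite{CR}'', namely that a definable injection $k^n\to V$ sending $(a_1,\dots,a_n)$ to $\sum a_iv_i$ forces $\dim_k V\leq \mathrm{RM}(V)$ when $k$ is infinite. Two small points of precision. First, in part (2) you only need (and can only in general assert) the inequality $\dim_k V_i\leq \mathrm{RM}(V_i)$, which follows from superadditivity of Morley rank on products; the equality $\mathrm{RM}(V_i)=\dim_k V_i$ presupposes additivity, which is not automatic, and the phrase ``since $k$ is algebraically closed it has Morley rank $1$'' should be weakened to $\mathrm{RM}(k)\geq 1$ because $k$ is only \emph{interpretable} in $R$ and may carry extra induced structure. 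Neither issue affects the conclusion. Second, your observation that the factors in the Cherlin--Reineke decomposition are definable because a finite unital product decomposition is witnessed by a complete system of orthogonal idempotents is exactly the right way to upgrade the abstract decomposition of Theorem~\ref{CR:stable} to a definable one, and the deduction that each $e_iR$ then has finite Morley rank, as well as the converse via interpretability of $R$ in the product of the $R_i$, is sound.
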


\section{Bilinear maps}\label{bilin:sec}
Here we assume that $R$ is a commutative associative unitary ring and that $M_1$, $M_2$ and $N$ are $R$-modules. For the most part we restrict ourselves to those $R$-bilinear maps $f:M_1\times M_2 \to N$ where $M_1=M_2$. For our purposes this case is general enough, even though in all we do the general case $M_1\neq M_2$ reduces to the special case (See \cite{M90b}, Section 2.1)
 
 So Assume $f:M \times M \to N$ is an $R$-bilinear map. The bilinear map $f$ is called \emph{full} if the corresponding canonical $R$-homomorphism $\bar{f}: M\otimes_R M \to N$ is surjective.
 
 Assume $M_1$ and $M_2$ are subsets of $M$, then by $\langle f(M_1,M_2)\rangle$ we denote the $R$-submodule of $N$ generated by $f(x,y)$, $x\in M_1$, $y\in M_2$. In particular by $im(f)$ we mean $\langle f(M,M) \rangle$. 
 
The \emph{two sided kernel of $f$}, $C(f)$, is the $R$-submodule of $M$ defined by
 $C(f)=\{x\in M: f(x,M)=f(M,x)=0\}$. A mapping $f$ is called \emph{identically degenerate} if $f(x,y)=0$ for all $x,y\in M$. The map $f$ is called \emph{non-degenerate} if $C(f)=0$. To any bilinear map $f$ one can associate a full non-degenerate map 
 $$f^F:M/C(f)\times M/C(f)\to im(f)$$ in the obvious way. We call $f^F$ \emph{the foundation of $f$}. In case there is an $R$-submodule $M_F$ of $M$ such that $M=M_F\oplus C(f)$ we call the full non-degenerate map $f^F:M_F\times M_F\to im(f)$ induced by $f$ \emph{a foundation of $f$} for obvious reasons.  

\subsection{Maximal scalar ring $P(f)$ of a bilinear map $f$}\label{bilin-scalar:sec}
 In this section we define a ``canonical'' scalar ring $P(f)$ for a bilinear map $f$. The construction is taken from~\cite{M90a, M90b} and we refer the reader to those references for the details left out here. To follow the mentioned references we call it  \emph{largest scalar ring of $f$}. Even though it is the largest scalar ring with respect to which $f$ remains bilinear (assuming certain conditions on $f$) in a proper sense we do not use the maximality in this paper. As it was mentioned in the introduction (and will be observed in detail below) this commutative object reflects many of the logical and algebraic aspects of the highly non-commutative or non-symmetric underlying object. 
 
 All the modules are considered to be faithful. Scalar rings are always commutative associative with a unit.

Now assume $f:M\times M \to N$ is a non-degenerate
full $R$-bilinear mapping. We need to introduce a few auxiliary objects first.

An $R$-endomorphism $A$ of the $R$-module $M$ is called \textit{symmetric} if $$f(Ax,y)=f(x,Ay)$$ for every $x,y
\in M$. The set of all these endomorphisms is denoted by $Sym_f(M)$.
Set $$Z(f)=\{B\in Sym_f(M):A\circ B= B\circ
A,\quad \forall A\in Sym_f(M)\}.$$ Then $Z(f)$ is non-empty since the unit 1 belongs to $Z(f)$ and it is actually an $R$-subalgebra of $R$-endomorphisms $End_R(M)$ of $M$. For each $n$, let $Z_n(f)$
be the set of all endomorphisms $A$ in $Z(f)$ that satisfy the formula
\begin{equation}\label{Z(f)}\begin{split}
S_n(A)\Leftrightarrow & \forall \bar{x},\bar{y},\bar{u},\bar{v}
(\sum_{i=1}^nf(x_i,y_i)=\sum_{i=1}^{n}f(u_i,v_i) \rightarrow \\
& \sum_{i=1}^nf(Ax_i,y_i)=\sum_{i=1}^{n}f(Au_i,v_i)).
\end{split}\end{equation}
i.e.
\begin{equation}
Z_n(f)=\{A\in Z(f): S_n(A)\}.\label{zn}
\end{equation}
Each $Z_n(f)$ is also  an $R$-subalgebra of $Z(f)$. Now define $$P(f)=_{\text{def}}\cap_{i=1}^\infty Z_n(f).$$ The identity mapping is in
every $Z_n(f)$ so $P(f)$ is not empty. 

Note that since the mapping $f$ is full, for every $x\in N$ there are $x_i$ and
$y_i$ in $M$ such that $x=\sum_{i=1}^nf(x_i,y_i)$ for some $n$. The $P(f)$-module $M$ is faithful by construction. Now we can define the action
of $P(f)$ on $N$ by setting $Ax=\sum_{i=1 }^nf(Ax_i,y_i)$. The action is clearly well-defined since $A$
satisfies all the $S_n(A)$ and makes $N$ into an $P(f)$-module. Moreover for any $x,y\in M$ and $A\in P(f)$ we have $$f(Ax,y)=f(x,Ay)=Af(x,y),$$
that is, $f$ is $P(f)$-bilinear. Note that $P(f)$ is independent of the ring $R$.
\subsection{Decomposing bilinear maps}

\begin{defn}Given $f:M\times M \to N$ we say that $f$ is decomposed to a direct sum of $R$-bilinear mappings $f_1$ and $f_2$ and write $f=f_1\oplus f_2$ if there are $R$-submodules $M_1, M_2, M_3$ and $M_4$ of $M$ and $N_1$ and $N_2$ of $N$ such that
\begin{enumerate}
\item $M=M_1\oplus M_2=M_3\oplus M_4$ and $N=N_1\oplus N_2$,
\item $f_1:M_1\times M_3\to N_1$ and  $f_2: M_2\times M_4 \to N_2$ are $R$-bilinear mappings,
\item $f(x,y)=f_1(x_1,y_1)+f_2(x_2,y_2)$ for all $x,y\in M$, where $x=x_1+x_2$, $y=y_1+y_2$, $x_1\in M_1$, $x_2\in M_2$, $y_1\in M_3$ and $y_2\in M_4$.
\end{enumerate} 
If condition (1.) is replaced with
\begin{enumerate}
\item[1.$^*$] $M=M_1+M_2=M_3+M_4$ and $N=N_1+N_2$, 
\end{enumerate}
but still (2.) and (3.) hold, then we write $f=f_1+f_2$. 
\end{defn} 

\begin{lem}[\cite{M90a}, Proposition 3.1] Given a non-degenerate full bilinear mapping $f:M\times M \to N$, $f$ admits a direct decomposition into directly indecomposable mappings $f=f_1\oplus \cdots \oplus f_n$ if and only if $P(f)=P_1\oplus \cdots \oplus P_n$, where each $P_i$ is a directly indecomposable subring of $P(f)$ and $P_i=P(f_i)$.\end{lem}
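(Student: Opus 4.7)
My plan is to show that decompositions of $f$ correspond to complete systems of orthogonal idempotents in $P(f)$, so that the algebraic decomposition of $P(f)$ simultaneously encodes the decomposition of $f$ and records the largest scalar ring of each summand.

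For the forward direction, suppose $f = f_1 \oplus \cdots \oplus f_n$ with $M = M_1 \oplus \cdots \oplus M_n = M'_1 \oplus \cdots \oplus M'_n$ (the two decompositions coming from the definition) and $N = N_1 \oplus \cdots \oplus N_n$. First I would extend each $A_i \in P(f_i)$ to an $R$-endomorphism $\tilde{A}_i$ of $M$ by acting as $A_i$ on $M_i$ (resp.\ $M'_i$) and as zero on the other summands, and verify that $\tilde{A}_i \in P(f)$. Symmetry $f(\tilde{A}_i x, y) = f(x, \tilde{A}_i y)$ follows by projecting to the coordinate $N_i$ and applying symmetry of $A_i \in Sym_{f_i}(M_i)$; commutativity with all of $Sym_f(M)$ follows because every symmetric endomorphism of $f$ itself preserves the decomposition (this uses that $f_i$ is non-degenerate on $M_i$ and identically zero across summands); the $S_n$ axiom then transports from $f_i$ to $f$ by decomposing any identity $\sum f(x_j,y_j) = \sum f(u_j,v_j)$ into its $N_i$-coordinates. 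This furnishes an injective ring map $P(f_1) \oplus \cdots \oplus P(f_n) \hookrightarrow P(f)$. For surjectivity, I would take any $A \in P(f)$ and show it preserves the decomposition: the projection idempotents $\pi_i$ onto $M_i$ (constructed as above from the identity of each $P(f_i)$) lie in $P(f)$ by the previous argument, and since $P(f)$ is commutative, $A$ commutes with each $\pi_i$, hence stabilizes every $M_i$. Its restriction to $M_i$ visibly satisfies the defining conditions of $P(f_i)$, giving the reverse inclusion.

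For the reverse direction, suppose $P(f) = P_1 \oplus \cdots \oplus P_n$ with orthogonal central idempotents $e_1, \ldots, e_n$ summing to $1$. Since $f$ is $P(f)$-bilinear and full, $M$ and $N$ are $P(f)$-modules, and I set $M_i := e_i M$, $N_i := e_i N$, yielding internal direct sum decompositions. $P(f)$-bilinearity gives
\[
f(e_i x,\, e_j y) \;=\; e_i e_j\, f(x,y) \;=\; \delta_{ij}\, e_i f(x,y),
\]
so $f$ restricts to an $R$-bilinear map $f_i : M_i \times M_i \to N_i$ and $f = f_1 \oplus \cdots \oplus f_n$. Each $f_i$ is full and non-degenerate because the corresponding idempotent acts as the identity on $M_i$ and $N_i$. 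The indecomposability equivalence follows: any further direct decomposition of $f_i$ would, by the forward direction just proved, split $P(f_i) = P_i$ as a ring, contradicting directly indecomposability of $P_i$; conversely, a nontrivial idempotent of $P_i$ would split $f_i$ by the argument of this paragraph.

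The main obstacle I anticipate is the identification $P_i = P(f_i)$ rather than a mere inclusion $P_i \subseteq P(f_i)$. The inclusion is immediate because each element of $P_i \subseteq P(f)$ acts trivially off $M_i$ and hence its restriction lies in $P(f_i)$. The reverse inclusion requires showing that every $B \in P(f_i)$, extended by zero to $\tilde{B} \in End_R(M)$, lies in $P(f)$ and moreover in the summand $P_i$; this is exactly the extension-and-verification step in the forward direction, now run in the opposite logical order, and it is what ties the two implications together. The rest of the argument is formal manipulation of the defining axioms \eqref{Z(f)} and of the idempotent decomposition.
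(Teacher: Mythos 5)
The paper itself gives no proof of this lemma --- it is quoted verbatim from \cite{M90a}, Proposition 3.1 --- so there is nothing internal to compare your argument against; I can only assess it on its own terms, and on those terms it is essentially correct and is the standard idempotent-correspondence argument one would expect the source to use. The two load-bearing points are exactly the ones you isolate: that every $A\in Sym_f(M)$ preserves the summands $M_i$ (which you correctly derive from non-degeneracy of each $f_i$ together with $N_i\cap N_j=0$, forcing $f_j((Ax)_j,y)=f_j(y,(Ax)_j)=0$ for $x\in M_i$, $j\neq i$), and that extension-by-zero and restriction are mutually inverse between $P(f_i)$ and $e_iP(f)$, which is what upgrades $P_i\subseteq P(f_i)$ to equality. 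Two small points you should make explicit if you write this up: first, each $f_i$ in a decomposition of a full non-degenerate $f$ is itself full and non-degenerate (fullness because $\langle f_i(M_i,M_i)\rangle\subseteq N_i$ and these submodules are independent with sum $N$), since otherwise $P(f_i)$ is not even defined by the paper's construction; second, the paper's definition of $f=f_1\oplus f_2$ allows two a priori different decompositions $M=M_1\oplus M_2=M_3\oplus M_4$ of the domain, and for $P(f_i)$ to make sense you must take (or justify) the convention that these coincide, which you gesture at with ``$M_i$ (resp.\ $M_i'$)'' but do not actually resolve. Neither point affects the structure of the argument.
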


We will need the following lemma for further results. 
\begin{lem}\label{bilin:decomp:lem} Assume $f:M\times M \to N$ is an $R$-bilinear mapping where $R$ is a commutative associative unitary ring. Assume $M$ and $N$ split as $R$-modules over $C(f)$ and $im(f)$, respectively. Let $N^0$ denote an $R$-complement of $im(f)$ in $N$ and let $M_F$ denote an $R$-complement of $C(f)$ in $M$. Then $f$ decomposes into  $f=f^F\oplus f^0$, where $f^F: M_F\times M_F \to im(f)$ is a non-degenerate full $R$-bilinear mapping and $f^0:C(f)\times C(f)\to N^0$ is the identically degenerate bilinear mapping induced by $f$. \end{lem}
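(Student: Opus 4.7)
The plan is to define $f^F$ and $f^0$ as the obvious restrictions of $f$ and then verify the three bullet points of the direct-sum definition together with fullness and non-degeneracy of $f^F$. Concretely, set $f^F = f\restriction_{M_F\times M_F}$ landing in $im(f)$ (which makes sense since $f(x,y)\in im(f)$ for all $x,y$), and set $f^0 = f\restriction_{C(f)\times C(f)}$ which I view as landing in $N^0$. Because $C(f)$ annihilates all of $M$ from both sides, we have $f^0\equiv 0$, so the codomain is irrelevant and $f^0$ is identically degenerate.

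For the decomposition itself, the module direct sums $M=M_F\oplus C(f)$ and $N=im(f)\oplus N^0$ are given by hypothesis, so conditions (1) and (2) are immediate (with $M_1=M_3=M_F$, $M_2=M_4=C(f)$, $N_1=im(f)$, $N_2=N^0$). For condition (3), take $x,y\in M$ and write $x=x_1+x_2$, $y=y_1+y_2$ with $x_1,y_1\in M_F$ and $x_2,y_2\in C(f)$. Expanding by $R$-bilinearity gives
\[
f(x,y) = f(x_1,y_1)+f(x_1,y_2)+f(x_2,y_1)+f(x_2,y_2),
\]
and the last three summands vanish because $x_2,y_2\in C(f)$. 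Hence $f(x,y)=f^F(x_1,y_1)+f^0(x_2,y_2)$, as required.

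It remains to check that $f^F$ is full and non-degenerate. Fullness: every generator $f(x,y)$ of $im(f)$ equals $f^F(x_1,y_1)$ by the identity above, so $\langle f^F(M_F,M_F)\rangle = im(f)$, which means the canonical map $M_F\otimes_R M_F\to im(f)$ is surjective. Non-degeneracy: suppose $x\in M_F$ satisfies $f^F(x,M_F)=f^F(M_F,x)=0$. For an arbitrary $y\in M$ with decomposition $y=y_1+y_2$ as above,
\[
f(x,y) = f^F(x,y_1) + f(x,y_2) = 0 + 0 = 0,
\]
and symmetrically $f(y,x)=0$. Therefore $x\in C(f)\cap M_F = 0$, using that $M=M_F\oplus C(f)$ is a direct sum.

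There is no real obstacle here; the argument is a routine unpacking of the definitions. The only point worth being careful about is that one must invoke both properties of $C(f)$ (annihilation on the left \emph{and} on the right) in the non-degeneracy verification, and that the existence of the complements $M_F$ and $N^0$ is a genuine hypothesis, not something automatic from $R$ being a commutative unitary ring.
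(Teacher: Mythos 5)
Your proof is correct and follows essentially the same route as the paper: restrict $f$ to $M_F\times M_F$ and to $C(f)\times C(f)$, then expand $f(x,y)$ by bilinearity and observe that the cross terms and the $C(f)$-term vanish. The only difference is that you spell out the fullness and non-degeneracy of $f^F$, which the paper simply asserts; your verifications of both are correct.
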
 
\begin{proof} Let $f^F$ of  be the restriction of $f$ to $M_F\times M_F$. Then $f^F:M_F\times M_F\to im(f_F)$ is full and non-degenerate. Note that $im(f^F)=im(f)$ by definition. The restriction $f^0$ of $f$ to $C(f)\times C(f)$ is identically degenerate and in particular $im(f^0)=0\leq N^0$. Moreover for any $(x,y)\in M\times M$ there are unique $x_1,y_1\in M_F$, and $x_2,y_2\in C(f)$ such that $x=x_1+x_2$, $y=y_1+y_2$ and 
\begin{align*}f(x,y)&=f(x_1+x_2,y_1+y_2)\\
&=f(x_1,y_1)+f(x_1,y_2)+f(x_2,y_1)+f(x_2,y_2)\\
&=f(x_1,y_1)+f(x_2,y_2)\\
&=f^F(x_1,y_1)+f^0(x_2,y_2)\end{align*}
This proves the statement.\end{proof} 

\begin{defn}[Foundations and additions] The bilinear maps $f^F$ and $f^0$ obtained from $f$ in Lemma~\ref{bilin:decomp:lem} are called a foundation and an addition of $f$, respectively.\end{defn}  
%\begin{defn} We call a bilinear map $f:M\times M \to N$ torsion-free if the abelian groups $M$ and $N$ are torsion-free.\end{defn} 

\subsection{Bilinear maps as multi-sorted structures}
Given an $R$-bilinear map $f:M\times M \to N$, we associate two
multi-sorted structures to it. One $$\mathfrak{U}_R(f)=\langle R,M,N,\delta,s_M,s_N\rangle$$ where
the predicate $\delta$ describes $f$ and $s_M$ and $s_N$ describe the actions of $R$ on the modules $M$
and $N$ respectively. The other one
$$\mathfrak{U}(f)=\langle M,N, \delta\rangle$$
which contains only a
predicate $\delta$ describing the mapping $f$. When we say \emph{$R$ and its actions on $M$ and $N$ are interpretable in $f$} we mean that $\fra{U}_R(f)$ is interpretable in $\fra{U}(f)$.

\subsection{A structure theorem for $\omega$-stable bilinear maps} \label{st:bilin:sec}
This section contains a rather complete analysis of the structure of $\omega$-stable bilinear maps. Indeed we do not give a single structure theorem but rather a collection of statements that can not be realistically assembled in one theorem. 
    
Recall the decompositions $M=M_D\oplus M_B=M_D+M_C$ from Theorem~\ref{Mac:stable:tfa} for an $\omega$-stable abelian group $M$. Also note that even though $M_D$ is absolutely definable in $M$ the subgroup $M_B$ is just a complement of $M_D$ in $M$ and not necessarily definable (See~\cite{BN94}, Exercise 7, page 78). However $M_B\cong M/M_D$, and as such $M_B$ is absolutely interpretable in $M$. Moreover $M_D=M_{\Q}\oplus M_{T}$ where $M_{\Q}$ is a $\Q$-vector space and $M_{T}$ is a torsion divisible group. This later decomposition is not necessarily a definable decomposition either. On the other hand $M_C$ is absolutely definable in 
$M$ but $M_C\cap M_D$ is not necessarily trivial. Lastly, we clearly have $M_C\cap M_D< M_T$. We will follow this notation for the following results. 

\begin{prop} \label{InDec:stablebilin:prop} Let $f:M\times M \to N$ be an $\omega$-stable bilinear map. Then $f=f_D+f_C$ where $f_D:M_D\times M_D \to N_D$ in a bilinear map of divisible abelian groups and $f_C:M_C\times M_C\to N_C$ is a bilinear map of abelian groups of bounded exponent. Both $f_D$ and $f_C$ are absolutely definable in $f$. As such both $f_D$ and $f_C$ are $\omega$-stable bilinear maps. Moreover $M_D\cap M_C\leq M_T\leq C(f)$.\end{prop}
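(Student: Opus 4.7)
The plan is to apply Macintyre's theorem (Theorem~\ref{Mac:stable:tfa}) to both $M$ and $N$, which are absolutely definable and therefore $\omega$-stable in $\fra{U}(f)$. This yields decompositions $M = M_D + M_C$ and $N = N_D + N_C$ in which $M_D, N_D$ are divisible and $M_C, N_C$ are characteristic, absolutely definable, of bounded exponent. Since all four subgroups are absolutely definable in their ambient abelian groups, they are absolutely definable in $\fra{U}(f)$, so once the restrictions of $f$ are seen to land in the right targets the resulting $f_D$ and $f_C$ will automatically be absolutely definable and hence $\omega$-stable.

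The core of the proof — and the main obstacle — is the claim $M_T \le C(f)$. Fix $x \in M_T$ of order $n$. For $y \in M_D$, divisibility of $y$ lets me write $y = n y'$, whence $f(x,y) = n f(x, y') = f(nx, y') = 0$. For $y \in M_C$, with $e$ the exponent of $M_C$, divisibility of $x$ in $M_D$ lets me write $x = e x'$, whence $f(x,y) = f(x', e y) = 0$. Decomposing an arbitrary $y \in M$ as $y_D + y_C$ and symmetrizing in the two arguments yields $x \in C(f)$. The same $e$-shift trick, now applied to an arbitrary $x \in M_D$, also shows $f(M_D, M_C) = f(M_C, M_D) = 0$, a fact that is crucial for reassembling $f$ below. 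The inclusion $M_D \cap M_C \le M_T$ is then immediate: $M_D \cap M_C$ sits in $M_D$ and has bounded exponent, hence consists of torsion elements, and the torsion part of $M_D = M_\Q \oplus M_T$ is exactly $M_T$.

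To assemble the decomposition I first show $f(M_D, M_D) \subseteq N_D$: for $x \in M_D$, $k$-divisibility of $x$ gives $f(x, y) \in kN$ for every $k$, and in an $\omega$-stable abelian group $N = N_D \oplus N_B$ with $N_B$ of bounded exponent one has $\bigcap_k kN = N_D$, so I set $f_D := f|_{M_D \times M_D}$. The restriction $f|_{M_C \times M_C}$ has image of bounded exponent, so by enlarging $N_C$ if necessary (permissible given the flexibility in the choice of bounded-exponent complement in Macintyre's theorem; e.g.\ taking $N_C = \{z \in N : e'z = 0\}$ for $e'$ a common multiple of the exponents of $M_C$ and of $N/N_D$) I may assume $f(M_C, M_C) \subseteq N_C$ and set $f_C := f|_{M_C \times M_C}$. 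The identity $f = f_D + f_C$ then follows by expanding
\[
f(x_D + x_C,\; y_D + y_C) = f(x_D,y_D) + f(x_D,y_C) + f(x_C,y_D) + f(x_C,y_C)
\]
and invoking the cross-term vanishing. Well-definedness under the non-unique splitting $M = M_D + M_C$ is automatic, since any ambiguity in the chosen components $x_D, x_C, y_D, y_C$ lies in $M_D \cap M_C \subseteq C(f)$ and therefore does not affect the values of $f$. The hard part is really the first step of the second paragraph; everything else is a soft consequence of Macintyre's theorem together with some bookkeeping on the choice of $N_C$.
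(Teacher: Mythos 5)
Your proof is correct and follows essentially the same route as the paper's: Macintyre's theorem applied to the definable sorts $M$ and $N$, the divisibility-shift trick $f(mx',y)=f(x',my)=0$ to kill the cross terms and to prove $M_T\leq C(f)$, and absolute definability of all the subgroups involved. The only difference is that you are more careful than the paper about the containment $im(f_C)\leq N_C$ (the paper dismisses it as "easy to verify using bilinearity"), and your observation that one may need to enlarge $N_C$ to $\{z\in N: e'z=0\}$ is a legitimate and worthwhile refinement rather than a change of method.
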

\begin{proof}
Firstly, by definition $M$ and $N$ are definable in $f$. So they are both $\omega$-stable. Therefore $M=M_D+M_C$ and $N=N_D+N_C$. Now pick arbitrary $x\in M_D$ and $y\in M_C$. By assumption there is $m\in \N^+$ such that $my=0$. Moreover by divisibility of $M_D$ there is $x' \in M_D$ such that $mx'=x$. Therefore 
\begin{equation}
\label{tor-div:eqn}
f(x,y)=f(mx',y)=f(x',my)=0.
\end{equation} 
This shows that $f=f_D+f_C$ providing that $im(f_D)\leq N_D$ and $im(f_C)\leq N_C$, which are both easy to verify using bilinearity. The components $f_D$ and $f_C$ are both absolutely definable in $f$ since all the involved subgroups are so. Moreover $M_B\cong M/ M_D$ is absolutely interpretable in $M$ and therefore in $f$. The same holds for $N_B$. Therefore $f_B$ is absolutely interpretable in $f$. Hence both $f_D$ and $f_B$ are $\omega$-stable as claimed.

To prove the moreover part pick any $x\in M_T$ and any $t\in M$. There are $y\in M_D$ and $z\in M_C$ such that $t=x+y$. there is also $m\in \N^+$ such that $mx=mz=0$. Again there exist $x'\in M_T$ and $y'\in M_D$ such that $mx'=x$ and $my'=y$. Then
\begin{align*}
f(x,t)&=f(x,y+z)\\
& =f(x,y)+f(x,z)\\
&= f(mx,y')+f(x',mz)\\
&=0
\end{align*}
The argument can clearly be repeated with respect to the second variable. So $M_D\cap M_C\leq M_T\leq C(f)$, the first inequality being a clear one. \end{proof}

\begin{lem}[\cite{M90b}, Proposition 5.1]\label{P(f)-inter:lem} Let $f$ be an $\omega$-stable non-degenerate bilinear map. Then $P(f)$ and its action on $M$ are interpretable in $f$, i.e. the two-sorted module $\langle P(f), M, s_M\rangle$ is interpretable in $\fra{U}(f)$.\end{lem}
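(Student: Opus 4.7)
The plan is to interpret $P(f)$ and its action on $M$ in stages, climbing through the auxiliary objects $Sym_f(M) \supseteq Z(f) \supseteq Z_n(f)$ used in the construction of $P(f)$, and invoking $\omega$-stability twice: once to guarantee a \emph{finite complete system} for $f$, and once more to collapse the infinite intersection $P(f) = \bigcap_n Z_n(f)$ to a single $Z_t(f)$. The starting point is to produce a finite tuple $(e_1, \ldots, e_k) \in M^k$ with $\bigcap_{i=1}^k C(e_i) = C(f)$, where $C(e) = \{x \in M : f(x,e) = f(e,x) = 0\}$. Each $C(e)$ is a definable subgroup of $M$, and $M$, being a sort of the $\omega$-stable structure $\fra{U}(f)$, is itself $\omega$-stable, so the descending chain condition on definable subgroups forces finitely many $e_i$ to suffice; non-degeneracy then gives $\bigcap_i C(e_i) = 0$.

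With such a finite complete system in hand, the symmetric endomorphism ring $Sym_f(M)$ becomes interpretable. Any $A \in Sym_f(M)$ is determined by the tuple $(Ae_1, \ldots, Ae_k) \in M^k$: if $A, B \in Sym_f(M)$ agree on each $e_i$, then for $C = A-B$ and any $x \in M$, symmetry yields $f(Cx, e_i) = f(x, Ce_i) = 0$ and $f(e_i, Cx) = f(Ce_i, x) = 0$, so $Cx \in \bigcap_i C(e_i) = 0$. Hence $Sym_f(M)$ can be coded as the definable set of tuples $(a_1, \ldots, a_k) \in M^k$ for which a (necessarily unique) symmetric $R$-endomorphism extending $e_i \mapsto a_i$ exists; this extendibility condition is first-order in $\fra{U}(f)$, and so are the ring operations and the action on $M$, which are recovered from the relations $f(Ax, e_i) = f(x, a_i)$.

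With $Sym_f(M)$ interpretable, $Z(f)$ is the definable subring cut out by commutation with every symmetric endomorphism, and each $Z_n(f)$ is defined inside $Z(f)$ by the first-order formula $S_n$ appearing in the definition of $P(f)$. The descending chain $Z(f) \supseteq Z_1(f) \supseteq Z_2(f) \supseteq \cdots$ of definable subgroups in the $\omega$-stable structure must stabilize at some finite stage $t$, so $P(f) = \bigcap_n Z_n(f) = Z_t(f)$ is definable, and its action on $M$ is inherited from the interpretation of $Sym_f(M)$. This yields an interpretation of $\langle P(f), M, s_M\rangle$ in $\fra{U}(f)$.

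The main technical hurdle is the first step: without a finite complete system there is no obvious way to encode a symmetric endomorphism by finitely many parameters from $M$, and the interpretation of $Sym_f(M)$ breaks down. Everything downstream — definability of $Z(f)$, of each $Z_n(f)$, and the collapse of the infinite intersection to $Z_t(f)$ — is routine first-order bookkeeping once the encoding is in place and a second application of the descending chain condition is available.
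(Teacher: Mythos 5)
Your proof is correct and follows essentially the same route as the argument the paper relies on from \cite{M90a,M90b}: extract a finite complete system via the descending chain condition, code symmetric endomorphisms by their values on that system to interpret $Sym_f(M)$ and hence the definable subrings $Z(f)\supseteq Z_n(f)$, and apply the chain condition a second time to collapse $\bigcap_n Z_n(f)$ to a single $Z_t(f)$. Since the lemma only asserts interpretability (not parameter-free interpretability), your interpretation with the parameters $e_1,\ldots,e_k$ is all that is required.
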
 
We need to remark that the above result is stated in~\cite{M90b} for torsion-free bilinear maps. However a quick study of the proof shows that the statement holds for the arbitrary case. 
\begin{prop} \label{Div:stableBilin:prop} Assume $f:M_D\times M_D\to N_D$ is an $\omega$-stable bilinear map of divisible abelian groups. Then $f= f_{\Q}\oplus f^0$ where $f_{\Q}$ is a foundation of $f$ and $\Q$-bilinear, and $f^0$ is an addition of $f$.  \end{prop}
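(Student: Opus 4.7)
The plan is to apply Lemma~\ref{bilin:decomp:lem}, once I verify that $M_D$ and $N_D$ split as abelian groups over $C(f)$ and $\text{im}(f)$ respectively, and then observe that the resulting foundation automatically lives between $\Q$-vector spaces. First, using Theorem~\ref{Mac:stable:tfa} I would decompose $M_D = M_T \oplus M_\Q$ and $N_D = N_T \oplus N_\Q$ with $M_\Q, N_\Q$ being $\Q$-vector spaces and $M_T, N_T$ the torsion divisible summands. The same computation as in \eqref{tor-div:eqn} shows $M_T \leq C(f)$, so $f(x,y)$ only depends on the $M_\Q$-components of $x,y$ and in particular $\text{im}(f) = \text{im}(f|_{M_\Q \times M_\Q})$.

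The key step is to show $\text{im}(f)$ is a $\Q$-vector space. Since $M_\Q$ is a $\Q$-vector space, the abelian group tensor product $M_\Q \otimes_\Z M_\Q$ coincides with $M_\Q \otimes_\Q M_\Q$ and is therefore torsion-free and divisible. The restriction $f|_{M_\Q \times M_\Q}$ factors through this tensor product, so its image in $N_D$ is a quotient of a $\Q$-vector space, hence itself torsion-free and divisible, i.e., a $\Q$-vector space.

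Next I would deduce the two splittings. On the codomain side, $\text{im}(f)$ being torsion-free and divisible is injective as a $\Z$-module, so it splits off: $N_D = \text{im}(f) \oplus N^0$. On the domain side, I would first check that $C(f) \cap M_\Q$ is a $\Q$-subspace of $M_\Q$: for $c \in C(f) \cap M_\Q$, $n \in \N$, and arbitrary $y \in M_\Q$, the element $f(c/n, y)$ lies in the torsion-free $\text{im}(f)$ yet satisfies $n f(c/n, y) = f(c, y) = 0$, forcing $f(c/n, y)=0$; combined with $M_T \leq C(f)$ this gives $c/n \in C(f)$. Picking a $\Q$-linear complement $M_F$ of $C(f) \cap M_\Q$ inside $M_\Q$ then yields $M_D = M_F \oplus C(f)$ since $C(f) = M_T \oplus (C(f) \cap M_\Q)$. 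Lemma~\ref{bilin:decomp:lem} now produces the decomposition $f = f^F \oplus f^0$ with foundation $f^F: M_F \times M_F \to \text{im}(f)$ and addition $f^0: C(f) \times C(f) \to N^0$, and since both $M_F$ and $\text{im}(f)$ are $\Q$-vector spaces, the $\Z$-bilinear map $f^F$ is automatically $\Q$-bilinear. Taking $f_\Q := f^F$ finishes the proof.

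I expect the main subtle point to be the $\Q$-vector space structure on $\text{im}(f)$: without the tensor product observation one cannot rule out that $f$ picks up torsion in the codomain, in which case both the splitting of $M_D$ over $C(f)$ and the $\Q$-bilinearity of the foundation could fail.
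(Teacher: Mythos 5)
Your reduction to the two splittings ($M_D$ over $C(f)$ and $N_D$ over $im(f)$) is the right skeleton, but the step you yourself single out as the crux --- that $im(f)$ is torsion-free --- is exactly where the argument breaks. A quotient of a $\Q$-vector space by an arbitrary subgroup is divisible but need \emph{not} be torsion-free: $\Q/\Z$ is a quotient of $\Q$. So factoring $f|_{M_\Q\times M_\Q}$ through $M_\Q\otimes_\Z M_\Q\cong M_\Q\otimes_\Q M_\Q$ only yields divisibility of $im(f)$, not torsion-freeness. The gap is not cosmetic: the map $f\colon \Q\times\Q\to\Q/\Z$, $f(x,y)=xy+\Z$, is a full non-degenerate $\Z$-bilinear map of divisible abelian groups whose image is pure torsion, so no purely algebraic argument of the kind you propose can succeed. (That $f$ is of course not $\omega$-stable: the uniformly definable annihilators $C(2^{-k})=\{x: f(x,2^{-k})=0\}=2^k\Z$ form an infinite strictly descending chain of definable subgroups of $M$.) Since this step also underlies your proof that $C(f)\cap M_\Q$ is a $\Q$-subspace and the $\Q$-bilinearity of $f^F$, your proposal as written never uses $\omega$-stability in an essential way --- and the example shows it cannot be dispensed with.

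The paper closes precisely this gap via Lemma~\ref{P(f)-inter:lem}. First, the divisibility of $C(f)$ and the torsion-freeness of $M/C(f)$ are obtained by direct computations with roots that do not presuppose anything about $im(f)$; this gives the domain-side splitting with $M_F\cong M/C(f)$ a $\Q$-vector space. Then the scalar ring $P(f)$ of the induced non-degenerate map, together with its action, is interpretable in $f$ and hence $\omega$-stable; since it embeds in $End_{\Z}(M/C(f))$, which is torsion-free because $M/C(f)$ is, Theorem~\ref{Mac:stable:tfa} forces its additive group to be divisible, so $P(f)$ is a $\Q$-algebra containing $\frac{1}{m}\mathrm{id}$ for every $m$. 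The built-in action of $P(f)$ on $im(f)$ then makes $im(f)$ a $\Q$-vector space, which is what you need. If you replace your tensor-product paragraph by this interpretability argument, the rest of your outline goes through.
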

\begin{proof}
	To make notation of the proof less complicated we assume $f:M\times M\to N$ is an $\omega$-stable bilinear map of divisible groups. Firstly we prove that $C(f)$ is divisible. Assume $z\in C(f)$ and pick any non-zero integer $m$ and consider $z'\in M$ such that $mz'=z$. Then for any $x\in M$ there exists $x'\in M $ such that $mx'=x$ and therefore
	$$f(z',x)=f(z,x')=0=f(x',z)=f(x,z').$$ So $z'\in C(f)$. Since $C(f)$ is divisible there is a subgroup $M_F$ of $M$ where $M=M_F\oplus C(f)$. Next we prove $M_F$ is divisible and torsion-free, or rather that $M/C(f)$ is so, since $M_F\cong M/C(f)$. It is enough to prove that it is torsion-free since $M$ is divisible. If $mx\in C(f)$ for a nonzero integer $m$ then for any $y\in M$, there exists $y'\in M$ such that $my'=y$. Therefore $f(x,y)=f(mx,y')=0=f(y',mx)=f(y,x)$. So $x\in C(f)$. This proves torsion-freeness of $M_F$. Consider the largest scalar ring $P(f)$ of $f$ which is $\omega$-stable since by Lemma~\ref{P(f)-inter:lem} $P(f)$ and its action on $M$ are interpretable in $f$. $M_F$ is a $\Q$-algebra so is $P(f)$. By definition $P(f)$ has a well-defined action on $im(f)$. Since $P(f)$ is a $\Q$-algebra, $im(f)$ is also a $\Q$-algebra and therefore torsion-free. That $im(f)$ is divisible is a direct corollary of definitions. Since $im(f)$ is divisible it has a complement $N^0$ in $N$. So $f^0:C(f)\times C(f) \to N^0$ is an identically degenerate map, $f_\Q: M_F\times M_F \to im(f)$ induced by $f$ is the corresponding foundation and $\Q$-bilinear.  \end{proof}
	\begin{cor}\label{st:fullnodeg:cor} Assume $f:M\times M \to N$ is a full non-degenerate $\omega$-stable bilinear map. Then in the notation of Propositions~\ref{InDec:stablebilin:prop} and \ref{Div:stableBilin:prop}, $f$ admits a direct decomposition $f=f_\Q\oplus f_C$ where both $f_\Q$ and $f_C$ are absolutely definable in $f$ and therefore $\omega$-stable.\end{cor}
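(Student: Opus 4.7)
The plan is to combine Propositions~\ref{InDec:stablebilin:prop} and~\ref{Div:stableBilin:prop}, using fullness and non-degeneracy to upgrade the two sum decompositions into direct ones on both the source and the target.

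First, I would apply Proposition~\ref{InDec:stablebilin:prop} to write $f = f_D + f_C$, where $f_D \colon M_D \times M_D \to N_D$ and $f_C \colon M_C \times M_C \to N_C$ are both absolutely definable in $f$. Non-degeneracy gives $C(f) = 0$, and since the same proposition shows $M_T \leq C(f)$, we get $M_T = 0$; as $M_D \cap M_C \leq M_T$ this yields $M = M_D \oplus M_C$. Next, $f_D$ is itself non-degenerate: if $x \in M_D$ kills $M_D$ on both sides under $f_D$, then combining this with the identities $f(M_D, M_C) = f(M_C, M_D) = 0$ (established in the proof of Proposition~\ref{InDec:stablebilin:prop}) and the splitting $M = M_D \oplus M_C$ forces $x \in C(f) = 0$.

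Since $f_D$ is an $\omega$-stable bilinear map of divisible abelian groups---being absolutely definable in the $\omega$-stable map $f$---Proposition~\ref{Div:stableBilin:prop} applies. The vanishing $C(f_D) = 0$ collapses the addition component to zero, leaving only the foundation, so $f_D = f_\Q$ is $\Q$-bilinear and $im(f_\Q)$ is a $\Q$-algebra; in particular it is torsion-free divisible.

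Finally, fullness closes the target side: $N = im(f) = im(f_\Q) + im(f_C)$, and since $im(f_\Q)$ is torsion-free while $im(f_C)$ has bounded exponent---because $nM_C = 0$ for some $n \geq 1$ gives $n \cdot f_C(x,y) = f_C(nx, y) = 0$---the intersection $im(f_\Q) \cap im(f_C)$ is trivial, and $N = im(f_\Q) \oplus im(f_C)$. Combined with $M = M_D \oplus M_C$, this promotes $f = f_D + f_C$ to a direct decomposition $f = f_\Q \oplus f_C$, with both components absolutely definable (hence $\omega$-stable) by Proposition~\ref{InDec:stablebilin:prop}. I expect the main delicate point to be coordinating the two sides cleanly: non-degeneracy handles the overlap on the source, while fullness together with the torsion-versus-torsion-free dichotomy handles it on the target; without either hypothesis one obtains only a sum, not a direct sum.
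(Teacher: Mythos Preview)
Your argument is correct and follows essentially the same route as the paper: use non-degeneracy with Proposition~\ref{InDec:stablebilin:prop} to force $M_T=0$ and hence $M=M_D\oplus M_C$ with $f_D=f_\Q$, then use fullness together with the torsion-free/bounded dichotomy on the images to split $N$. Your detour through Proposition~\ref{Div:stableBilin:prop} to identify $f_D$ with $f_\Q$ is slightly more elaborate than needed---since $M_T=0$ already gives $M_D=M_\Q$ directly---but it is valid and yields the torsion-freeness of $im(f_\Q)$ that the paper also invokes.
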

	\begin{proof}
	Firstly $M_T\leq C(f)=0$ by Proposition~\ref{InDec:stablebilin:prop} and the non-degeneracy assumption. So $M=M_\Q \oplus M_C$, where $M_\Q$ is absolutely definable in $M$ as $M_D$. Clearly now $f_D=f_\Q$. The hypothesis that $f$ is a full bilinear map implies that
	$$N=im(f)=im(f_\Q+f_C)=im(f_\Q)+im(f_C).$$
	By Proposition~\ref{Div:stableBilin:prop}, $im(f_\Q)$ is torsion-free, so $im(f_\Q)\cap im(f_C)=0$, and $N=im(f_\Q)\oplus im(f_C)$, while clearly $N_D=im(f_\Q)$, and $N_C=im(f_C)$. The rest of the statement should be clear.    
	\end{proof}
	 
\begin{prop}[\cite{M90b}, Theorem 5.2] \label{QDiv:stableBilin:prop} Assume $f$ is an $\omega$-stable bilinear map of torsion-free abelian groups. Then it decomposes into $$f=f_1 \oplus \cdots \oplus f_n \oplus f^0$$ where each $f_i$, $i=1,\ldots n$ is a directly indecomposable $k_i$-bilinear mapping for some algebraically closed field $k_i$ of characteristic zero and $f^0$ is an identically degenerate $\Q$-bilinear map.
\end{prop}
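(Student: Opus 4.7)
The plan is to reduce step by step to the full non-degenerate case and then exploit the $\omega$-stability of the largest scalar ring $P(f)$. First, since $M$ and $N$ are definable in $f$, they are torsion-free $\omega$-stable abelian groups, so by Macintyre's Theorem~\ref{Mac:stable:tfa} they are divisible, hence $\Q$-vector spaces. Consequently $f$ is $\Q$-bilinear and Proposition~\ref{Div:stableBilin:prop} applies with $M_D=M$ and $N_D=N$, yielding a decomposition $f = f_F\oplus f^0$ where $f_F:M_F\times M_F\to im(f)$ is a full non-degenerate $\Q$-bilinear foundation and $f^0$ is an identically degenerate $\Q$-bilinear addition. This reduces the problem to decomposing the full non-degenerate piece $f_F$.

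Next I would study $P(f_F)$. Because $f_F$ is absolutely definable in $f$ it is again $\omega$-stable, so by Lemma~\ref{P(f)-inter:lem} the two-sorted module $\langle P(f_F), M_F, s\rangle$ is interpretable in $f_F$; in particular $P(f_F)$ is a commutative associative unitary $\omega$-stable ring. Since $M_F$ is a $\Q$-vector space and the action of $P(f_F)$ on $M_F$ is faithful, $P(f_F)$ is of characteristic zero, i.e.\ it contains $\Q$. Applying the Cherlin--Reineke Theorem~\ref{CR:stable}, one gets a decomposition $P(f_F)=R_1\times\cdots\times R_n$ into local rings with nilpotent maximal ideals $J_i$ whose residue fields are either algebraically closed or finite; characteristic zero rules out the finite option, so each $k_i=R_i/J_i$ is an algebraically closed field of characteristic zero.

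Each $R_i$ is directly indecomposable, so the bilinear/scalar decomposition lemma (\cite{M90a}, Proposition~3.1, quoted in the excerpt) transfers this into a direct decomposition $f_F=f_1\oplus\cdots\oplus f_n$ with $P(f_i)=R_i$ and each $f_i$ directly indecomposable. Combining with $f^0$ gives the desired $f=f_1\oplus\cdots\oplus f_n\oplus f^0$. The remaining point is to equip each $f_i$ with a genuine $k_i$-bilinear structure: for this one needs a coefficient field, i.e.\ a ring-theoretic embedding $k_i\hookrightarrow R_i$ lifting the residue map. This is the one classical input I would invoke rather than derive: an equicharacteristic-zero local ring with nilpotent maximal ideal always admits such a lift (a standard consequence of Cohen's structure theorem, or of a direct Zorn-type argument using that the prime field $\Q$ sits in $R_i$ and that algebraic elements over $\Q$ can be lifted through the nilpotent ideal by divisibility). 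Once $k_i\subseteq P(f_i)$, the map $f_i$ is automatically $k_i$-bilinear, finishing the proof.

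The main obstacle is precisely this coefficient-field step: the decomposition lemma delivers only the ring-level factor $R_i$, whereas the statement demands a bilinear map over the field $k_i$, so one must recover the field-valued structure from the local $\Q$-algebra $R_i$. Every other step is a relatively clean application of the preceding infrastructure (Macintyre for divisibility, Proposition~\ref{Div:stableBilin:prop} for the foundation/addition split, Lemma~\ref{P(f)-inter:lem} for the $\omega$-stability of $P(f_F)$, Cherlin--Reineke for the ring decomposition, and \cite{M90a} Proposition~3.1 for translating it back to a bilinear decomposition).
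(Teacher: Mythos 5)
Your proposal is correct, and the paper itself gives no proof of this proposition---it is imported as Theorem~5.2 of~\cite{M90b}---but your reconstruction follows exactly the method the paper uses for its parallel in-house arguments (the proofs of Proposition~\ref{boundedstablebilin:prop}, Proposition~\ref{Div:fmrBilin:prop} and Theorem~\ref{w-st-al:mainthm}): reduce to the full non-degenerate foundation, interpret $P(f)$ via Lemma~\ref{P(f)-inter:lem}, apply Cherlin--Reineke, transfer the ring decomposition back to the bilinear map via Proposition~3.1 of~\cite{M90a}, and obtain the coefficient fields from Cohen's theorem, which the paper packages precisely as Lemma~\ref{nilrep:lem}. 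The ``main obstacle'' you identify is thus already resolved in the paper's toolkit, and no gap remains.
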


\begin{cor} \label{Div:stableBilin:cor} Assume $f$ is an $\omega$-stable bilinear map of divisible abelian groups. Then it decomposes into $$f=f_1 \oplus \cdots \oplus f_n \oplus f^0$$ where each $f_i$, $i=1,\ldots n$ is a directly indecomposable $k_i$-bilinear mapping for some algebraically closed field $k_i$ of characteristic zero and $f^0$ is an identically degenerate bilinear map of divisible abelian groups.
\end{cor}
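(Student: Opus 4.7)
The plan is to reduce to the torsion-free case handled by Proposition~\ref{QDiv:stableBilin:prop} by first splitting off an identically degenerate piece via Proposition~\ref{Div:stableBilin:prop}, and then apply Proposition~\ref{QDiv:stableBilin:prop} to the resulting foundation.

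First I would invoke Proposition~\ref{Div:stableBilin:prop} directly on $f$: since the hypotheses are satisfied (namely $f$ is an $\omega$-stable bilinear map of divisible abelian groups), we obtain a decomposition $f=f_{\Q}\oplus f^0$, where $f_{\Q}\colon M_F\times M_F\to \mathrm{im}(f)$ is a full non-degenerate $\Q$-bilinear foundation with $M_F$ torsion-free (and in fact divisible, as established inside the proof of that proposition), and $f^0\colon C(f)\times C(f)\to N^0$ is an identically degenerate addition. Because $C(f)$ is divisible and $N^0$ is an $R$-complement of the divisible subgroup $\mathrm{im}(f)$ inside the divisible group $N$, the group $N^0$ is itself divisible, so $f^0$ is already an identically degenerate bilinear map of divisible abelian groups, as required.

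Next I would argue that $f_{\Q}$ is again $\omega$-stable. This follows from the fact that $C(f)$ and $\mathrm{im}(f)$ are absolutely definable in $f$, and that $M_F\cong M/C(f)$ and $N^0\cong N/\mathrm{im}(f)$ are absolutely interpretable; hence $f_{\Q}$ is interpretable in $f$. Thus $f_{\Q}$ is an $\omega$-stable bilinear map between torsion-free abelian groups, and Proposition~\ref{QDiv:stableBilin:prop} applies, yielding
\[
f_{\Q}=f_1\oplus\cdots\oplus f_n\oplus g^0,
\]
where each $f_i$ is a directly indecomposable $k_i$-bilinear map over an algebraically closed field $k_i$ of characteristic zero, and $g^0$ is identically degenerate.

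Finally I would observe that $g^0$ must be trivial: if $M_F=M_1\oplus\cdots\oplus M_n\oplus M^0$ is the decomposition corresponding to the direct sum, then by the definition of an identically degenerate summand we have $M^0\subseteq C(f_{\Q})$, but $f_{\Q}$ is non-degenerate, so $M^0=0$ and hence $g^0=0$. Substituting this back gives $f=f_1\oplus\cdots\oplus f_n\oplus f^0$ with the required properties. The only delicate point in this plan—the step I expect to require the most care—is verifying that the identically degenerate summand produced by Proposition~\ref{QDiv:stableBilin:prop} is absorbed into (in fact is trivial inside) the existing $f^0$, and that the groups appearing in $f^0$ remain divisible after the second splitting; both reduce to the observation that $f_{\Q}$ is non-degenerate and full so no new degenerate component is created.
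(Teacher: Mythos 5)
Your proposal is correct and follows essentially the same route as the paper, whose entire proof of this corollary is the one-line observation that it is ``clear from Propositions~\ref{Div:stableBilin:prop} and \ref{QDiv:stableBilin:prop}''; you simply make explicit the two steps (split off the addition $f^0$, then decompose the torsion-free foundation $f_{\Q}$) that the paper leaves implicit. Your additional check that the degenerate summand produced by the second application is absorbed (indeed trivial, by non-degeneracy and fullness of $f_{\Q}$) is a worthwhile detail the paper omits.
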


\begin{proof} Clear from propositions~\ref{Div:stableBilin:prop} and \ref{QDiv:stableBilin:prop}.\end{proof} 
Before analyzing the bounded case we need to state a classical result on the structure of local complete rings due to I. S. Cohen. We say that a local ring $P$ with the unique maximal ideal $J$ is \emph{equicharacteristic} if $char(P)=char(P/J)$. We say that such a ring $P$, \emph{admits a field of representatives} if $P$ contains a subfield $L$ that maps onto $P/J$ under the canonical homomorphism $P\to P/J$.
\begin{thm}[I. S. Cohen (See~\cite{ZSII}, Theorem 27, Page 304)] \label{cohen:thm}A complete local equicharacteristic ring admits a field of representatives.\end{thm}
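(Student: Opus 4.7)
The plan is to apply Zorn's lemma to the poset of subfields of $P$ (ordered by inclusion) to obtain a maximal subfield $L\subseteq P$, and then to show that the reduction map $\pi\colon P\to k:=P/J$ carries $L$ onto $k$. This poset is non-empty because $P$ is equicharacteristic: the prime subfield of $k$, namely $\Q$ or $\mbb{F}_p$, embeds compatibly into $P$ and into $k$. Every chain of subfields has its union as an upper bound, so a maximal subfield $L$ exists.

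To see that $\pi(L)=k$, suppose otherwise and pick $\bar a\in k\setminus\pi(L)$. I would distinguish two sub-cases. If $\bar a$ is transcendental over $\pi(L)$, lift it to any $a\in P$ with $\pi(a)=\bar a$; the evaluation map $L[x]\to P$, $x\mapsto a$, is injective (any kernel element would reduce to a non-trivial relation for $\bar a$ over $\pi(L)$), and every non-zero element of its image reduces to a non-zero element of $k$, hence is a unit in the local ring $P$. Consequently $L(a)$ embeds properly into $P$, contradicting maximality of $L$. If $\bar a$ is separably algebraic over $\pi(L)$ with monic minimal polynomial $\bar f\in\pi(L)[x]$, lift $\bar f$ to a monic $f\in L[x]$. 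Because $\bar f'(\bar a)\neq 0$, Hensel's lemma, valid in the complete local ring $P$, produces $a\in P$ with $f(a)=0$ and $\pi(a)=\bar a$; then irreducibility of $f$ forces $L[a]\cong L[x]/(f)$ to be a subfield of $P$ properly containing $L$, again a contradiction.

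The main obstacle is the purely inseparable case in characteristic $p>0$, where minimal polynomials of the form $\bar f(x)=x^{p^n}-\bar c$ have zero formal derivative and Hensel fails. This is also the point at which the equicharacteristic hypothesis truly enters. The remedy is to build $L$ compatibly with a $p$-basis of $k$: pick a $p$-basis $\{\bar x_\alpha\}$ of $k$, lift each $\bar x_\alpha$ to some $x_\alpha\in P$, and use completeness of $P$ together with $\mathrm{char}(P)=p$ to extract successive $p^n$-th roots of the $x_\alpha$ inside $P$ by a Cauchy-sequence argument. The resulting subring reduces onto the perfect closure of the lifted $p$-basis inside $k$. Once such a preparatory subfield sits inside $P$, the remaining residues are separably algebraic over its image, and the Hensel argument of the previous paragraph completes the lift. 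In the mixed-characteristic setting no such root extraction is available in $P$, which is precisely why the coefficient-field conclusion has to be weakened to a coefficient-ring conclusion there.
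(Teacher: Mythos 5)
The paper offers no proof of this statement---it is imported wholesale from Zariski--Samuel---so the only question is whether your argument actually proves Cohen's theorem. Your characteristic-zero half is correct and is essentially the standard proof: Zorn's lemma produces a maximal subfield $L$ (the poset is non-empty because no nonzero integer lies in $J$, so every nonzero integer is a unit and $\mathbb{Q}\subseteq P$); a residue transcendental over $\pi(L)$ lifts to a proper transcendental extension inside $P$, a separably algebraic residue lifts by Hensel's lemma, and in characteristic zero every algebraic extension is separable, so maximality forces $\pi(L)=k$. Since the paper only ever invokes the theorem through Lemma~\ref{nilrep:lem}, where $J$ is nilpotent and Hensel's lemma is elementary, this part fully covers the characteristic-zero applications.

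The purely inseparable case, however, contains a genuine gap, and it is exactly the case the paper also needs (it feeds into Proposition~\ref{boundedstablebilin:prop}). You propose to ``extract successive $p^n$-th roots of the $x_\alpha$ inside $P$'' by a Cauchy-sequence argument. Such roots do not exist in general: take $P=\mathbb{F}_p(u)[[t]]$, so $k=\mathbb{F}_p(u)$ with $p$-basis $\{u\}$, and lift $\bar u$ to $u$ itself; since $P^p=k^p[[t^p]]$ and $u\notin k^p$, the element $u$ has no $p$-th root in $P$---completeness supplies limits of Cauchy sequences, not roots. Worse, the maximal-subfield strategy itself breaks in characteristic $p$ rather than merely needing a supplement: in the same $P$ the subfield $\mathbb{F}_p(u^p+t)$ exists, and any maximal subfield $L$ containing it cannot surject onto $k$, for if some $v=u+tw\in L$ had residue $\bar u=u$, then $v^p-(u^p+t)=-t(1-t^{p-1}w^p)$ would be a nonzero element of $L\cap J$. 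The correct equicharacteristic-$p$ argument runs in the opposite direction: fix lifts $x_\alpha$ of a $p$-basis $\Lambda$ once and for all, form the descending chain of subrings $L_n$ generated by $P^{p^n}$ and the $x_\alpha$ (each still surjects onto $k$ because $k=k^{p^n}(\Lambda)$), and use completeness to show that $\bigcap_n L_n$ is a field of representatives. No roots are ever extracted, and the Zorn/Hensel machinery plays no role in this case.
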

 
 \begin{lem}\label{nilrep:lem} Assume $P$ is a local ring whose maximal ideal $J$ is nilpotent. If $char(P)=0$ or $char(P)\neq 0$ and $P$ is equicharacteristic then $P$ admits a field of representatives.\end{lem}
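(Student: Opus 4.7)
The plan is to reduce the statement directly to Cohen's theorem (Theorem~\ref{cohen:thm}) by verifying its two hypotheses: completeness and equicharacteristicness. Since $J$ is nilpotent, with $J^n = 0$ say, the $J$-adic topology on $P$ is trivially discrete, hence $P$ is automatically $J$-adically complete, so no real work is needed for the completeness hypothesis. Thus the only content of the argument is to check that $P$ is equicharacteristic in the stated cases.

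The case $\operatorname{char}(P) \neq 0$ is equicharacteristic by hypothesis. For the case $\operatorname{char}(P) = 0$, I would argue as follows. Suppose for contradiction that $\operatorname{char}(P/J) = p > 0$ for some prime $p$. Then $p \cdot 1_P \in J$. Since $J^n = 0$, we get
\[
(p \cdot 1_P)^n = p^n \cdot 1_P = 0,
\]
which contradicts $\operatorname{char}(P) = 0$. The only remaining possibility is $\operatorname{char}(P/J) = 0$, so $P$ is equicharacteristic.

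Having verified both hypotheses, I would invoke Theorem~\ref{cohen:thm} to conclude that $P$ admits a field of representatives, i.e.\ a subfield $L \subseteq P$ mapping onto $P/J$ under the canonical surjection.

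The ``main obstacle'' here is more of a bookkeeping point than a genuine difficulty: one must be a little careful in the $\operatorname{char}(P)=0$ case to observe that the residue field cannot have positive characteristic precisely because $J$ is nilpotent (in a general local ring of characteristic zero with, say, $J$ merely the Jacobson radical, one could easily have $\operatorname{char}(P/J) = p$). Beyond this, the lemma is essentially a direct corollary of Cohen's structure theorem.
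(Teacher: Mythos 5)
Your proof is correct and follows essentially the same route as the paper's: observe that nilpotence of $J$ makes $P$ $J$-adically complete, handle the positive characteristic case by hypothesis, and in the characteristic zero case use nilpotence of $J$ to rule out a nonzero integer (in your version, $p\cdot 1_P$) lying in $J$, so that $P$ is equicharacteristic and Cohen's theorem applies. The paper phrases the last step as $\Z\cdot 1_P\cap J=0$ so that $\Z$ embeds in $P/J$, but this is the same argument in contrapositive form.
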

\begin{proof} 	
	Firstly we prove that regardless of the characteristic of $P$,  it is isomorphic to its $J$-adic completion $\hat{P}_J$ and therefore it is complete in its $J$-adic topology. Recall that given the sequence of ideals $(J^i)_{i\in \N}$, elements of $\hat{P}_J$ are sequences $(x_i+J^i)_{i\in \N}\in \prod_{i\in \N}P/J^i$, where $x_i-x_j\in J^i$ for all $j\geq i$. If $J$ is nilpotent, say $J^n=0$ for some $n$, each such sequence is completely determined by $x_n\in P$. So the diagonal injection $\iota: P \to \hat{P}_J$, $x\mapsto (x+J^i)$ is indeed a surjection.
	
	Therefore $char(P)\neq 0$ case already follows from Theorem~\ref{cohen:thm} since $P$ is assumed to be equicharacteristic. So assume $char(P)=0$. We need to show that $P$ is equicharacteristic. Consider the copy of the ring of integers $\Z\cdot 1_P$ in $P$ also denoted by $\Z$. Note that $\Z\cap J=0$, otherwise $J^n\neq 0$ for any $n\in \N^+$. This means that $\Z$ embeds into the field $P/J$ and so $P$ is equicharacteristic.    \end{proof}  
\begin{prop}\label{boundedstablebilin:prop}
Assume $f:M_B\times M_B\to N_B$ is an $\omega$-stable bilinear map of abelian groups of bounded exponent. Then the full non-degenerate bilinear mapping $f^F:M_B/C(f)\times M_B/C(f)\to im(f)$ induced by $f$ decomposes into  
$$f^F=f^F_1\oplus \cdots \oplus f_n^F,$$
where each $f^F_i$, $i=1, \ldots, m$ is a directly indecomposable bilinear mapping over a ring $P_i$. %where each field $F_i$ is either algebraically closed of some prime characteristic or finite.
Each ring $P_i$ is a local ring with a maximal nilpotent ideal $J_i$ and the field $F_i= P_i/J_i$ is either algebraically closed of positive characteristic or finite. In case that $P_i$ is equicharacteristic, $P_i$ contains fields of representatives and therefore $f^F_i$ is an $F_i$-bilinear map.  

\end{prop}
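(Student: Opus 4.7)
The plan is to reduce the problem to the structure theory of the scalar ring $P(f^F)$ and then invoke the Cherlin--Reineke classification. First I would observe that $C(f)$ and $\mathrm{im}(f)$ are absolutely definable in $f$, so $f^F$ is absolutely interpretable in $f$ and is therefore an $\omega$-stable, full, non-degenerate bilinear map. By Lemma~\ref{P(f)-inter:lem}, $P(f^F)$ together with its action on $M_B/C(f)$ is interpretable in $f^F$, so $P(f^F)$ is an $\omega$-stable commutative associative unitary ring.

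Next I would verify that $P(f^F)$ has bounded additive exponent. Since $M_B$ has bounded exponent $m$, so does the quotient $M_B/C(f)$. By construction $P(f^F)$ acts faithfully on $M_B/C(f)$, and in particular $m\cdot 1_{P(f^F)}$ acts as zero; hence $m\cdot 1_{P(f^F)} = 0$, so the additive group of $P(f^F)$ has exponent dividing $m$. Applying Theorem~\ref{CR:stable} to $P(f^F)$ produces a decomposition
$$P(f^F)\cong P_1\times\cdots\times P_n$$
into local rings $P_i$ with nilpotent maximal ideal $J_i$ and residue field $F_i = P_i/J_i$ algebraically closed or finite. Since local rings have no nontrivial idempotents, each $P_i$ is directly indecomposable as a ring, so the decomposition proposition of \cite{M90a} (the lemma preceding Lemma~\ref{bilin:decomp:lem}) translates this into a direct decomposition $f^F = f^F_1\oplus\cdots\oplus f^F_n$ with $P(f^F_i) = P_i$ and each $f^F_i$ directly indecomposable.

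For the characteristic assertion, note that $P_i$ inherits bounded additive exponent from $P(f^F)$, so $\mathrm{char}(P_i) > 0$ and therefore $\mathrm{char}(F_i) > 0$; in particular, whenever $F_i$ is algebraically closed, it is of positive characteristic. Finally, in the equicharacteristic case Lemma~\ref{nilrep:lem} supplies a field of representatives $F_i \hookrightarrow P_i$, and restricting the $P_i$-action on the sort of $f^F_i$ to this subfield gives $f^F_i$ the structure of an $F_i$-bilinear map.

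The step I expect to require the most care is the faithfulness argument establishing that $P(f^F)$ has bounded exponent: one must remember that $P(f^F)$ is constructed as a subring of $\mathrm{End}_R(M_B/C(f))$ acting on the non-degenerate quotient and use non-degeneracy of $f^F$ to conclude that the identity of $P(f^F)$ is annihilated by $m$ as an element of the ring, not merely as an operator. Everything else is a direct application of the cited structure theorems combined with the ring-bilinear decomposition correspondence of \cite{M90a}.
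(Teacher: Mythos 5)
Your proposal follows the paper's proof very closely in outline, but it contains one step that would fail: you assert that $\mathrm{im}(f)$ is absolutely definable in $f$ and use this to conclude that $f^F$ (whose target sort is $\mathrm{im}(f)$) is absolutely interpretable in $f$ and hence $\omega$-stable. In general $\mathrm{im}(f)=\langle f(M,M)\rangle$ is only a countable increasing union of the definable sets of sums of at most $n$ values of $f$; it is definable under extra hypotheses such as finite width (Corollary~\ref{bilin:fwidth}), but not in general, and the paper explicitly warns in this very proof that $\mathrm{im}(f)$ ``might not be definable in $N$.'' The paper circumvents the problem by working instead with the map $f':M_B/C(f)\times M_B/C(f)\to N$, which \emph{is} interpretable in $f$ because $C(f)$ is definable (an intersection of the definable centralizers $C(e)$, reduced to a finite intersection by the chain condition). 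Since the conditions $S_n$ defining $P$ quantify only over $M$, one has $P(f^F)=P(f')$, and the action of this ring on $\mathrm{im}(f)$ is well defined by construction whether or not $\mathrm{im}(f)$ is a definable set. Your subsequent appeal to Lemma~\ref{P(f)-inter:lem} should therefore be routed through $f'$ rather than through $f^F$.

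With that repair, the rest of your argument coincides with the paper's: $P(f')$ is an $\omega$-stable scalar ring, Theorem~\ref{CR:stable} gives the decomposition into local rings $P_i$ with nilpotent maximal ideals, locality forces each $P_i$ to be directly indecomposable, the decomposition lemma from \cite{M90a} (which the paper in effect reproves by hand via the idempotents $e_i$, setting $M_i=e_iM$ and $N_i=\mathrm{im}(f_i)$) yields $f^F=f^F_1\oplus\cdots\oplus f^F_n$ with $P(f^F_i)=P_i$, and Lemma~\ref{nilrep:lem} supplies the field of representatives in the equicharacteristic case. Your faithfulness argument showing that $m\cdot 1_{P}$ is zero in the ring, not merely as an operator, is in fact spelled out more carefully than in the paper, which only remarks that boundedness of $M$ forces the residue fields to have finite characteristic.
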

\begin{proof}
Firstly note that $C(f)$ is definable in $f$. So indeed $f':M_B/C(f_B)\times M_B/C(f)\to N$ is interpretable in $f$ and so $\omega$-stable. So to simplify the notation of this proof we assume that $f:M\times M \to N$ is an $\omega$-stable non-degenerate bilinear map of abelian groups of bounded exponent.  In particular $P(f)$ is an $\omega$-stable scalar ring. So by Theorem~\ref{CR:stable}, $P=P(f)$ admits a unique decomposition $P=P_1\times \cdots \times P_n$ into indecomposable ideals (each of which is a ring with a unit $e_i$). Each $P_i$ is a local ring with the nilpotent maximal ideal $J_i$ and the field $F_i=P_i/J_i$ is algebraically closed or finite. Here since $M$ is assumed to be of bounded order all fields have to be of finite characteristic.
 
 Define $M_i=e_iM=P_iM$ and let $f_i$ be the restriction of $f$ to $M_i\times M_i$. Note that by definition $P$ has a well-defined action on $im(f)$ even though $im(f)$ might not be definable in $N$. If we set $N_i=im(f_i)$ then $im(f)=N_1\oplus\cdots \oplus N_m$ and $f^F_i:M_i\times M_i\to N_i$ is $P_i$-bilinear. Therefore for $f^F:M\times M\to im (f)$ we have, 
 $$f^F \cong f^F_1 \oplus \cdots \oplus f^F_m,$$
 where $P(f_i)=P((f^F)_i)=P_i$.  Moreover by Lemma~\ref{nilrep:lem} if $char(A_i)=char(F_i)$ then $P_i$ contains a field of representatives of $F_i$. Hence, in that case $f^F_i$ will be an $F_i$-bilinear map. Otherwise in general $f^F_i$ is just $P_i$-bilinear.\end{proof}

\subsection{Bilinear maps of finite Morley rank}\label{fmr:bilin:sec}
Here we discuss bilinear maps of finite Morley rank. Our plan follows the $\omega$-stable case. Again regarding the subscripts $B$, $C$, $D$, $T$ and $\Q$ for abelian groups and bilinear maps we observe the convention we made at the beginning of Section~\ref{st:bilin:sec}.  

As a direct corollary of Proposition~\ref{InDec:stablebilin:prop} we get:
\begin{cor}\label{bilin:fmr:cor} Let $f:M\times M\to N$ be a bilinear mapping of finite Morley rank. Then $f= f_D + f_C$ where both $f_D$ and $f_C$ are definable in $f$ and consequently they are both of finite Morley rank. 
\end{cor}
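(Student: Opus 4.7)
My plan is to derive Corollary~\ref{bilin:fmr:cor} as an immediate consequence of Proposition~\ref{InDec:stablebilin:prop} together with the basic fact that interpretability preserves finiteness of Morley rank. The core observations are that finite Morley rank implies $\omega$-stability, and that the multi-sorted structures $f_D$ and $f_C$ associated to the decomposition are absolutely interpretable in $f$.

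First I would note that, since $f$ has finite Morley rank, it is in particular $\omega$-stable, so Proposition~\ref{InDec:stablebilin:prop} applies and yields a decomposition $f=f_D+f_C$ with $f_D:M_D\times M_D\to N_D$ a bilinear map of divisible abelian groups and $f_C:M_C\times M_C\to N_C$ a bilinear map of abelian groups of bounded exponent. Moreover, that proposition asserts that both $f_D$ and $f_C$ are absolutely definable in $f$: this uses that the divisible part $M_D$ and the bounded-exponent part $M_C$ are absolutely definable in the $\omega$-stable abelian group $M$ by Theorem~\ref{Mac:stable:tfa}, and similarly for $N_D$ and $N_C$ inside $N$, while the predicates describing $f_D$ and $f_C$ are just restrictions of the predicate $\delta$ defining $f$ to these definable subsorts.

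Second, I would invoke the standard model-theoretic fact that any structure interpretable in a structure of finite Morley rank is itself of finite Morley rank (the Morley rank of the interpreted structure being bounded by a function of the parameters of the interpretation). Applying this to the multi-sorted structures $\mathfrak{U}(f_D)=\langle M_D,N_D,\delta\!\restriction_{M_D\times M_D}\rangle$ and $\mathfrak{U}(f_C)=\langle M_C,N_C,\delta\!\restriction_{M_C\times M_C}\rangle$, both of which are absolutely interpretable in $\mathfrak{U}(f)$, we conclude that $f_D$ and $f_C$ each have finite Morley rank.

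There is no real obstacle here; the only mild care is in observing that ``definable in $f$'' should be understood as interpretable as a multi-sorted bilinear-map structure, which is exactly what Proposition~\ref{InDec:stablebilin:prop} provides. Everything else is a routine transfer of the $\omega$-stable statement to the finite Morley rank setting.
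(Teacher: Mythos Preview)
Your proposal is correct and matches the paper's approach exactly: the paper simply introduces this corollary with the phrase ``As a direct corollary of Proposition~\ref{InDec:stablebilin:prop} we get,'' and your argument is precisely the unwinding of that one-line justification (finite Morley rank $\Rightarrow$ $\omega$-stable, apply the proposition, then transfer finite Morley rank along the definability).
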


 A full bilinear map $f:M\times M \to N$ is said to {\em have finite width} if there is a natural number $s$ so that for every $z\in N$, there are $(x_i,y_i)\in M\times M$, $i=1, \ldots , s$ such that
$$z=\sum_{i=1}^s f(x_i,y_i).$$ 
\begin{cor}\label{bilin:fwidth} If the bilinear map $f:M\times M\to N$ is of finite width then $im(f)$ is definable in $f$.\end{cor}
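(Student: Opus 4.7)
The plan is essentially to observe that finite width is exactly the syntactic condition needed to render $im(f)$ first-order definable in $\fra{U}(f)=\langle M, N, \delta\rangle$. Recall that the additive groups $M$ and $N$ are part of the structure of $\fra{U}(f)$, and the predicate $\delta$ captures the graph of $f$; bilinearity forces $-f(x,y)=f(-x,y)$, so the additive subgroup generated by $\{f(x,y): x, y\in M\}$ coincides with the set of finite sums $\sum_{i=1}^{k} f(x_i,y_i)$ as $k$ ranges over $\N$.

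The key point is that $im(f)$ was defined as the submodule generated by the values of $f$, and by the remark above this is the additive closure of those values. In general, $im(f)=\bigcup_{k\in\N}\{\sum_{i=1}^{k}f(x_i,y_i): x_i,y_i\in M\}$ is only a countable union of definable sets and hence need not be first-order definable. The finite-width hypothesis removes precisely this obstacle: if a uniform bound $s$ exists then the union stabilizes at $k=s$.

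The formal argument then amounts to writing down the formula
$$\varphi(z)\;:\;\exists x_1,\ldots,x_s,y_1,\ldots,y_s\in M\;(\,z=\textstyle\sum_{i=1}^{s}f(x_i,y_i)\,),$$
which is a legitimate first-order formula in the language of $\fra{U}(f)$, and verifying that $\varphi(\fra{U}(f))=im(f)$. The inclusion $\varphi(\fra{U}(f))\subseteq im(f)$ holds by definition of $im(f)$, while the reverse inclusion is exactly the finite-width hypothesis applied to an arbitrary element of $im(f)$. Since the argument is purely a matter of unwinding definitions, there is no real obstacle here; the only substantive point is to recognize that the notion of finite width has been tailored precisely so that this definability statement is available.
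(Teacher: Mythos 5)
Your proof is correct and is exactly the routine unwinding the paper has in mind (the paper states this corollary without proof, treating it as immediate from the definition of finite width). The only points worth having spelled out — that the $R$-submodule generated by the values of $f$ collapses to the set of $s$-fold sums because $rf(x,y)=f(rx,y)$ and $-f(x,y)=f(-x,y)$, and that shorter sums are covered by padding with $f(0,0)=0$ — are both handled or implicit in your argument.
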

%\begin{prop}[\cite{M90b}, Theorem 2.1] Assume $f: M\times M \to N$ is full nondegenerate bilinear map of finite width with a finite complete system. Then the structure $\mathfrak{U}_P(f)=\langle P,M,N,\delta,s_M,s_N\rangle,$ where $P=P(f)$ is absolutely interpretable in $\mathfrak{U}(f)=\langle M,N, \delta\rangle.$  \end{prop}
\begin{prop} \label{Div:fmrBilin:prop} Assume $f:M_D\times M_D\to N_D$ is a bilinear map of divisible abelian groups. Then the following are equivalent:   
\begin{enumerate}
\item $f$ is of finite Morley rank.
\item $f=f_{\Q}\oplus f^0$ where $f_{\Q}: M_{\Q}\times M_{\Q}\to N_{\Q}$ is a foundation of $f$ and $\Q$-bilinear, and $f^0$ is an addition of $f$. Both maps $f_\Q$ and $f^0$ have finite Morley rank. 
\end{enumerate}\end{prop}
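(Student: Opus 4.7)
The plan is to handle the two implications separately.

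For $(2)\Rightarrow(1)$, I would observe that the decomposition $f=f_\Q\oplus f^0$ yields an absolute interpretation of $\mathfrak{U}(f)$ in the two-sorted disjoint union of $\mathfrak{U}(f_\Q)$ and $\mathfrak{U}(f^0)$: identify $M\cong M_F\oplus C(f)$, $N\cong im(f)\oplus N^0$, and express $\delta(x,y,z)$ by $\delta_\Q(\pi_F x,\pi_F y,\pi z)\wedge \pi_0 z=0$, using that $f^0$ is identically zero. Since finite Morley rank is preserved under interpretation and both $f_\Q$ and $f^0$ are assumed to have finite Morley rank, so does $f$.

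For $(1)\Rightarrow(2)$, finite Morley rank implies $\omega$-stability, so Proposition~\ref{Div:stableBilin:prop} already supplies the decomposition $f=f_\Q\oplus f^0$ with $f_\Q$ a $\Q$-bilinear foundation and $f^0$ an identically degenerate addition. The remaining task is to show that each of these pieces has finite Morley rank, for which it is enough to exhibit them as interpretable in $f$. The two-sided kernel $C(f)$ is definable in $M$: it is the intersection of the definable subgroups $\{x:f(x,y)=f(y,x)=0\}$ indexed by $y\in M$, which is a finite intersection by the descending chain condition on definable subgroups in the $\omega$-stable group $M$. Thus $M/C(f)$ is interpretable in $f$.

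The core technical step is to establish that $im(f)$ is also definable in $f$, which by Corollary~\ref{bilin:fwidth} amounts to showing $f$ has finite width. Consider the ascending chain of definable symmetric subsets $B_n=\{\sum_{i=1}^n f(x_i,y_i):x_i,y_i\in M\}$ of $N$, whose union is the subgroup $im(f)$. By Zilber's indecomposability theorem applied inside the group $N$ of finite Morley rank, the subgroup generated by the definable set $B_1$ coincides with some finite iterate $B_s$, so $im(f)=B_s$ is definable. With both $C(f)$ and $im(f)$ definable in $f$, the foundation $f_\Q:M/C(f)\times M/C(f)\to im(f)$ is absolutely interpretable in $f$ and therefore of finite Morley rank, while the addition $f^0:C(f)\times C(f)\to N^0$ is interpretable via $N^0\cong N/im(f)$ and carries the identically zero bilinear predicate, so it too has finite Morley rank. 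The finite-width step is the only place requiring finite Morley rank rather than mere $\omega$-stability, and it is the main obstacle; a direct stabilization argument using connected components and bounded Morley degree on the chain of subgroups generated by $B_n$ would be an alternative to invoking Zilber if one wished to stay within the paper's stated methodology.
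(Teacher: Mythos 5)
Your proof is correct, but at the decisive step it takes a genuinely different route from the paper. Both arguments reduce $(1)\Rightarrow(2)$ to showing that $im(f)$ is definable in $f$ (definability of $C(f)$ and the interpretation of the two factors then being routine, exactly as you say), but you obtain definability of $im(f)$ from Zilber's indecomposability theorem, whereas the paper deliberately avoids that theorem (this is announced as a methodological point in the introduction). The paper instead interprets the largest scalar ring $P(f)$ in $f$ via Lemma~\ref{P(f)-inter:lem}, applies Theorem~\ref{CR:fmr} (Cherlin--Reineke) together with Cohen's theorem (Lemma~\ref{nilrep:lem}) to write $P(f)$ as a finite product of finite-dimensional algebras over algebraically closed fields of characteristic zero, and then uses Lemma~\ref{local-alg:lem} to conclude that $M_\Q$ is a finitely generated $P(f)$-module; finite width of $f_\Q$, hence definability of $im(f)$, follows. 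Your route is shorter and purely group-theoretic, but note that Zilber's theorem requires indecomposable generating sets, and the set $B_1=f(M,M)$ need not be indecomposable as written; the standard repair is to observe that $M=M_D$ is connected (a divisible abelian group of finite Morley rank has trivial finite divisible quotient by its connected component), so each $f(M,y)$ is a definable connected subgroup of $N$, and $im(f)=\langle f(M,y): y\in M\rangle$ is then a finite sum $f(M,y_1)+\cdots+f(M,y_s)$, which also yields finite width. What the paper's longer route buys is the explicit decomposition of $P(f)$ into finite-dimensional $k_i$-algebras, which is reused downstream (Proposition~\ref{Qbilin:fmr:prop}, Theorem~\ref{fmr-alg:mainthm}) and is not delivered by the indecomposability argument. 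Your $(2)\Rightarrow(1)$ direction matches the paper's.
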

\begin{proof} Proof of (1) $\Rightarrow$ (2) is similar to the proof of part (1) of Proposition~\ref{Div:stableBilin:prop}, except the statement regarding the Morley rank of $f_\Q$ and $f^0$, the proof of which will be rather indirect. Still similar to the proof of Proposition~\ref{Div:stableBilin:prop} $P(f)$ and its action on $M_\Q$ are interpretable in $M=M_D$. So $P=P(f)$ has finite Morley rank. Since $M_\Q$ is a $\Q$-vector space, $P(f)$ is a $\Q$-algebra. So by Theorem~\ref{CR:fmr} it admits a decomposition 
$$P=P_1\times \cdots \times P_n,$$ 
where each $P_i$, $i=1,\ldots n$, is an indecomposable Noetherian local ring with maximal ideal $J_i$ where $k_i=P_i/J_i$ is an algebraically closed field. Since $P$ is a $\Q$-algebra $k_i$ is of characteristic zero. So by Lemma~\ref{nilrep:lem} and Noetherity of the $P_i$, each of them is a finite dimensional $k_i$-algebra. This immediately proves that $f_\Q$ which is $P(f)$-bilinear is a bilinear map of finite width and therefore $im(f)=im(f_\Q)\leq N_D$ is definable in $f$ (or indeed in $N_D$ where $N_D$ inherits all the $f$-definable structure it could from $f$). This proves that $f_\Q$ is interpretable in $f$ and so of finite Morley rank. Moreover any complement $N^0$ of the divisible subgroup $im(f)$ of $N_D$ is interpretable in $f$, and so $f^0: C(f)\times C(f) \to N^0$ is interpretable in $f$ and has finite Morley rank too. 

(2) $\Rightarrow$ (1) is straightforward. Just recall that every bilinear map of torsion-divisible abelian groups is identically degenerate (See Equation~\eqref{tor-div:eqn} in the proof of Proposition~\ref{InDec:stablebilin:prop} for instance.).\end{proof}
\begin{cor} Assume $f:M\times M \to N$ is a full non-degenerate bilinear map. Then the following statements are equivalent.
\begin{enumerate}
\item $f$ has finite Morley rank.
\item $f=f_\Q\oplus f_C$ where both $f_\Q$ and $f_C$ have finite Morley rank.  
\end{enumerate}  
\end{cor}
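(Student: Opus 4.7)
The plan is to run the finite Morley rank analog of Corollary~\ref{st:fullnodeg:cor}, tracking rank through each reduction, with Proposition~\ref{Div:fmrBilin:prop} taking the place of Proposition~\ref{Div:stableBilin:prop}. The direction (2)$\Rightarrow$(1) is easy: once we have direct decompositions $M = M_\Q \oplus M_C$ and $N = im(f_\Q) \oplus im(f_C)$ compatible with the bilinear map, the structure $\fra{U}(f)$ is interpretable in the two-sorted combination of $\fra{U}(f_\Q)$ and $\fra{U}(f_C)$, and so $f$ has finite Morley rank whenever both summands do.

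For (1)$\Rightarrow$(2), I would first invoke Corollary~\ref{bilin:fmr:cor} to write $f = f_D + f_C$, where both components are definable in $f$ and hence of finite Morley rank. Non-degeneracy of $f$ gives $C(f)=0$, and the ``moreover'' clause of Proposition~\ref{InDec:stablebilin:prop} then forces $M_D \cap M_C \leq M_T \leq C(f) = 0$, upgrading $M = M_D + M_C$ to a direct sum. For the target, fullness of $f$ gives $N = im(f) = im(f_D) + im(f_C)$ with $im(f_D) \leq N_D$ and $im(f_C) \leq N_C$; once $im(f_D)$ is shown torsion-free in the next step, the intersection with the bounded-exponent piece $im(f_C)$ is trivial and $N$ also splits directly.

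It remains to refine the divisible component. The restriction $f_D$ inherits non-degeneracy: if $x \in M_D$ kills $f_D$ on both sides, then equation~\eqref{tor-div:eqn} also gives $f(x,y)=0$ for every $y \in M_C$, so $x \in C(f) = 0$. Thus $f_D$ is a bilinear map of divisible abelian groups of finite Morley rank, and Proposition~\ref{Div:fmrBilin:prop} applies to yield $f_D = f_\Q \oplus f^0$ with both parts of finite Morley rank, where $f^0$ is supported on $C(f_D) \times C(f_D) = 0$ and is therefore trivial. Hence $f_D = f_\Q$ is $\Q$-bilinear of finite Morley rank, $im(f_D)=im(f_\Q)$ is a $\Q$-vector space (so torsion-free, completing the previous step), and together with the finite Morley rank $f_C$ we obtain $f = f_\Q \oplus f_C$ with each summand of finite Morley rank.

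The main obstacle is the bookkeeping that upgrades Macintyre's non-direct decompositions to honest direct sums and then forces the ``addition'' summand arising from Proposition~\ref{Div:fmrBilin:prop} to collapse. Both steps are driven entirely by the non-degeneracy and fullness of $f$; once these are handled, the proof is a parallel transfer of Corollary~\ref{st:fullnodeg:cor} to the finite Morley rank setting.
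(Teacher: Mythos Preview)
Your proposal is correct and follows essentially the same approach as the paper: the paper's proof simply says that (1)$\Rightarrow$(2) is ``similar to the proof of Corollary~\ref{st:fullnodeg:cor}'' and that (2)$\Rightarrow$(1) follows because a direct sum of finite Morley rank structures has finite Morley rank, and your write-up is precisely the finite Morley rank transcription of that corollary's proof, with Proposition~\ref{Div:fmrBilin:prop} in place of Proposition~\ref{Div:stableBilin:prop}. The only cosmetic difference is that the paper deduces $f_D=f_\Q$ in one line from $M_T=0$ (so $M_D=M_\Q$), whereas you route this through Proposition~\ref{Div:fmrBilin:prop} and observe that the addition $f^0$ collapses because $C(f_D)=0$; both arrive at the same place.
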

\begin{proof} Proof (1.)$\Rightarrow$ (2.) is similar to the proof of Corollary~\ref{st:fullnodeg:cor}. (2.)$\Rightarrow$ (1.) simply follows from the fact that direct sum of two structures of finitely Morley rank has finite Morley rank.\end{proof}
\begin{prop}[\cite{M90b}, Theorem 5.4] \label{Qbilin:fmr:prop} Assume $f:M_\Q\times M_\Q \to N_\Q$ is a bilinear map of torsion-free divisible abelian groups. Then the following statements are equivalent.
\begin{enumerate}
\item $f$ has finite Morley rank.
\item $f$ decomposes into $f=f_1 \oplus \cdots \oplus f_n \oplus f^0$. Each factor is definable in $f$ and has finite Morley rank. Each $f_i$, $i=1,\ldots n$ is a directly indecomposable $k_i$-bilinear mapping of finite dimensional $k_i$-spaces for some algebraically closed field $k_i$ of characteristic zero and $f^0$ is an identically degenerate $\Q$-bilinear map. 
\end{enumerate}\end{prop}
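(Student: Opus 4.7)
The plan is to dispatch (2)$\Rightarrow$(1) quickly and concentrate on (1)$\Rightarrow$(2), refining the strategy already used in the proof of Proposition~\ref{Div:fmrBilin:prop}. The whole argument is organized around the largest scalar ring of the foundation $f^F\colon M_\Q/C(f)\times M_\Q/C(f)\to im(f)$, which is non-degenerate and full by construction and is interpretable in $f$ since $C(f)$ is first-order definable. By Lemma~\ref{P(f)-inter:lem}, $P(f^F)$ is then interpretable in $f$ and inherits finite Morley rank.

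For (1)$\Rightarrow$(2), I first note that $M_\Q$ and $N_\Q$ are $\Q$-vector spaces, so $f$ is automatically $\Q$-bilinear, and finite Morley rank implies $\omega$-stability. Proposition~\ref{QDiv:stableBilin:prop} then produces a decomposition $f=f_1\oplus\cdots\oplus f_n\oplus f^0$ into directly indecomposable $k_i$-bilinear factors $f_i$ and an identically degenerate $\Q$-bilinear addition $f^0$. The remaining task is to upgrade this decomposition to a definable one and to bound the $k_i$-dimensions of the spaces carrying the $f_i$.

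Applying Theorem~\ref{CR:fmr} to $P:=P(f^F)$ yields $P=P_1\times\cdots\times P_n$ with each $P_i$ a local Noetherian ring whose maximal ideal $J_i$ is nilpotent and whose residue field $k_i$ is algebraically closed of characteristic zero. Lemma~\ref{nilrep:lem} supplies a field of representatives $k_i\subseteq P_i$, and Noetherianity together with nilpotence of $J_i$ forces $\dim_{k_i}P_i<\infty$. The primitive idempotents $e_1,\ldots,e_n\in P$ are then definable in $f$ and cut out definable $P_i$-submodules $M_i=e_iM_F$ and $N_i=e_i\cdot im(f)$, furnishing a definable decomposition of $f^F$. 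Finite dimensionality of each $P_i$ over $k_i$ ensures that $f^F$ has finite width, so by Corollary~\ref{bilin:fwidth} the image $im(f)$ is definable; a complement $N^0$ is then interpretable, and the identically degenerate $f^0\colon C(f)\times C(f)\to N^0$ becomes definable as well.

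The main obstacle is the finite-dimensionality bound $\dim_{k_i}M_i<\infty$. I would handle it by observing that each $M_i$ is an interpretable $k_i$-vector space sitting inside a structure of finite Morley rank; since $k_i$ is an interpretable infinite algebraically closed field, an infinite dimensional $k_i$-space would contain $k_i^m$ for every $m$ and would therefore carry unbounded Morley rank, contradicting the hypothesis. Hence $\dim_{k_i}M_i<\infty$, and consequently $\dim_{k_i}N_i<\infty$, giving each $f_i$ the required structure. Finally (2)$\Rightarrow$(1) is immediate: a $k_i$-bilinear map on finite dimensional $k_i$-spaces is interpretable in the algebraically closed field $k_i$ and so has finite Morley rank, $f^0$ is interpretable from its additive data alone, and a finite direct sum of structures of finite Morley rank has finite Morley rank.
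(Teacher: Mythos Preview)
The paper does not supply its own proof here; the proposition is simply quoted as \cite{M90b}, Theorem~5.4. Your argument follows exactly the template the paper uses for the neighbouring Proposition~\ref{Div:fmrBilin:prop}, and it is essentially correct.

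Two points deserve tightening. First, there is a logical ordering slip: you deduce finite width of $f^F$ from $\dim_{k_i}P_i<\infty$, but finite dimensionality of the scalar rings alone does not bound the width; what does is $\dim_{k_i}M_i<\infty$ (then $im(f_i)$ is $k_i$-spanned by the finitely many values $f(u_l,u_m)$ on a basis, giving an explicit width bound). You only establish $\dim_{k_i}M_i<\infty$ in the following paragraph, so the finite-width claim and the definability of $im(f)$ should come after it. Second, the phrase ``each $M_i$ is an interpretable $k_i$-vector space'' needs care: the field of representatives $k_i\subseteq P_i$ furnished by Lemma~\ref{nilrep:lem} is not in general a definable subring of $P_i$, so the $k_i$-action on $M_i$ through that copy is not obviously interpretable. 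The clean route is precisely Lemma~\ref{local-alg:lem}: the quotient $k_i=P_i/J_i$ and its action on each layer $J_i^jM_i/J_i^{j+1}M_i$ \emph{are} interpretable, so an infinite-dimensional layer would contradict finite Morley rank; hence $M_i$ is a finitely generated $P_i$-module and therefore finite-dimensional over $k_i$. With these two adjustments your argument is complete and matches the strategy the paper employs elsewhere.
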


The following statement is an immediate corollary of propositions~\ref{Div:fmrBilin:prop} and \ref{Qbilin:fmr:prop}.
\begin{cor}\label{Div:fmrBilin:cor} Assume $f:M_D\times M_D\to N_D$ is a bilinear map of divisible abelian groups. Then the following are equivalent:   
\begin{enumerate}
\item $f$ is of finite Morley rank.
\item $f$ decomposes into $f=f_1 \oplus \cdots \oplus f_n \oplus f^0$. Each factor is definable in $f$ and has finite Morley rank. Each $f_i$, $i=1,\ldots n$ is a directly indecomposable $k_i$-bilinear mapping of finite dimensional $k_i$-spaces for some algebraically closed field $k_i$ of characteristic zero and $f^0$ is an identically degenerate bilinear map of divisible abelian groups. 
\end{enumerate}\end{cor}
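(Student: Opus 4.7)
The plan is to combine the two propositions cited, using Proposition~\ref{Div:fmrBilin:prop} as the first-stage decomposition that splits off the identically degenerate piece coming from the two-sided kernel, and Proposition~\ref{Qbilin:fmr:prop} to further decompose the torsion-free divisible foundation into its algebraically closed field components.

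For the implication $(1)\Rightarrow (2)$, I would first apply Proposition~\ref{Div:fmrBilin:prop} to the finite Morley rank bilinear map $f:M_D\times M_D\to N_D$. This yields a decomposition $f = f_\Q\oplus f^0$ in which $f_\Q:M_\Q\times M_\Q\to N_\Q$ is a $\Q$-bilinear foundation, $f^0$ is an identically degenerate addition, and both factors are definable in $f$ and of finite Morley rank. In particular $M_\Q$ and $N_\Q$ are torsion-free divisible abelian groups, so $f_\Q$ falls exactly in the hypothesis of Proposition~\ref{Qbilin:fmr:prop}. Applying that proposition to $f_\Q$, I obtain a further decomposition $f_\Q = f_1\oplus\cdots\oplus f_n\oplus f^{0}_\Q$ where each $f_i$ is directly indecomposable $k_i$-bilinear on finite-dimensional $k_i$-spaces over an algebraically closed field $k_i$ of characteristic zero, $f^{0}_\Q$ is identically degenerate and $\Q$-bilinear, and each factor is definable in $f_\Q$ with finite Morley rank. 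Gluing the two identically degenerate pieces into a single identically degenerate bilinear map of divisible abelian groups $f'^0 = f^{0}_\Q \oplus f^0$ yields the desired decomposition $f = f_1\oplus\cdots\oplus f_n\oplus f'^0$; since definability and finite Morley rank are transitive through the two stages (each $f_i$ and $f^{0}_\Q$ are definable in $f_\Q$, which is in turn definable in $f$), all factors remain definable in $f$ and of finite Morley rank.

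For the reverse implication $(2)\Rightarrow (1)$, I would simply invoke the fact that a finite direct sum of bilinear maps each of finite Morley rank has finite Morley rank, which is standard (the Morley rank of a finite product of interpretable structures is bounded by the sum of the Morley ranks). There is essentially no obstacle here beyond bookkeeping.

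I do not anticipate a genuinely hard step: the content is entirely in the two cited propositions. The only small point requiring care is verifying that the addition $f^0$ appearing in Proposition~\ref{Div:fmrBilin:prop} and the identically degenerate $\Q$-bilinear factor $f^{0}_\Q$ arising inside $f_\Q$ can be amalgamated into a single identically degenerate bilinear map of divisible abelian groups, which is immediate because the direct sum of identically degenerate bilinear maps is identically degenerate and the sum of divisible groups is divisible.
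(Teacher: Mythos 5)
Your proposal is correct and follows exactly the route the paper intends: the paper gives no written argument beyond declaring the corollary an immediate consequence of Propositions~\ref{Div:fmrBilin:prop} and \ref{Qbilin:fmr:prop}, and your two-stage decomposition (split off the addition, then decompose the torsion-free foundation) together with merging the two identically degenerate pieces is precisely that argument spelled out. The reverse implication via finiteness of Morley rank for finite direct sums is also the standard step the paper relies on.
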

\begin{defn} Assume $v(x_1, \ldots , x_m)$ is a term (word) of the language of rings. The verbal ideal, $v(\bar{A})$, of a ring $A$ defined by $v(\bar{x})$ is the ideal of $A$ generated by all elements $v(\bar{a})$, $a_1,\ldots a_m\in A$. i.e.
$$\alpha \in v(\bar{A}) \Leftrightarrow \exists n \in \N \quad \alpha=\sum_{i=1}^n v(\ov{a_i}).$$\end{defn} 

For example if $I$ is an ideal of a ring $A$, then $I^2$ is the verbal ideal defined by the term $v(x,y)=x\cdot y$. A verbal ideal $v(\bar{A})$ defined by $v(\bar{x})$ is said to have \emph{finite width} if there is a number $n\in \N$ so that every element $\alpha$ of $v(\bar{A})$ can be written as a sum of no more than $n$ elements of the form $v(\ov{a_i})$, $i=1, \ldots , n$, $a_{i1}\in A, \ldots , a_{im} \in A$.  

Now assume $A$ is a commutative associative unitary Noetherian local ring, whose maximal ideal $J$ is nilpotent, say $J^n=0$, and the field of residues $A/J$ is $k$. Then we get a sequence of ideals 
$$A=J^0 > J > J^2 > \ldots >J^n=0$$
where each quotient $J^i/J^{i+1}$ is a $k$-vector space. The Noetherity condition implies that the dimension $dim_k(J^i/J^{i+1})$ of $J^i/J^{i+1}$ over $k$ is finite.
Define $$r_k(A)=\sum_{i=1}^n dim_k(J^i/J^{i+1}).$$ In general if $M$ is an $A$-module for a ring $A$ as above we define $$r_k(M)=\sum_{i=1}^n dim_k(J^{i-1}M/J^{i}M).$$   

\begin{lem}\label{local-alg:lem} Let $A$ be a local algebra whose unique maximal ideal $J$ is nilpotent and the field $k=A/J$ is algebraically closed. Consider the two-sorted structure $M_A=\langle M, A, s \rangle$, where $s$ is the predicate describing the action of $A$ on $M$. If $M_A$ is of finite Morley rank, then $r_k(M)\leq RM(M)$, where $RM(M)$ is the Morley rank of $M$, $M$ inheriting all the definable structure it possibly could from $M_A$. In particular $A$ is Noetherian and $M$ is a finitely generated module over $A$. \end{lem}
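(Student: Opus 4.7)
The plan is to exhibit a definable filtration of $M$ whose successive quotients are finite-dimensional $k$-vector spaces, then combine additivity of Morley rank on short exact sequences with the fact that the Morley rank of a $k$-vector space dominates its $k$-dimension.

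First, since $A$ is a sort of $M_A$ it is interpretable in $M_A$, so $A$ itself is a commutative local ring of finite Morley rank with algebraically closed residue field $k = A/J$. By Theorem~\ref{CR:fmr}~(2) the ring $A$ is Noetherian, hence $J$, and therefore each power $J^i$, is a finitely generated ideal. Writing $J^i = (a_1^{(i)}, \ldots, a_{r_i}^{(i)})$, the submodule
$$J^i M = \{\,s(a_1^{(i)},x_1) + \cdots + s(a_{r_i}^{(i)},x_{r_i}) : x_j \in M\,\}$$
is definable in $M_A$ with parameters from $A$, so the nilpotent chain
$$M = J^0 M \geq J M \geq J^2 M \geq \cdots \geq J^n M = 0$$
is a chain of $M_A$-definable subgroups of $M$.

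Each quotient $V_i := J^{i-1}M/J^i M$ is annihilated by $J$, so the $A$-action factors through $A/J = k$, giving $V_i$ the structure of a $k$-vector space that is interpretable in $M_A$. Since $M_A$ has finite Morley rank, so does the interpretable structure $V_i$; on the other hand, an infinite-dimensional vector space over an infinite field already has infinite Morley rank in the pure vector-space language, which forces $d_i := \dim_k V_i$ to be finite, and a finite-dimensional $k$-vector space over an algebraically closed $k$ has Morley rank exactly $d_i$ in that pure language. Because enriching the definable structure on a fixed set can only increase Morley rank (every splitting available in the poorer language is still available in the richer one), $RM(V_i) \geq d_i$ in the $M_A$-induced structure.

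Finally, I would apply additivity of Morley rank on definable short exact sequences of abelian groups to the filtration to obtain
$$RM(M) = \sum_{i=1}^{n} RM(V_i) \geq \sum_{i=1}^{n} d_i = r_k(M).$$
Noetherity of $A$ was already established. For finite generation of $M$, note that $V_1 = M/JM$ has finite $k$-dimension $d_1$, so is generated over $k$ by the images of some elements $m_1, \ldots, m_{d_1} \in M$; since $J$ is nilpotent, hence contained in the Jacobson radical of $A$, Nakayama's lemma gives $M = Am_1 + \cdots + Am_{d_1}$. The only delicate point in the argument is the monotonicity of Morley rank under enrichment of the definable structure on a given sort, which is immediate from the inductive definition; the rest is bookkeeping with standard facts on finite Morley rank groups and on Noetherian local rings with nilpotent maximal ideal.
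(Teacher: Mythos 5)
Your argument is correct, and it is essentially the argument the paper relies on: the paper's own ``proof'' of this lemma is a one-line deferral to Theorem~3.2 of Cherlin--Reineke, whose proof is exactly the filtration $M \geq JM \geq \cdots \geq J^nM = 0$ combined with additivity of rank on definable quotients; you have simply written that argument out in full. One sentence needs repair, though. An infinite-dimensional vector space over an infinite field does \emph{not} have infinite Morley rank ``in the pure vector-space language'': in the one-sorted language of $k$-modules such a space is strongly minimal, of Morley rank $1$, regardless of its dimension. What you need --- and what is actually available here --- is the two-sorted structure $\langle V_i, k, \cdot\rangle$ with $k=A/J$ present as a named sort; this is interpretable in $M_A$ (since $J$ is definable in the local ring $A$ as the set of non-units), and for any linearly independent $v_1,\dots,v_m \in V_i$ the definable injection $k^m \to V_i$, $(a_1,\dots,a_m)\mapsto \sum_j a_j v_j$, gives $RM(V_i)\geq m$ directly. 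This simultaneously forces $\dim_k V_i<\infty$ and yields $RM(V_i)\geq d_i$, with no appeal to ``monotonicity under enrichment'' needed. The remaining steps --- Noetherianity of $A$ from Theorem~\ref{CR:fmr}(2), hence definability of each $J^iM$ with parameters, the additivity $RM(M)=\sum_i RM(J^{i-1}M/J^iM)$ for definable subgroups of a group of finite Morley rank, and Nakayama (trivialized by nilpotence of $J$) for finite generation of $M$ --- are all sound.
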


\begin{proof} This is really a restatement of Theorem 3.2 from \cite{CR}. The proof of the referred theorem will go through without any substantial changes.\end{proof}

\begin{prop} \label{bilin:fmrB:mainprop} Assume $f:M_B\times M_B \to N_B$ is a bilinear map of finite Morley rank of bounded abelian groups. Then:
	
\begin{enumerate}
\item The foundation
$$f^F:M_B/C(f)\times M_B/C(f)\to im(f)$$ of $f$ decomposes into  
$$f^F=f^F_1\oplus \cdots \oplus f^F_m.$$
Each $f^F_i$, $i=1, \ldots, m$ is a directly indecomposable $P_i$-bilinear mapping, where each ring $P_i$ is an indecomposable ring of finite Morley rank.
\item The components  $f^F_i$ where $k_i=P_i/J_i$ is algebraically closed are $P_i$-bilinear maps of finitely generated $P_i$-modules where $P_i$ is Noetherian. In particular those components are definable in $f^F$. 
\end{enumerate} 
\end{prop}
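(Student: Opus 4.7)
The plan is to follow the strategy of Proposition~\ref{boundedstablebilin:prop}, replacing the $\omega$-stable Cherlin--Reineke theorem by its finite Morley rank counterpart and then extracting the additional algebraic finiteness from Lemma~\ref{local-alg:lem}. Since $C(f)$ is definable in $f$, the foundation $f^F$ is interpretable in $f$ and inherits finite Morley rank. By Lemma~\ref{P(f)-inter:lem}, $P=P(f^F)$ together with its action on $M_B/C(f)$ is interpretable in $f^F$, so $P$ is a commutative associative unitary ring of finite Morley rank. Theorem~\ref{CR:fmr}(1) then decomposes $P=P_1\times\cdots\times P_m$, where each $P_i$ is a definable indecomposable local ring of finite Morley rank with nilpotent maximal ideal $J_i$ and residue field $k_i=P_i/J_i$ that is either algebraically closed or finite.

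Let $e_1,\ldots,e_m$ be the orthogonal idempotents generating this decomposition and set $M_i=e_i\cdot(M_B/C(f))$ and $N_i=e_i\cdot im(f^F)$. Because each $e_i$ is a definable element of $P$ acting definably, both $M_i$ and $N_i$ are definable in $f^F$. The identity $f^F(e_ix,e_jy)=e_ie_jf^F(x,y)$ kills the cross terms and yields $f^F=f^F_1\oplus\cdots\oplus f^F_m$ with each $f^F_i:M_i\times M_i\to N_i$ a directly indecomposable $P_i$-bilinear map; this establishes part (1). For part (2), fix an index $i$ with $k_i$ algebraically closed. Theorem~\ref{CR:fmr}(2) then gives Noetherity of $P_i$. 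The two-sorted structure $\langle M_i,P_i,s\rangle$ is definable inside the two-sorted module $\langle M_B/C(f),P,s\rangle$ already interpreted in $f^F$, so it has finite Morley rank, and Lemma~\ref{local-alg:lem} applied to it yields $r_{k_i}(M_i)\le RM(M_i)<\infty$, whence $M_i$ is finitely generated over the Noetherian ring $P_i$. A bounded $P_i$-generating set $m_1,\ldots,m_n$ of $M_i$ forces every element of $N_i$ to be expressible, by $P_i$-bilinearity, as a sum of at most $n$ values of $f^F_i$, so $f^F_i$ has finite width; Corollary~\ref{bilin:fwidth} then makes $N_i$ definable in $f^F_i$, and combined with the definability of $M_i$ and of the $P_i$-action, $f^F_i$ itself becomes definable in $f^F$.

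The main obstacle is bridging Morley rank to honest algebraic finiteness. Part (1) is essentially a transcription of the $\omega$-stable proof, with Theorem~\ref{CR:fmr} taking the place of Theorem~\ref{CR:stable}. For part (2) one must verify that the idempotent-based splitting preserves finite Morley rank on each component, so that Lemma~\ref{local-alg:lem} genuinely applies to $\langle M_i,P_i,s\rangle$, and then lean on Noetherity of $P_i$ to promote the algebraic finite generation of $M_i$ to the logical statement that $f^F_i$ has finite width. It is this final upgrade — finite generation implying finite width, and hence, via Corollary~\ref{bilin:fwidth}, definability of the image $N_i$ inside $f^F_i$ — that is the technical heart of the argument, since without it each component $f^F_i$ would remain only interpretable rather than definable in $f^F$.
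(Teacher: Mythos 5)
Your proof is correct and follows the same route as the paper's: part (1) transcribes the $\omega$-stable argument of Proposition~\ref{boundedstablebilin:prop} with Theorem~\ref{CR:fmr} in place of Theorem~\ref{CR:stable}, and part (2) combines Lemma~\ref{local-alg:lem} with the finite-generation-implies-finite-width upgrade and Corollary~\ref{bilin:fwidth}, exactly as the paper does. The only slip is the claim in part (1) that $N_i=e_i\cdot im(f^F)$ is already definable --- $im(f^F)$ is not known to be definable at that stage (cf.\ the corresponding remark in the proof of Proposition~\ref{boundedstablebilin:prop}) --- but that claim is not needed for part (1) and you correctly re-derive definability via finite width in part (2).
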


\begin{proof}
The first statement is similar to the $\omega$-stable case (Proposition~\ref{boundedstablebilin:prop}). First part of statement (2) follows form (1) and Lemma~\ref{local-alg:lem}. For the components referred in (2), $im(f^F_i)$ is also  finitely generated over the corresponding scalar ring $P_i$ and so it is a bilinear map of finite width. The statement follows now. 
\end{proof}
The following corollary will be useful in our analysis of algebras of finite  Morley rank.

\begin{cor}\label{bilinfmr:fw:cor} Let $f:M\times M\to N$ be a  bilinear map of finite Morley rank, where all the residue fields $k_i=P_i/J_i$ of components $P_i$ of $P(f)$ are algebraically closed. Then the image $im(f)$ of $f$ is definable in $f$. \end{cor}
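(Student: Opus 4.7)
The plan is to show that $f$ has finite width; Corollary~\ref{bilin:fwidth} then gives definability of $im(f)$ for free. So the entire task reduces to producing a uniform bound $s$ so that every element of $im(f)$ is a sum of at most $s$ values of $f$.

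First I would split $f = f_D + f_C$ via Corollary~\ref{bilin:fmr:cor}, with both summands definable in $f$ and of finite Morley rank. Since $im(f) = im(f_D) + im(f_C)$, it suffices to bound the width of each summand separately. For the divisible piece $f_D$, Corollary~\ref{Div:fmrBilin:cor} decomposes $f_D = f_1 \oplus \cdots \oplus f_n \oplus f^0$, where each $f_i$ is $k_i$-bilinear on finite-dimensional $k_i$-vector spaces and $f^0$ is identically degenerate. Finite dimensionality gives finite width for each $f_i$ (bounded by the dimension of the corresponding image), while $im(f^0) = 0$, so $f_D$ has finite width.

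For the bounded-exponent piece $f_C$, I would pass to its foundation $f_C^F$, which has the same image as $f_C$, and invoke Proposition~\ref{bilin:fmrB:mainprop}. This yields a decomposition $f_C^F = f_1^F \oplus \cdots \oplus f_m^F$ with each $f_i^F$ being $P_i$-bilinear for some indecomposable scalar ring $P_i$ of finite Morley rank. The compatibility between the direct decomposition of a full non-degenerate bilinear map and that of its largest scalar ring (the lemma opening Section~\ref{bilin-scalar:sec}) ensures that the indecomposable components of $P(f_C^F)$ appear among those of $P(f)$, so the hypothesis transfers: every $P_i$ has algebraically closed residue field $k_i = P_i/J_i$. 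Part~(2) of Proposition~\ref{bilin:fmrB:mainprop} then applies to every $f_i^F$, making it a $P_i$-bilinear map on a finitely generated module over the Noetherian local ring $P_i$, hence of finite width. Summing, $f_C^F$ (and therefore $f_C$) has finite width, and combined with the $f_D$ bound this gives a uniform width bound for $f$.

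The main obstacle is the verification that the indecomposable components of $P(f_C^F)$ really sit inside the decomposition of $P(f^F) = P(f)$, so that the algebraic-closedness hypothesis forces the same property on $P(f_C^F)$ and rules out the ``finite residue field'' alternative in Proposition~\ref{bilin:fmrB:mainprop}. If some $P_i$ had a finite residue field, only the weaker conclusion $P_i$-bilinearity would be available, and there would be no reason for $f_i^F$ to act on a finitely generated module, so width control would fail. Once this bookkeeping is settled using the functoriality of $P(\cdot)$ under direct decompositions, the conclusion is immediate via Corollary~\ref{bilin:fwidth}.
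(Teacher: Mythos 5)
Your proof is correct and follows essentially the same route as the paper: the paper likewise reduces to finite width via Proposition~\ref{Qbilin:fmr:prop} and Proposition~\ref{bilin:fmrB:mainprop} and then invokes Corollary~\ref{bilin:fwidth}. You simply spell out the bookkeeping (the $f_D+f_C$ splitting, the passage to the foundation, and the matching of components of $P(f_C^F)$ with those of $P(f)$ via the direct-decomposition lemma for scalar rings) that the paper compresses into ``it should be clear.''
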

\begin{proof} It should be clear from Proposition~\ref{Qbilin:fmr:prop} and Proposition~\ref{bilin:fmrB:mainprop} that each component $f_i$ is of finite width. Therefore their direct sum is of finite width. This proves that $f:M\times M \to im(f)$ is a full bilinear map of finite width. This implies that $im(f)$ is definable in $f$.\end{proof} 
 
\section{$\omega$-stable algebras} \label{algebras:sec}
This section contains a rather complete analysis of the structure of $\omega$-stable rings (of finite Morley rank). Again similar to bilinear maps we do not give a single structure theorem but rather a collection of statements. We emphasis that rings (algebras) are not assumed to be associative, commutative or unitary (for example Lie algebras). We use the term ``algebra'' interchangeably with ``ring'' for these arbitrary rings. However the scalar rings are always assumed to be commutative, associative and unitary.

\subsection{Some preliminary facts on algebras}
 Consider an arbitrary ring $R$. Define the \emph{two-sided annihilator ideal (center)} of $R$ by $$Ann(R)=\{x\in R: xy=yx=0, \forall y\in R\}.$$ Let $R^2$ be the ideal of $R$ generated by all products $x\cdot y$ (or $xy$ for short) of elements of $R$. 

\subsubsection{Foundations and additions}

Consider a scalar ring $A$. An $A$-algebra $R$ is called \emph{regular} if $Ann(R)\leq R^2$. A maximal regular $A$-subalgebra $R_F$ of $R$ containing $R^2$ is called a \emph{foundation} of $R$ and an \emph{addition} $R_0$ of $R$ is a direct complement (if it exists) of the $A$-submodule $R^2\cap Ann(R)$ in $Ann(R)$.
\begin{prop}[\cite{M90c}, Proposition 6] \label{fa:prop} For a $k$-algebra $R$, where $k$ is a field, the following hold.
\begin{enumerate}
\item One can always construct a foundation $R_F$ for $R$.
\item A proper subalgebra $R_F$ and a submodule $R_0\leq Ann(R)$ are a foundation and an addition of $R$ respectively if and only if $R=R_F\times R_0$.
\item Different foundations of $R$ are pairwise isomorphic, and so are different additions of $R$. 
\end{enumerate}
\end{prop}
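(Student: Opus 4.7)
The plan is to prove (1) and (2) constructively and derive (3) as a corollary; the heart of the argument is a ``one element at a time'' use of the maximality of a foundation.

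For (1) I would first pick a $k$-complement $R_0$ of $R^2\cap Ann(R)$ inside $Ann(R)$ and then a $k$-complement $R_F$ of $R_0$ inside $R$ containing $R^2$; both choices are possible because $R_0\cap R^2\subseteq R_0\cap(R^2\cap Ann(R))=0$. Since $R_F\cdot R_F\subseteq R^2\subseteq R_F$, the subspace $R_F$ is automatically a subalgebra, and the inclusion $R_0\subseteq Ann(R)$ forces $R^2=R_F^2$ together with the identity $R_F\cap Ann(R)=R^2\cap Ann(R)$. Combining these, $Ann(R_F)\subseteq R_F\cap Ann(R)\subseteq R^2=R_F^2$, so $R_F$ is regular. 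Maximality of $R_F$ follows because any regular $S$ with $R_F\subsetneq S\subseteq R$ splits as $R_F\oplus(S\cap R_0)$, and then $S\cap R_0\subseteq Ann(S)\cap R_0\subseteq S^2\cap R_0\subseteq R^2\cap R_0=0$.

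Part (2) reduces, after a standard bookkeeping, to the single identity $R=R_F+Ann(R)$ for every foundation $R_F$, because then $R=R_F+(R^2\cap Ann(R))+R_0=R_F+R_0$, and $R_F\cap R_0\subseteq R_F\cap Ann(R)=R^2\cap Ann(R)$ meets $R_0$ trivially. To establish this I first extract the sub-lemma $Ann(R_F)\subseteq Ann(R)$: for $z\in Ann(R_F)$ and $r\notin R_F$, the subalgebra $R_F+kr$ is not regular by maximality, producing some $w=w_F+r\in Ann(R_F+kr)\setminus R_F^2$; the equations $wR_F=R_F w=0$ unravel to $r\cdot r_F=-w_F\cdot r_F$ and $r_F\cdot r=-r_F\cdot w_F$ for all $r_F\in R_F$, and specializing $r_F=z$ kills the right-hand sides since $z\in Ann(R_F)$, giving $rz=zr=0$. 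Now assume for contradiction that $x\in R\setminus(R_F+Ann(R))$; the subalgebra $R_F+kx$ is not regular, so there is $w=w_F+x\in Ann(R_F+kx)$ with $w\notin R_F^2$, and in particular $w^2=0$ together with $wR_F=R_F w=wx=xw=0$. For any $r\in R$ I pass to the subalgebra $R_F+kw+kr=R_F+kx+kr$, which is also not regular, and extract $v=v_F+\alpha w+\beta r$ in its annihilator with $v\notin R_F^2$, forcing $(\alpha,\beta)\neq(0,0)$. Using $w^2=0$ and $wR_F=R_F w=0$, one computes $vw=\beta\,rw$ and $wv=\beta\,wr$, which dispose of the case $\beta\neq 0$ immediately. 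When $\beta=0$ and $\alpha\neq 0$, the equation $vR_F=0$ collapses to $v_F\cdot R_F=0$ and symmetrically $R_F\cdot v_F=0$, so $v_F\in Ann(R_F)\subseteq Ann(R)$ by the sub-lemma, and then $vr=0$ becomes $\alpha wr=-v_F r=0$; similarly $rw=0$. In every case $w\in Ann(R)$, whence $x=w-w_F\in R_F+Ann(R)$ contradicts our choice of $x$.

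Finally (3) is a corollary of (2): fix any addition $R_0$; by (2) every foundation $R_F$ satisfies $R=R_F\oplus R_0$, so the projection $R\to R/R_0$ restricts to an algebra isomorphism $R_F\cong R/R_0$, and composing these isomorphisms identifies any two foundations. Two additions are isomorphic as $k$-modules because both are vector-space complements of the same subspace $R^2\cap Ann(R)$ inside $Ann(R)$. The main obstacle in the whole argument is the upgrade from $w\in Ann(R_F+kx)$ to $w\in Ann(R)$, and specifically the $\beta=0$ subcase, where the sub-lemma $Ann(R_F)\subseteq Ann(R)$ is decisive; the rest is linear-algebra bookkeeping together with repeated invocation of the maximality of $R_F$.
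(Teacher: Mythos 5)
First, a caveat: the paper does not prove this proposition at all --- it is imported verbatim from \cite{M90c} (Proposition 6) and used as a black box, so there is no in-paper proof to compare yours against. Judged on its own terms, your argument for part (1), for the key identity $R=R_F+Ann(R)$ in part (2), and for part (3) is correct and complete. The construction of $R_F$ as a complement of a pre-chosen addition $R_0$, the chain $Ann(R_F)\subseteq R_F\cap Ann(R)=R^2\cap Ann(R)\subseteq R^2=R_F^2$, the sub-lemma $Ann(R_F)\subseteq Ann(R)$ obtained by testing against $Ann(R_F+kr)$, and the two-stage extraction of $w\in Ann(R)$ from $Ann(R_F+kx)$ and then $Ann(R_F+kx+kr)$ all check out; in particular the normalization of the $x$-coefficient of $w$ to $1$ is legitimate, since a $w$ with zero $x$-component would lie in $Ann(R_F)\subseteq R_F^2\subseteq S^2$, contradicting $w\notin S^2$. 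This is genuinely the hard content of the proposition, and it is the only direction the present paper actually uses (in the proof of Theorem~\ref{w-st-al:mainthm}, where the existence of a foundation and an addition is converted into the splitting $R=R_F\times R_0$).

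The gap is in the converse direction of (2), which you wave off as ``standard bookkeeping.'' Given $R=R_F\times R_0$ with $R_F$ a proper subalgebra and $R_0\leq Ann(R)$, one does get $R^2=R_F^2\subseteq R_F$ and $Ann(R)=\bigl(R_F\cap Ann(R)\bigr)\oplus R_0$; but to conclude that $R_0$ complements $R^2\cap Ann(R)$ and that $R_F$ is a foundation one needs $R_F\cap Ann(R)=Ann(R_F)\subseteq R_F^2$, i.e.\ the \emph{regularity} of $R_F$, and this does not follow from the stated hypotheses. For example, take $R=k^3$ with zero multiplication, $R_F=k^2$, $R_0=k$: then $R=R_F\times R_0$ and $R_0\leq Ann(R)=R$, yet the unique foundation of $R$ is $0$ and the unique addition is $R$ itself. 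So the ``if'' direction is false as literally stated; it becomes true --- by exactly your maximality argument from part (1) --- once one adds the hypothesis that $R_F$ is regular. You should either prove the converse under that extra hypothesis or record that the biconditional must be read with it; everything else in your write-up stands.
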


In general if $R$ is not a $k$-algebra over a field $k$ the situation is not as nice as above. In particular an addition $R_0$ may not exist. Even if an addition exists it may not split from $R$. We provide some examples below. 

\begin{exmp} \label{addition:exmp} In the following $(~,~)$ denotes the Lie bracket. So consider the nilpotent Lie ring $R$ with presentation:
$$R=\langle s, t, u : (s,u)=(t,u)=0, (s,t)=2u \rangle.$$
It is not hard to see that $Ann(R)=\langle u\rangle\cong \Z$, while $R^2\cap Ann(R)=\langle 2u\rangle\cong \Z$ does not split from $Ann(R)$, and therefore $R_0$ does not exist.

Next consider the Lie ring $S$ with the following presentation in the variety of nilpotent Lie rings of class 2:
$$S=\langle s,t,u,v: (s,v)=(t,v)=(u,v)=0, (3u,s)=(3u,t)=0, (s,t)=v\rangle.$$
Again it is not hard to see that $Ann(S)=\langle 3u,v\rangle$, $S^2=\langle v\rangle$, and $S_0=\langle3u\rangle$. Clearly $S$ does not split over $S_0$.  
\end{exmp}

\subsubsection{Largest ring of scalars $A(R)$} Let $R$ be an $A$-algebra where $A$ is a scalar ring. Here we only consider those algebras which are faithful with respect to the action of their rings of scalars. Let $\mu: A\to A_1$ be an inclusion of rings. We say that an $A$-algebra $R$ has an $A_1$-enrichment with respect to $\mu$ if $R$ is an $A_1$-algebra and $ \alpha r =\mu(\alpha)r$, $r\in R$, $\alpha \in A$.  
 
Assume $R$ is an $A$-algebra, where $A$ is an associative commutative unitary ring. Denote by $A(R)$ the largest, in the sense defined just above, commutative subring of $End_A(R/Ann(R))$ that satisfies the following conditions:
\begin{enumerate}
\item $R/Ann(R)$ and $R^2$ are faithful $A(R)$-modules.
\item The full non-degenerate bilinear mapping $f_F: R/Ann(R)\times R/Ann(R)\to R^2$ induced by the product in $R$ is $A(R)$-bilinear.
\item The canonical homomorphism $\eta:R^2 \to R/Ann(R)$ is $A(R)$-linear.
\end{enumerate}

\begin{prop}[\cite{M90c}, Proposition 8]\label{A(R)exists:prop} For any algebra $R$ the ring $A(R)$ is definable, it is unique, and does not depend on the choice of the initial ring of scalars.\end{prop}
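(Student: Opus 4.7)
My strategy is to construct $A(R)$ by bootstrapping from the machinery of Section~\ref{bilin-scalar:sec} applied to the canonical bilinear map associated with $R$. Observe first that $Ann(R)$ and $R^2$ are intrinsically attached to $R$ — the annihilator by an obvious first-order formula, and $R^2$ as the ideal generated by products — so the quotient $R/Ann(R)$, the subgroup $R^2\subseteq R$, the induced full non-degenerate bilinear map $f_F:R/Ann(R)\times R/Ann(R)\to R^2$, and the canonical homomorphism $\eta:R^2\to R/Ann(R)$ are all canonically associated with $R$ and do not reference the choice of any initial scalar ring $A$.

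Next I would apply the construction of Section~\ref{bilin-scalar:sec} to $f_F$ to obtain the commutative unital ring $P(f_F)$, which acts faithfully on $R/Ann(R)$ (and, via fullness of $f_F$, on $im(f_F)=R^2$) and makes $f_F$ bilinear. I would then set
$$A(R)=\{\alpha\in P(f_F): \alpha\cdot\eta(z)=\eta(\alpha\cdot z)\text{ for all }z\in R^2\},$$
i.e., the subring of $P(f_F)$ of elements whose induced actions on $R/Ann(R)$ and on $R^2$ commute with $\eta$. This subset is clearly closed under addition and composition, contains the identity, and inherits commutativity from $P(f_F)$, so it is a commutative subring of $End(R/Ann(R))$ satisfying conditions (1)--(3) of the definition by construction.

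For maximality and uniqueness, suppose $B$ is any commutative subring of $End_A(R/Ann(R))$ satisfying conditions (1)--(3). Condition (2) says that $f_F$ is $B$-bilinear, so by the very maximality property defining $P(f_F)$ as the largest scalar ring making $f_F$ bilinear, we get $B\subseteq P(f_F)$. Condition (3) then places $B$ inside $A(R)$. Hence $A(R)$ is the unique largest such subring. Independence from $A$ is immediate: the construction above never invoked $A$, only the additive group and multiplication of $R$; any particular initial scalar ring acts in a way that satisfies (1)--(3) and so embeds into $A(R)$, but $A(R)$ itself is intrinsic to $R$.

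The main obstacle is the definability claim in its strongest reading. By construction $P(f_F)=\bigcap_n Z_n(f_F)$ is a priori only a type-definable object — a descending intersection of first-order definable subrings of $End(R/Ann(R))$ — and the compatibility condition with $\eta$ cutting out $A(R)$ is a further first-order condition. So \emph{definable} here should be understood as \emph{canonically interpretable}: the underlying set and the ring operations of $A(R)$ are canonically obtained from $R$ via the interpretation of $f_F$ and $\eta$ in $R$, uniformly in $R$. In the $\omega$-stable setting treated elsewhere in the paper, the descending chain condition collapses the intersection to a single $Z_n$, turning this into honest first-order definability (cf.\ Lemma~\ref{P(f)-inter:lem}); in the general case one retains the canonical construction and the uniqueness/maximality argument above.
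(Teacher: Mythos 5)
The paper offers no proof of this proposition --- it is imported verbatim from \cite{M90c}, Proposition 8 --- so there is nothing internal to compare against. Your reconstruction, building $P(f_F)$ for the canonical full non-degenerate map $f_F:R/Ann(R)\times R/Ann(R)\to R^2$ and then cutting out the subring of elements compatible with $\eta$, is the natural route and is consistent with how $A(R)$ is actually computed later in the paper (in the proof of Lemma~\ref{A(R):int:lem}, $A(R)=\bigcap_n L_n$ with $L_n$ the $\eta$-compatible part of $Z_n(f')$).

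The one step you cannot wave at is the maximality clause, and it is where the entire content of the uniqueness claim sits. You write that $B\subseteq P(f_F)$ follows ``by the very maximality property defining $P(f_F)$,'' but $P(f_F)$ is \emph{defined} as $\bigcap_n Z_n(f_F)$ inside $Z(f_F)\leq Sym_{f_F}$; its maximality among rings of scalars is a theorem, not a definition, and the paper explicitly declines to use it (``assuming certain conditions on $f$ \dots\ we do not use the maximality in this paper''). A commutative subring $B$ of symmetric endomorphisms need not a priori lie in $Z(f_F)$, since $Z(f_F)$ requires commuting with \emph{all} symmetric endomorphisms, not just with $B$ itself. So you must argue: if $B$ satisfies (1)--(3), then condition (2) makes each $\beta\in B$ symmetric with a compatible action on $R^2$, whence for any $A\in Sym_{f_F}$ one has $f_F(\beta Ax,y)=\beta f_F(Ax,y)=\beta f_F(x,Ay)=f_F(\beta x,Ay)=f_F(A\beta x,y)$, and non-degeneracy gives $\beta A=A\beta$, i.e.\ $B\leq Z(f_F)$; the well-definedness of the $\beta$-action on $R^2=im(f_F)$ then yields each $S_n(\beta)$, so $B\leq P(f_F)$. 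This computation genuinely uses that $f_F$ is full and non-degenerate, and it is also what substantiates the independence from the initial scalar ring $A$ (the $A$-action is by symmetric endomorphisms, so elements of $Z(f_F)$ computed over $\Z$ are automatically $A$-linear); ``immediate'' undersells it. Your caveat on definability is correct: in general $\bigcap_n Z_n$ is only an intersection of definable sets, and it collapses to a single $Z_t$ only under a chain condition, as in Lemma~\ref{P(f)-inter:lem} and Lemma~\ref{A(R):int:lem}.
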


\begin{prop}[\cite{M90c}, Proposition 9]\label{decomringalg:prop} Let $k$ be a field and $R$ a regular $k$-algebra. Then 
$R\cong R_1\times \cdots \times R_n$ for $k$-subalgebras $R_i$ if and only if $A(R)\cong A_1\times \ldots \times A_n$, where $A_i\cong A(R_i)$, $i=1,\ldots, n$.
\end{prop}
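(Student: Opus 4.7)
The plan is to translate direct product decompositions of $R$ into decompositions by central orthogonal idempotents in $A(R)$, exploiting the $A(R)$-action on $\bar R := R/Ann(R)$ and on $R^2$, together with the $A(R)$-bilinearity of the full non-degenerate multiplication $f_F: \bar R \times \bar R \to R^2$ defined by the product of $R$.

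For the forward implication, assume $R = R_1 \times \cdots \times R_n$. Then $Ann(R) = \prod Ann(R_i)$ and $R^2 = \prod R_i^2$, so $\bar R = \bigoplus \bar R_i$ and $f_F$ splits as $\bigoplus f_{F,i}$, with each $f_{F,i}$ still full and non-degenerate. I would then verify that every $\alpha \in A(R)$ must preserve this decomposition: for $x \in \bar R_i$, $j \ne i$, and $y \in \bar R_j$ we have $f_F(\alpha x, y) = \alpha f_F(x,y) = 0$ and, symmetrically, $f_F(y, \alpha x) = 0$, so the $\bar R_j$-component of $\alpha x$ lies in $C(f_{F,j}) = 0$. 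Hence the coordinate projections $\pi_i$ commute with every element of $A(R)$, preserve $f_F$ and $\eta$, and thus belong to $A(R)$ as central orthogonal idempotents, giving $A(R) = \bigoplus \pi_i A(R)$. A restriction/extension-by-zero argument, combined with the maximality built into the definition of $A(\cdot)$, then identifies each $\pi_i A(R)$ with $A(R_i)$.

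For the reverse implication, let $e_1, \ldots, e_n$ be the primitive orthogonal idempotents corresponding to $A(R) = A_1 \times \cdots \times A_n$. They decompose $\bar R = \bigoplus e_i \bar R$ and $R^2 = \bigoplus e_i R^2$, and $A(R)$-bilinearity yields $f_F(e_i x, e_j y) = e_i e_j f_F(x,y) = 0$ for $i \ne j$, so $f_F = \bigoplus f_{F,i}$. To lift this back to $R$, I use regularity of $R$: since $Ann(R) \le R^2$, there is a natural isomorphism $\bar R/\bar R^2 \cong R/R^2$, and over the field $k$ the sequence $0 \to R^2 \to R \to R/R^2 \to 0$ splits. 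Choosing a $k$-linear section compatible with the decomposition $R/R^2 = \bigoplus e_i\bar R / e_i \bar R^2$ produces $k$-subspaces $T_i \subset R$ lifting each summand, and $R_i := e_i R^2 \oplus T_i$ becomes a $k$-subalgebra (products of $R_i$ with itself land in $e_i R^2 \subseteq R_i$ via $f_{F,i}$, while $R_i \cdot R_j = 0$ for $i \ne j$). Hence $R = R_1 \times \cdots \times R_n$; applying the forward direction and matching the primitive idempotents of $A(R)$ on both sides then gives $A(R_i) \cong A_i$.

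The principal technical point is the compatible-splitting step in the reverse direction: one must choose the section of $R \twoheadrightarrow R/R^2$ so that it respects the $e_i$-decomposition of the target. This is routine because the $A(R)$-action on $\bar R$ descends to $R/R^2$, enabling an $e_i$-stable $k$-complement of $R^2$ in $R$ to be assembled summand by summand. The remaining identification $\pi_i A(R) \cong A(R_i)$ in the forward direction is the other place one must argue carefully, but it reduces to comparing the defining maximality properties of the two scalar rings on the $i$-th component.
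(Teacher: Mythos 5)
The paper does not actually prove this proposition; it is quoted from \cite{M90c} (Proposition 9), so there is no in-paper argument to compare against. Your proof is correct and follows what is clearly the intended route: the coordinate projections of a decomposition $R=R_1\times\cdots\times R_n$ are symmetric for $f_F$, commute with $A(R)$, act compatibly on $R^2$ and $\eta$, and hence lie in $A(R)$ by the maximality of Proposition~\ref{A(R)exists:prop} (faithfulness of the enlarged ring on $R^2$ is automatic from non-degeneracy of $f_F$); conversely the idempotents $e_i$ of $A(R)$ split $f_F$, and regularity ($Ann(R)\le R^2$, giving $\bar R/\bar R^2\cong R/R^2$ and ruling out degenerate components) is exactly what lets you lift this to a $k$-subalgebra decomposition of $R$. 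The two points you flag as delicate --- the identification $\pi_i A(R)\cong A(R_i)$ (restriction to $\bar R_i$ is injective and lands in $A(R_i)$ by maximality, while the componentwise action of $A(R_1)\times\cdots\times A(R_n)$ lands in $A(R)$ by maximality, forcing equality) and the choice of an $e_i$-compatible complement of $R^2$ in $R$ (take $T_i$ inside the preimage of $e_i\bar R$) --- both go through as you indicate.
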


\begin{prop}[\cite{M90c}, Proposition 10]\label{L-enrich:prop} Let $R$ be a $k$-algebra for a field $k$. Then any foundation of $R$ is an $L$-algebra for any ring $L$, $k\leq L \leq A(R)$.\end{prop}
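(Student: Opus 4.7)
The plan is to build an $L$-module structure on $R_F$ by combining the $L$-actions on $R^2$ and $R/Ann(R)$ that come from $L \subseteq A(R)$. First I would invoke Proposition~\ref{fa:prop} to write $R = R_F \times R_0$ with $R_0 \subseteq Ann(R)$. This immediately forces $R^2 = R_F^2$ and $Ann(R_F) = R_F \cap Ann(R)$; regularity of $R_F$ then yields $Ann(R_F) \subseteq R_F^2 = R^2$. The restriction to $R_F$ of the canonical projection $R \to R/Ann(R)$ induces a $k$-algebra isomorphism $R_F/Ann(R_F) \cong R/Ann(R)$ under which the multiplication on $R_F$ corresponds to $f_F$.

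Next, the defining conditions (1)--(3) of $A(R)$ supply an $L$-action on $R/Ann(R)$, an $L$-action on $R^2$, and the $L$-linearity of both $\eta$ and $f_F$. Transporting along the isomorphism above gives an $L$-action on $R_F/Ann(R_F)$, and the $L$-linearity of $\eta$ forces $Ann(R_F) = R^2 \cap Ann(R) = \ker(\eta|_{R^2})$ to be an $L$-submodule of $R^2$. To assemble these pieces into an action on $R_F$ itself, I would choose a $k$-linear section $\sigma \colon R_F/Ann(R_F) \to R_F$ (available because $k$ is a field), giving $R_F = Ann(R_F) \oplus \sigma(R_F/Ann(R_F))$ as $k$-spaces, and set
$$\alpha \cdot \bigl(a + \sigma(\bar{x})\bigr) \;=\; \alpha \cdot a \;+\; \sigma(\alpha \cdot \bar{x})$$
for $\alpha \in L$, $a \in Ann(R_F)$, $\bar{x} \in R_F/Ann(R_F)$. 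Because $\sigma$ is $k$-linear and the $L$-actions on $R^2$ and on $R/Ann(R)$ both restrict to the $k$-action, this manifestly produces an $L$-module structure on $R_F$ extending its given $k$-module structure.

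Finally I would verify $L$-bilinearity of the ring multiplication in $R_F$. For $x = a_x + \sigma(\bar{x})$ and $y = a_y + \sigma(\bar{y})$, the contributions involving $a_x$ or $a_y$ vanish against any element of $R_F$, so $xy = \sigma(\bar{x})\,\sigma(\bar{y})$ lies in $R^2$ and by construction of $f_F$ equals $f_F(\bar{x},\bar{y})$; the same reduction applied to $(\alpha x)y$ and $\alpha(xy)$ translates $L$-bilinearity of the product in $R_F$ directly into the known $L$-bilinearity of $f_F$. The main subtlety will be to keep straight the three environments in which $L$ acts --- $R^2$, $R/Ann(R)$, and (through the section $\sigma$) $R_F$ itself --- and to check that the action glued from the first two is consistent on the overlap $Ann(R_F) \hookrightarrow R^2$ and on the image of $R^2/Ann(R_F) \hookrightarrow R/Ann(R)$, which is exactly what the $L$-linearity of $\eta$ delivers.
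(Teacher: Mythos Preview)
The paper does not prove this proposition; it is quoted from \cite{M90c}, so there is no in-paper argument to compare against. I will therefore assess your proof on its own.

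Your strategy is the natural one, but there is a real gap in the bilinearity verification. With an \emph{arbitrary} $k$-linear section $\sigma$, the $L$-action you define on $R_F$ need not restrict to the given $L$-action on $R^2$. Indeed, for $w\in R^2$ write $w=a+\sigma(\bar w)$ with $a\in Ann(R_F)$; then $\sigma(\bar w)=w-a\in R^2$ as well, and
\[
\alpha\cdot_{R^2}w \;-\; \alpha\cdot_{\mathrm{new}}w \;=\; \alpha\cdot_{R^2}\sigma(\bar w)\;-\;\sigma(\alpha\bar w),
\]
which vanishes precisely when $\sigma|_{\overline{R^2}}$ is an $L$-module map into $R^2$. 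In your bilinearity check you correctly obtain $(\alpha x)y=f_F(\alpha\bar x,\bar y)=\alpha\cdot_{R^2}(xy)$, but $\alpha(xy)$ is computed with your \emph{new} action on $xy\in R^2$; hence bilinearity is equivalent to the two actions agreeing on all of $R^2$. The $L$-linearity of $\eta$ that you invoke only guarantees that the two actions induce the same action on the quotient $\overline{R^2}=R^2/Ann(R_F)$; it does \emph{not} say that the short exact sequence $0\to Ann(R_F)\to R^2\to\overline{R^2}\to 0$ of $L$-modules splits, which is what you actually need.

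The repair is to choose $\sigma$ so that $\sigma|_{\overline{R^2}}$ is an $L$-linear section of $\eta|_{R^2}$, and then extend $k$-linearly over a complement of $\overline{R^2}$ in $R_F/Ann(R_F)$. When $L$ is a field---and this is exactly how the proposition is applied in the proof of Theorem~\ref{w-st-al:mainthm}, with $L$ a field of representatives $k_i\subseteq A_i$---every short exact sequence of $L$-modules splits and your argument then goes through verbatim. For a general intermediate ring $L$ the existence of such a splitting is an additional point that your outline does not address.
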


\subsection{$\omega$-stable algebras: The characteristic zero case}\label{wstable-al:sec}

Again we remind that a ring $R$ has {\em characteristic zero} if its additive group $R^+$ is torsion-free. 

\begin{lem}\label{A(R):int:lem} Let $R$ be an $\omega$-stable ring of characteristic zero. Then:
\begin{itemize}
\item [1)]  $R$ is a $\mathbb{Q}$-algebra. 
\item [2)] The  two sorted structure $\langle A(R), R, s\rangle$ where $s$ describes the action of $A(R)$ on $R$ is absolutely interpretable in the pure ring $R$.
\end{itemize}
\end{lem}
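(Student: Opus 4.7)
For (1), the additive group $R^+$ is torsion-free (since $R$ has characteristic zero) and $\omega$-stable (since $R$ is), so Theorem~\ref{Mac:stable:tfa} yields $R^+ = R^+_D \oplus R^+_B$ with $R^+_B$ of bounded exponent. Torsion-freeness forces $R^+_B = 0$, so $R^+$ is torsion-free divisible, hence a $\mathbb{Q}$-vector space. The $\mathbb{Q}$-action commutes with multiplication: from $n(xy) = (nx)y = x(ny)$ and unique divisibility by $n$ in a torsion-free divisible group, one obtains $\tfrac{1}{n}(xy) = (\tfrac{1}{n}x)y = x(\tfrac{1}{n}y)$, making $R$ a $\mathbb{Q}$-algebra.

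For (2), the annihilator $Ann(R)$ is absolutely definable in $R$, so multiplication descends to an absolutely interpretable bilinear map $\bar\mu : R/Ann(R) \times R/Ann(R) \to R$ that is non-degenerate by the very definition of the annihilator. Being interpretable in the $\omega$-stable ring $R$, the map $\bar\mu$ is itself $\omega$-stable, and Lemma~\ref{P(f)-inter:lem} (which, as remarked explicitly after its statement, requires only non-degeneracy, not fullness) yields an absolute interpretation of $P(\bar\mu)$ together with its action on $R/Ann(R)$ inside $\bar\mu$, hence inside the pure ring $R$.

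The remaining task is to identify $P(\bar\mu)$ with $A(R)$ and to encode the required action as the predicate $s$. Both rings are by construction the maximal commutative subring of $\mathrm{End}(R/Ann(R))$ for which $\bar\mu$ (equivalently its foundation $f_F : R/Ann(R) \times R/Ann(R) \to R^2$) is bilinear: the schema $S_n$ defining $P(\bar\mu)$ expresses precisely $A(xy) \equiv (Ax)y \equiv x(Ay) \pmod{Ann(R)}$, which is condition (2) in the definition of $A(R)$; the induced rule $A \cdot \sum x_i y_i = \sum (Ax_i) y_i$ on $R^2$ is then forced by $S_n$, and this automatically yields faithfulness on $R^2$ (part of (1)) and $A(R)$-linearity of $\eta : R^2 \to R/Ann(R)$ (condition (3)). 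The predicate $s(\alpha, x, y)$ in $\langle A(R), R, s\rangle$ is interpreted by the absolutely definable relation $y - \alpha x \in Ann(R)$, reading the action of $\alpha$ on the class of $x$ in $R/Ann(R)$ via the already-obtained action of $P(\bar\mu)$ on the quotient.

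The main obstacle I anticipate is this identification $P(\bar\mu) = A(R)$: one must verify that every endomorphism of $R/Ann(R)$ satisfying $A(R)$'s defining conditions (1)--(3) satisfies every schema $S_n$, and conversely. Because $R^2$ need not be definable in $R$ a priori, one cannot appeal to a fullness-based shortcut; the argument must proceed through $\bar\mu$ landing in all of $R$, with the $A(R)$-action on $R^2$ emerging from the $S_n$ only a posteriori, which is exactly the content of $A(R)$'s defining conditions.
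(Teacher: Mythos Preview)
Your argument for (1) is fine and matches the paper's. The gap is in (2), precisely at the identification $P(\bar\mu)=A(R)$, which you flag as the main obstacle but then dismiss too quickly.

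The schemata $S_n$ do \emph{not} express $A(xy)\equiv (Ax)y\pmod{Ann(R)}$. What $S_n$ guarantees is that the rule $\alpha\cdot\sum x_iy_i:=\sum(\widetilde{\alpha\bar x_i})\,y_i$ is a well-defined action of $\alpha$ on $R^2$ (an equality in $R$, not modulo $Ann(R)$). So for $\alpha\in P(\bar\mu)$ and $z\in R^2$ there are \emph{two} a priori unrelated actions: the element $\alpha\cdot z\in R^2$ just described, and $\alpha\cdot\bar z\in R/Ann(R)$ given by applying the endomorphism $\alpha\in\mathrm{End}(R/Ann(R))$ to the class $\bar z$. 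Condition~(3) in the definition of $A(R)$ is exactly the compatibility $\overline{\alpha\cdot z}=\alpha\cdot\bar z$, and nothing in the construction of $P(\bar\mu)$ forces it. In general $A(R)$ is only a subring of $P(\bar\mu)$, so your interpretation of $P(\bar\mu)$ does not yet give $A(R)$.

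The paper handles this by cutting down further: for each $n$ one defines $L_n\subseteq Z_n(\bar\mu)$ to consist of those $\alpha$ satisfying $\alpha\bigl(\overline{\sum_{i=1}^n x_iy_i}\bigr)=\overline{\alpha\cdot\sum_{i=1}^n x_iy_i}$. Each $L_n$ is definable in the already-interpreted ring $Z(\bar\mu)$, and $A(R)=\bigcap_n L_n$. A \emph{second} application of the descending chain condition in the $\omega$-stable $Z(\bar\mu)$ then yields $A(R)=L_t$ for some finite $t$, completing the interpretation. Your proposal invokes DCC only once (inside Lemma~\ref{P(f)-inter:lem}) and misses this second pass.
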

\begin{proof} Firstly note that $R$ is a $\Q$-algebra by Theorem~\ref{Mac:stable:tfa}. Therefore the product in $R$ induces a $\Q$-bilinear mapping 

$$f:R \times R \to R, \qquad f(x,y)=xy, \quad \forall x,y \in R.$$

Let $f': R/Ann(R)\times R/Ann(R)\to R$, be the non-degenerate bilinear map induced by $f$. $f$ is absolutely interpretable in $A$, hence is $f'$. So $f'$ is $\omega$-stable. Recall the construction of $Z(f')$ (Equation~\eqref{Z(f)}) and its subrings $Z_n(f')$ (Equation~\eqref{zn}) associated to $f'$ from Section~\ref{bilin-scalar:sec}. By proof of (\cite{M90b}, Proposition 5.1) $Z_n(f')\leq Z(f')\leq Sym_{f'}(R/Ann(R))$ are all absolutely definable in $f'$ for every $n$ since $f'$ is $\omega$-stable. Indeed all these rings are interpretable in $R$. Now let $$^-: R \to R/ Ann(R)$$ be the canonical projection. By definition each element $\alpha\in Z_n(f')$ has a well defined action on any element $\ds z=\sum_{i=1}^nf'(\ov{x_i},\ov{y_i})=\sum_{i=1}^nf(x_i,y_i)$ of $R^2$ by setting 
$$\alpha\cdot z=\sum_{i=1}^nf'(\alpha\ov{x_i},\ov{y_i}).$$
Now, for each $n$ consider the sub-algebra $L_n$ of $Z_n(f')$ consisting of those $\alpha$ in $Z_n(f')$ where
$$\alpha\left(\overline{\sum_{i=1}^nf(x_i,y_i)}\right)=\overline{\left(\alpha\sum_{i=1}^nf(x_i,y_i)\right)}.$$
For each $n$, $L_n$ is definable in $Z_n(f')$, hence $L_n$ is definable in $Z(f')$. Note that 
$$A(R)= \cap_{i=1}^\infty L_n.$$ 
As $Z(f')$ is $\omega$-stable as a group interpretable in $R$ and the $L_n$ are definable in $Z(f')$, by the descending chain condition  
$$A(R)= \cap_{i=1}^\infty L_n=\cap_{i=1}^tL_t=L_t$$
for some finite $t$. This proves that $A(R)$ which is commutative associative unitary ring is interpretable in $R$.\end{proof}

\noindent \emph{Proof of Theorem~\ref{w-st-al:mainthm}.} 
By Lemma \ref{A(R):int:lem} the commutative associate unitary ring $A(R)$ is interpretable in $R$. Therefore $A(R)$ is an $\omega$-stable commutative associative unitary ring of characteristic zero and therefore by Theorem~\ref{CR:stable} it admits a decomposition 
$$A(R)\cong A_1\times \cdots \times A_n$$
where each $A_i$ is a local ring whose maximal ideal $J_i$ is nilpotent and the field of residues $k_i=A_i/J_i$ is a characteristic zero algebraically closed field. By Proposition~\ref{decomringalg:prop}, any foundation $R_F$ of $R$ admits a decomposition
$$R_F=R_1\times \cdots \times R_n$$ 
with $A(R_i)=A_i$. Since $R$ is a $\Q$-algebra we have $R=R_F\times R_0$ where $R_0$ is an addition of $R$ which is a $\Q$-algebra with zero multiplication.

As noted before, $A_i$ contains copies of $k_i=A_i/J_i$ called fields of representatives of $k_i$ and so by Proposition~\ref{L-enrich:prop} each $R_i$ is a $k_i$-algebra. 

\qed
\subsection{$\omega$-stable algebras: The general case}
We call a ring $R$ an \emph{A-quasi algebra} for a scalar ring $A$ if $R/Ann(R)$ and $R^2$ are $A$-algebras and the canonical homomorphism $\eta: R^2 \to R/Ann(R)$ is $A$-linear. We need to introduce this notion since in the general case additions and therefore foundations may not exist. Even when they exit there are examples where $R/Ann(R)$ and $R^2$ carry compatible $A$-algebra structures but $R$ itself does not admit any $A$-algebra structure compatible with those of $R/Ann(R)$ and $R^2$ (See~\cite{S}, Proposition~3.2.5).   

We say that $R$ is a \emph{central product} of subrings $R_1, \ldots ,R_n$ if $R=R_1+\cdots +R_n$, and $R_iR_j=R_jR_i=0$, if $i\neq j$. We denote a central product by $R=R_1\ast\cdots \ast R_n$. Clearly in a central product as above $R_i\cap R_j\leq Ann(R)$ if $i\neq j$.

%\begin{thm} Assume $R$ is an $\omega$-stable regular ring. Then $R$ admits the decomposition:
%$$R=Q\times S \times T,$$
%where,
%\begin{enumerate}
%\item $Q$ is a characteristic zero $\omega$-stable algebra and therefore admits the decomposition given in Theorem~\ref{w-st-al:mainthm}.
%\item $S$ is a ring with an additive group of bounded exponent. Moreover $S$ admits a decomposition
%$$S=S_1\times \cdots \times S_m,$$
%where each $S_i$ is an $B_i$-algebra, $B_i$ is a local ring with maximal idea $I_i$, and $B_i/I_i$ is a characteristic $p>0$ algebraically closed field.
%\item  $T$ is a ring with an additive group of bounded exponent. Moreover $T$ admits a decomposition
%$$T=T_1\times \cdots \times T_l,$$
%where each $T_i$ is an $C_i$-algebra, $C_i$ is a local ring with maximal idea $M_i$, and $C_i/M_i$ is a finite field. 
%\end{enumerate}\end{thm}
We say that a ring $R$ is a \emph{divisible ring} if its additive group $R^+$ is a divisible abelian group. Similarly we say that $R$ is \emph{bounded} if it is additive group $R^+$ has  finite exponent. 
\begin{prop}\label{wstable:aldecom:prop} An $\omega$-stable ring $R$ decomposes into $R=R_D \ast R_C$ where both $R_D$ and $R_C$ are absolutely definable in $R$, $R_D$ is a divisible subring and $R_C$ is a bounded subring.\end{prop}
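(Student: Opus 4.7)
The plan is to derive the decomposition directly from Macintyre's theorem (Theorem~\ref{Mac:stable:tfa}) applied to the additive group $R^+$, and then to upgrade the resulting additive subgroups to ideals by exploiting the interaction between divisibility and bounded exponent through the $\Z$-action $n\cdot(xy)=(nx)y=x(ny)$. Since $R^+$ is absolutely interpretable in $R$ and $\omega$-stable, we obtain $R^+ = M_D \oplus M_B = M_D + M_C$, where $M_D$ is the absolutely definable maximal divisible subgroup and $M_C$ is an absolutely definable subgroup of bounded exponent $e$.

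The first step is to identify $M_D$ as the ideal $eR$. Indeed, $eR = eM_D + eM_C = eM_D \subseteq M_D$ since $M_C$ has exponent $e$, while conversely $M_D = eM_D \subseteq eR$ by divisibility. Setting $R_D := M_D$, for any $x \in R_D$ write $x = ex'$ with $x' \in R_D$; then $xy = e(x'y) \in eR = R_D$ and symmetrically $yx \in R_D$, so $R_D$ is an absolutely definable two-sided ideal, and it is divisible as an additive subgroup.

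Next I will define $R_C := \{x \in R : ex = 0\}$. Since $e(xy) = (ex)y = x(ey)$, membership in $R_C$ is preserved under multiplication from either side, so $R_C$ is an absolutely definable two-sided ideal of exponent dividing $e$. Because $M_C \subseteq R_C$ by construction, the additive identity $R = M_D + M_C$ upgrades to $R = R_D + R_C$. Finally, to verify the central-product condition, take $x \in R_D$ and $y \in R_C$; writing $x = ex'$ yields $xy = e(x'y) = x'(ey) = 0$, and symmetrically $yx = 0$. Hence $R_D \cdot R_C = R_C \cdot R_D = 0$ and $R = R_D * R_C$ is the desired central-product decomposition.

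The only non-routine ingredient is recognizing the common witness $e$, namely the exponent of $M_C$, which simultaneously forces $R_D = eR$ and annihilates $R_C$; once this is in place, every required closure under multiplication and mutual orthogonality is an immediate consequence of the associativity of the $\Z$-action on $R$. I do not anticipate a serious obstacle: the work is entirely in translating the additive statement of Macintyre's theorem into the multiplicative setting, with no need to invoke the scalar ring $A(R)$ or any further model-theoretic machinery.
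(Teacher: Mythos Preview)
Your proof is correct and is essentially identical to the paper's own argument: both apply Macintyre's theorem to $R^+$, identify $R_D = eR$ and $R_C = \{x\in R : ex=0\}$ for $e$ the exponent of the bounded part, and then use the identity $e(xy)=(ex)y=x(ey)$ (from distributivity) together with divisibility of $R_D$ to check multiplicative closure and mutual annihilation. The only differences are cosmetic: you spell out $M_D = eR$ and phrase things in terms of ideals, while the paper speaks of subrings and is slightly terser.
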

\begin{proof} By Macintyre's Theorem \ref{Mac:stable:tfa}, there are subgroups $R_D$ and $R_C$ of the additive group $R^+$ of $R$ with the given properties. We recall that $R^+_D=mR^+$ and $R_C^+=\{x\in R: mx=0\}$ where $m$ is the exponent of $R_C^+$. This also clearly shows that both $R_D$ and $R_C$ are closed under the ring multiplication and so both are subrings of $R$. Also if $x\in R_D$ and $y\in R_C$ there exists $x'\in R_D$ such that $mx'=x$. Therefore
	$xy=x'(my)=0=(my)x'=yx$, which means they annihilate each other and clearly $R_D\cap R_C\leq Ann(R)$. Both $R_D$ and $R_C$ are $\omega$-stable since both are definable in $R$. \end{proof}
\begin{prop}\label{decom:wstable:prop} Assume $R$ is a divisible $\omega$-stable ring. Then $Ann(R)$ is divisible and both $R/Ann(R)$ and $R^2$ are torsion-free divisible. Moreover there exists an addition $R_0$ of $R$ and a corresponding foundation $R_F$ such that $R=R_F \times R_0$ and $R_F$ is a $\Q$-algebra.\end{prop}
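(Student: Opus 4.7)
The plan is to recognize the ring multiplication $f \colon R \times R \to R$, $f(x,y)=xy$, as an $\omega$-stable bilinear map of the divisible abelian group $R$ into itself and to invoke Proposition~\ref{Div:stableBilin:prop}. Under this identification $C(f)=Ann(R)$ and $im(f)=R^2$. That proposition, together with the arguments in its proof, delivers at once the first three assertions: $Ann(R)$ is divisible, $R/Ann(R)$ is torsion-free divisible, and $R^2$ is torsion-free divisible (the latter because $P(f)$ is a $\Q$-algebra acting faithfully on $im(f)$).

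Next I would verify that $R^2\cap Ann(R)$ is divisible, so that it splits off inside $Ann(R)$. Given $x\in R^2\cap Ann(R)$ and a positive integer $m$, divisibility of the torsion-free group $R^2$ yields a unique $y\in R^2$ with $my=x$; for any $z\in R$ the element $yz$ sits in $R^2$ and satisfies $m(yz)=xz=0$, so $yz=0$ (and symmetrically $zy=0$), whence $y\in R^2\cap Ann(R)$. Because $Ann(R)$ is divisible, its divisible subgroup $R^2\cap Ann(R)$ is a direct summand, so I may pick $R_0$ with $Ann(R)=(R^2\cap Ann(R))\oplus R_0$; this $R_0$ will be the addition.

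For the foundation I would build a complement of $Ann(R)$ in $R$ with a compatibility condition with $R^2$. First choose a $\Q$-subspace $W_0\subseteq R^2$ with $R^2=W_0\oplus (R^2\cap Ann(R))$, then extend $W_0$ to a $\Q$-subspace $W$ of $R$ with $R=W\oplus Ann(R)$ (possible because $R/Ann(R)$ is a $\Q$-vector space and $W_0\cap Ann(R)=W_0\cap R^2\cap Ann(R)=0$). Set $R_F:=W+(R^2\cap Ann(R))$. The choice $W\supseteq W_0$ forces $R^2\subseteq R_F$, whence $R_F\cdot R_F\subseteq R^2\subseteq R_F$ makes $R_F$ a subring, and the $\Q$-vector space structure on $W$ and on $R^2$ passes to $R_F$. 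The direct decomposition $R=R_F\oplus R_0$ follows from $R=W\oplus (R^2\cap Ann(R))\oplus R_0$, while $R_FR_0=R_0R_F=0$ holds because $R_0\subseteq Ann(R)$. Finally, any $x\in Ann(R_F)$ annihilates $R=R_F+R_0$, so $Ann(R_F)\subseteq Ann(R)\cap R_F=R^2\cap Ann(R)\subseteq R^2=R_F^2$, confirming that $R_F$ is regular; a short additional argument (if $R_F\subsetneq R_F'$ were regular with $R^2\subseteq R_F'$, the nontrivial $R_0$-component of any element of $R_F'\setminus R_F$ would lie in $Ann(R_F')\subseteq (R_F')^2\subseteq R^2$, contradicting $R_0\cap R^2=0$) shows $R_F$ is maximal, hence a foundation.

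The main obstacle is the subtle choice of $W$ in the third paragraph: a generic $\Q$-vector space complement of $Ann(R)$ in $R$ need not contain $R^2$, and without that containment neither the subring property nor the direct-product decomposition go through. Once $W$ is taken to extend a $\Q$-complement $W_0$ of $R^2\cap Ann(R)$ inside $R^2$, the remaining verifications are routine bookkeeping with divisible abelian groups and the bilinear-map machinery.
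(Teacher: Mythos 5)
Your proof is correct and follows essentially the same route as the paper's: both defer the divisibility and torsion-freeness claims to (the proof of) Proposition~\ref{Div:stableBilin:prop} applied to the multiplication map, both show $R^2\cap Ann(R)$ is divisible and split it off inside $Ann(R)$ to get the addition $R_0$, and both build the foundation as a complement of $R_0$ containing $R^2$. The only differences are cosmetic --- the paper assembles the complement by explicitly isolating the torsion subgroup $T$ (writing $R=Q\oplus S\oplus T\oplus R^2$) while you extend a complement $W_0$ of $R^2\cap Ann(R)$ in $R^2$ to a complement $W$ of $Ann(R)$, and you spell out the regularity and maximality checks that the paper dismisses with ``clearly''.
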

\begin{proof}The proof of the first part is completely similar to the first part of the proof of Proposition~\ref{Div:stableBilin:prop} from bilinear maps. So we omit it and proceed to the proof of ``Moreover" part.

Firstly we show that $R^2\cap Ann(R)$ splits from $Ann(R)$. Since in $Ann(R)$ the multiplication is trivial it is enough to prove that $Ann(R)$ splits over $R^2\cap Ann(R)$ as an abelian group. For that purpose we show that $\Delta (R)=Ann(R)\cap R^2$ is divisible. Assume $z=\sum x_iy_i\in \Delta(R)$. Given any arbitrary non zero integer $m$ the exists $x'_i$ such that $mx'_i=x_i$. We claim $z'=\sum x'_iy_i \in \Delta(R)$ while clearly $mz'=z$. All we need to show is that $z' \in Ann(R)$ which was established in a similar situation in bilinear maps (See the proof of Proposition~\ref{Div:stableBilin:prop}). So $\Delta(R)$ is a divisible group. Therefore $Ann(R)$ splits over $\Delta(R)=Ann(R)\cap R^2$. 

So let $R_0$ be an addition of $R$, i.e. a subgroup of $Ann(R)$ such that $R_0\oplus \Delta(R)=Ann(R)$. we claim that $R_0$ is divisible. Indeed pick $x\in R_0$ and any non-zero integer $m$ and choose $x'\in Ann(R)$ such that $mx'=x$. Therefore there are unique $x'_1\in R_0$ and $x'_2\in \Delta(R)$ such that $x'=x_1'+x_2'$. So $mx_1'+mx_2'=x$. This shows that $mx'_2\in \Delta(R)\cap R_0=0$. Since $\Delta(R)\leq R^2$ is torsion-free we have proved that $x_2'=0$ and so $x'\in R_0$. On the other hand since $R^2$ is divisible there is a subgroup $C$ of $R^+$ such that $R^+=C\oplus R^2$. Since $R^+$ s divisible, $C\cong R^+/R^2$ is divisible too. 

Assume now that $T$ denotes the maximal torsion subgroup of $R^+$. Next we show that $T\leq Ann(R)$. Pick $x\in T$ and $y\in R$. Since $x$ is a torsion element there is a non-zero integer $m$ such that $mx=0$. Let $y'$ be an element such that $my'=y$. Then 
$$xy=x(my')=(mx)y'=0=y'(mx)=(my')x=yx$$
implying that $x\in Ann(R)$. Since $R^2$ is torsion-free we need to have $T\leq R_0$ for any choice of $R_0$. Since $R_0$ is divisible and $T$ is its maximal torsion subgroup there is a divisible torsion-free subgroup $S$ of $R_0$ such that $R_0=S\oplus T$. Finally by the same argument there is a divisible torsion-free subgroup $Q$ of $C$ such that $C=Q\oplus R_0=Q\oplus S \oplus T$. Now define $R_F$ as the subring of $R$ generated by $Q$ and $R^2$. The ring $R_F$ is clearly a foundation of $R$ and a $\Q$-algebra as an extension of the $\Q$-vector space $R^2$ by the $\Q$-vector space $Q$.
 \end{proof} 
  
\begin{prop}\label{wstable:boundedalg:prop} Assume $R$ is an $\omega$-stable bounded ring. Then there are local rings $A_i$ with maximal ideal $J_i$ such that the field $F_i=A_i/J_i$ is either algebraically closed of positive characteristic or finite and $R$ is a central product 
	$$R=R_1\ast\cdots \ast R_n$$ 
of $A_i$-quasi algebras $R_i$. \end{prop}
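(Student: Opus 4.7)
The plan is to adapt the proof of Theorem~\ref{w-st-al:mainthm} to the bounded case, but since additions (and hence foundations) need not exist in general (see Example~\ref{addition:exmp}), I will bootstrap the decomposition of $A(R)$ into a central product rather than a direct product decomposition of $R$.

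First I would argue that the two-sorted structure $\langle A(R), R, s\rangle$ is interpretable in the pure ring $R$, so that $A(R)$ is $\omega$-stable. The argument in Lemma~\ref{A(R):int:lem} goes through verbatim: the assumption of characteristic zero is used there only to invoke Macintyre's theorem at the outset, while the actual construction of $A(R)$ as the intersection of a descending sequence of $\omega$-definable sub-rings $L_n\leq Z_n(f')\leq \operatorname{Sym}_{f'}(R/Ann(R))$, together with the descending-chain-condition finishing move, uses only $\omega$-stability and is insensitive to the characteristic. Since $R$ has bounded exponent $m$, the equation $m\cdot \bar r = 0$ for all $\bar r\in R/Ann(R)$ and faithfulness of the $A(R)$-action force $m\cdot 1_{A(R)}=0$, so that $A(R)$ has positive characteristic. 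Applying Theorem~\ref{CR:stable} to the $\omega$-stable commutative associative unitary ring $A(R)$ now yields
$$A(R)\cong A_1\times\cdots\times A_n,$$
with each $A_i$ local, $J_i$ nilpotent, and $F_i=A_i/J_i$ either algebraically closed of positive characteristic or finite, which produces exactly the scalar rings demanded by the statement.

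Next I would lift this decomposition back to $R$. Let $e_1,\dots,e_n$ be the corresponding orthogonal idempotents in $A(R)$ and set $\bar R_i = e_i\cdot(R/Ann(R))$, so that $R/Ann(R)=\bar R_1\oplus\cdots\oplus\bar R_n$. Letting $\pi:R\to R/Ann(R)$ be the canonical projection, define $R_i := \pi^{-1}(\bar R_i)$. Since $Ann(R)\subseteq R_i$ for every $i$, we immediately get $R=R_1+\cdots+R_n$. The central-product relation $R_iR_j=0$ for $i\neq j$ follows from $A(R)$-bilinearity of the induced map $f^F:R/Ann(R)\times R/Ann(R)\to R^2$: for $x\in R_i$, $y\in R_j$ one has $\bar x = e_i\bar x$ and $\bar y = e_j\bar y$, hence
$$xy = f^F(\bar x,\bar y) = e_ie_j\,f^F(\bar x,\bar y) = 0.$$
Closure of each $R_i$ under multiplication follows from $A(R)$-linearity of $\eta:R^2\to R/Ann(R)$, which forces $\pi(xy)\in\bar R_i$ whenever $x,y\in R_i$. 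A short check then shows $Ann(R_i)=Ann(R)$, $R_i/Ann(R_i)\cong\bar R_i$ and $R_i^2 = e_iR^2$ carry natural $A_i$-module structures, and the restriction of $\eta$ to $R_i^2$ is $A_i$-linear, so each $R_i$ is an $A_i$-quasi algebra in the sense of the paper.

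The main obstacle is precisely this last lifting step. In characteristic zero one disposes of Propositions~\ref{decomringalg:prop} and \ref{L-enrich:prop} together with the existence of a foundation, which together produce a clean direct product decomposition of $R$. Here an addition of $R$ may fail to exist, and the $R_i$ are forced to overlap in $Ann(R)$; the bookkeeping that ensures $R_iR_j=0$ holds exactly (rather than merely modulo some residual ideal) and that the $A_i$-module structures on $R_i/Ann(R_i)$ and $R_i^2$ glue into an honest $A_i$-quasi algebra structure is the real content of the argument.
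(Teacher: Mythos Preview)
Your proposal is correct and follows essentially the same route as the paper's proof: interpret $A(R)$ in $R$ via the argument of Lemma~\ref{A(R):int:lem} (noting that only $\omega$-stability is used), apply Cherlin--Reineke to decompose $A(R)$, then pull back the idempotent decomposition of $R/Ann(R)$ along the canonical projection to obtain the central product and verify the $A_i$-quasi algebra structure on each $R_i$. Your write-up is in fact slightly more detailed than the paper's in justifying $R_iR_j=0$ via bilinearity and in deducing the positive characteristic of $A(R)$ from faithfulness.
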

\begin{proof}
Similar to the proof of Lemma~\ref{A(R):int:lem} the action of scalar ring $A(R)$ on $R/Ann(R)$ is interpretable in $R$. Hence $A=A(R)$ is $\omega$-stable. So $A$ has a Cherlin-Reineke decomposition $A=A_1\times \cdots \times A_n$ as in Theorem~\ref{CR:stable}. In this case since $R^+$ has bounded exponent the $F^+_i$ will have bounded exponent so the $F_i$ have positive characteristic. By definition $A$ has well-defined action on $R/Ann(R)$ and $R^2$ and the canonical map $\eta:R^2 \to R/Ann(R)$ is $A$-linear. So indeed $R$ is an $A$-quasi algebra. Now consider  $\bar{R}_i=A_i(R/Ann(R))$ and let $R_i$ be inverse image of $\bar{R}_i$ under the canonical map $\bar{~}:R\to R/Ann(R)$. Clearly $R$ is a central product of the $R_i$. Moreover if $x\in Ann(R_i)$ then $x\in Ann(R)$ since $R_i$ annihilates every $R_j$, $i\neq j$. So $Ann(R_i)=Ann(R)$. Therefore by construction $R_i/Ann(R_i)$ and $R_i^2$ are $A_i$-algebras and the canonical map $\eta_i: R_i^2\to R_i/Ann(R_i)$ is $A_i$-linear. so each $R_i$ is an $A_i$-quasi algebra. This finishes the proof. 
\end{proof}
\subsection{Algebras of finite Morley rank}\label{fmr-alg:sec}
\begin{prop} Assume $R$ is a ring. Then $R$ has finite Morley rank if and only if $R=R_D\ast R_C$ where $R_D$ and $R_C$ are both absolutely definable in $R$, $R_D$ is a divisible ring of finite Morley rank, $R_C$ is a bounded ring of finite Morley rank and $|R_D\cap R_C|<\infty$.\end{prop}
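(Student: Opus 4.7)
My plan is to prove the two directions separately, each leaning heavily on results already in the paper.

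For the forward direction, I would leverage Proposition~\ref{wstable:aldecom:prop} directly: since any ring of finite Morley rank is $\omega$-stable, that proposition already produces the central product decomposition $R = R_D \ast R_C$ with $R_D$ divisible, $R_C$ bounded, and both absolutely definable in $R$. Being definable in a structure of finite Morley rank, $R_D$ and $R_C$ have finite Morley rank themselves. The finiteness of $R_D \cap R_C$ is then obtained by invoking the final clause of Theorem~\ref{Mac:stable:tfa} applied to the additive group $R^+$, noting that the subgroups $R_D^+$ and $R_C^+$ constructed in the proof of Proposition~\ref{wstable:aldecom:prop} coincide with the $M_D$ and $M_C$ produced by Macintyre's theorem.

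For the converse, suppose $R = R_D \ast R_C$ satisfies the stated hypotheses. I would realize $R$ as an interpretation in a two-sorted structure of finite Morley rank. First observe that the central product relations $R_D R_C = R_C R_D = 0$ force $R_D \cap R_C \leq Ann(R)$, so $R_D \cap R_C$ is a finite central ideal of $R$. Next let $\fra{S}$ be the two-sorted structure with sorts $R_D$ and $R_C$ (each carrying its ring operations), augmented with finitely many constants naming the elements of $R_D \cap R_C$ in each sort. Each sort has finite Morley rank by hypothesis and the extra data is finite, so $\fra{S}$ itself has finite Morley rank. The map
$$\varphi\colon R_D \times R_C \longrightarrow R, \qquad (x,y) \longmapsto x+y,$$
is a surjective ring homomorphism --- the cross terms $xy'$ and $yx'$ vanish by the central product relations, so the componentwise product $(x,y)(x',y')=(xx',yy')$ on the left corresponds to the product in $R$ --- with kernel $\{(z,-z) : z \in R_D \cap R_C\}$, a finite and $\fra{S}$-definable set. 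Therefore $R$ is interpretable in $\fra{S}$ as a finite-to-one quotient and consequently has finite Morley rank.

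The only delicate step is verifying in the converse that the componentwise ring operations on $R_D \times R_C$ descend correctly under $\varphi$; this is an immediate consequence of the central product hypothesis and requires no real work. Neither direction demands a new idea beyond what has already been established in the paper.
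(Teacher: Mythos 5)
Your proof is correct and follows essentially the same route as the paper: the forward direction rests on Proposition~\ref{wstable:aldecom:prop}, and the converse realizes $R$ as the quotient of $R_D\times R_C$ by the finite definable ideal $\{(z,-z): z\in R_D\cap R_C\}$, which is exactly the paper's argument. The only (harmless) difference is that you obtain $|R_D\cap R_C|<\infty$ by citing the final clause of Theorem~\ref{Mac:stable:tfa} directly, whereas the paper re-derives it from the structure of divisible abelian groups of finite Morley rank; both are valid since the subgroups in Proposition~\ref{wstable:aldecom:prop} are indeed the $M_D$ and $M_C$ of Macintyre's theorem.
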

\begin{proof} Most of the proof of the only if direction is included in the proof of Proposition~\ref{wstable:aldecom:prop}. We just need to check $|R_D\cap R_C|<\infty$. By definition $R_C=mR^+$ where $m$ is the exponent of $R_C$. Since $R_D^+$ is divisible abelian of finite Morley rank it follows from Theorem~\ref{Mac:stable:tfa} that $R_D^+= (\bigoplus_I\Q^+) \oplus (\bigoplus_{p}(\bigoplus_{I_p} \Z(p^\infty)))$ where $p$ ranges over all primes and for each $p$ the index set $I_p$ is finite. This means the set of elements of exponent $m$ in $R_D$ is finite. In particular $|R_D\cap R_C|<\infty$. For the other direction note that $R_D\ast R_C\cong (R_D\times R_C)/I$ where $I=\{(a,-a):a\in R_D\cap R_C\}$. The ideal $I$ is definable in the product $R_D\times R_C$ since $|R_D\cap R_C|<\infty$. The product $R_D\times R_C$ is of finite Morley rank, so is $R_D \ast R_C$, since it is interpretable in $R_D\times R_C$. \end{proof}

\begin{lem}\label{R2:defn:lem} Let $R$ be a divisible ring of finite Morley rank. Then the ideal $R^2$ is definable in $R$.\end{lem}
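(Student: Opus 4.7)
The plan is to show that the ring multiplication, viewed as a bilinear map, has finite width; once this is established, $R^2$ is defined by a uniformly bounded existential formula. First I would set up the bilinear-map picture. The multiplication $f : R \times R \to R$, $f(x,y) = xy$, is interpretable in the pure ring $R$, so it has finite Morley rank. Its two-sided kernel is $C(f) = Ann(R)$ and its image is $im(f) = R^2$. The induced foundation $f^F : R/Ann(R) \times R/Ann(R) \to R^2$ is a full, non-degenerate bilinear map, interpretable in $f$, hence also of finite Morley rank. Since $R$ is divisible (and therefore $\omega$-stable), Proposition~\ref{decom:wstable:prop} gives that both $R/Ann(R)$ and $R^2$ are torsion-free divisible abelian groups.

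Next I would apply Proposition~\ref{Qbilin:fmr:prop} to $f^F$: being of finite Morley rank on torsion-free divisible groups, it decomposes as $f^F = f^F_1 \oplus \cdots \oplus f^F_n \oplus f^{F,0}$. Non-degeneracy of $f^F$ forces $C(f^F) = 0$, so the identically degenerate summand $f^{F,0}$ vanishes. Each remaining $f^F_i$ is $k_i$-bilinear on finite-dimensional $k_i$-spaces (with $k_i$ an algebraically closed field of characteristic zero), so has finite width; letting $s_i$ be the width of $f^F_i$, the direct sum $f^F$ has finite width $s = s_1 + \cdots + s_n$. Alternatively, one can appeal to Corollary~\ref{bilinfmr:fw:cor} directly, since in the decomposition above the residue fields $P_i/J_i$ are all algebraically closed.

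Finite width transfers from $f^F$ back to $f$: any $z \in R^2$ equals $\sum_{j=1}^{s} f^F(\bar x_j,\bar y_j) = \sum_{j=1}^{s} \overline{x_j y_j}$ for suitable $x_j,y_j \in R$, which in $R$ means $z = \sum_{j=1}^s x_j y_j$. Therefore
\[
R^2 \;=\; \Bigl\{\, z \in R : \exists\, x_1,y_1,\dots,x_s,y_s \in R,\ z = \sum_{j=1}^{s} x_j y_j \,\Bigr\},
\]
which is a first-order definition of $R^2$ in the pure ring $R$.

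The main obstacle, conceptually, is the reduction from the pure ring $R$ to the foundation $f^F$ on torsion-free divisible groups, which is what allows the strong structure theorem Proposition~\ref{Qbilin:fmr:prop} to kick in; once in that setting, the passage through finitely many finite-dimensional $k_i$-bilinear components delivers the bounded-sum description of $R^2$ essentially for free.
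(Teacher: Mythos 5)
Your proof is correct and follows essentially the same route as the paper: pass to the induced non-degenerate bilinear map on $R/Ann(R)$, use Proposition~\ref{decom:wstable:prop} to land in the torsion-free divisible setting, and conclude that the image has finite width (hence is definable) via Proposition~\ref{Qbilin:fmr:prop}, which is exactly the content of Corollary~\ref{bilinfmr:fw:cor} that the paper cites. The only difference is that you unpack the finite-width computation explicitly rather than quoting the corollary; also note that it is the finite Morley rank hypothesis, not divisibility, that gives $\omega$-stability in your appeal to Proposition~\ref{decom:wstable:prop}.
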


\begin{proof} The natural non-degenerate bilinear map $f:R/Ann(R)\times R/Ann(R) \to R $ associated to $R$ is interpretable in $R$ so it is a bilinear map of finite Morley rank. By Proposition~\ref{decom:wstable:prop} $R/Ann(R)$ is torsion-free. Note that $R^2=im(f)$ where the later is definable in $f$ since $f$ satisfies hypothesis of Corollary~\ref{bilinfmr:fw:cor} now. Therefore $R^2$ is definable in $R$.  
 \end{proof} 
 
\noindent \emph{Proof of Theorem~\ref{fmr-alg:mainthm}.} Most of the proof of the only if direction is already included in the proof of Theorem~\ref{w-st-al:mainthm}. However in this case $A(R)$ satisfies the hypothesis of Theorem~\ref{CR:fmr}. Therefore $A(R)$ is decomposed into a finite direct sum 
$$A(R)=A_1\times\cdots \times A_n$$ of Noetherian local rings $A_i$, where the maximal ideal of $A_i$ is nilpotent and the field of residues is a characteristic zero algebraically closed field. Similar to proof of Theorem~\ref{w-st-al:mainthm} we find that $R$ can be decomposed into a finite direct sum $$R\cong R_1\times\cdots \times R_n\times R_0$$ where each $R_i$, $i\neq 0$, is an indecomposable $k_i$-algebra, $k_i$ is a characteristic zero algebraically closed field, and $R_0$ is a central (and in particular with zero multiplication) $\Q$-subalgebra of $R$.

The only part left in this direction is to prove that $R_i$ is a finite dimensional $k_i$-algebra, $i=1, \ldots , n$. Firstly $A_i$ is a Noetherian $k_i$-algebra and therefore a finite-dimensional $k_i$-algebra.  Since $R_i=e_iR_F$, where $e_i$ is the unit of $A_i$, is a central extension of $e_i(Ann(R)\cap R^2)$ by $e_i(R/Ann(R))$ it is enough to prove that each of these $k_i$-vector spaces are finite dimensional over $k_i$. Note that $\bar{R}_A=_{\text{def}}\langle R/Ann(R), A(R), s\rangle$, where $s$ is the predicate describing the action of $A(R)$ on $R/Ann(R)$, is interpretable in $R$ so it has finite Morley rank. Furthermore $\langle e_i(R/Ann(R)), A_i, s_i\rangle$ is definable in $\bar{R}_A$ using the constant $e_i$, so the former is also of finite Morley rank. So by Lemma~\ref{local-alg:lem}, $e_i(R/Ann(R))$ is a finite-dimensional vector space over $k_i$. For $e_i(R^2\cap Ann(R))$ the argument is similar but we need to use Lemma~\ref{R2:defn:lem} to show that $R^2\cap Ann(R)$ is definable in $R$.
     
Let us prove the sufficiency of the conditions above. It is enough to show that each direct summand has finite Morley rank. Each of the algebras $R_i$ is interpretable in the corresponding algebraically closed field $k_i$ using some fixed structural constants. So each $R_i$ has finite Morley rank. The $\Q$-algebra $R_0$ is assumed to have zero multiplication so its definable structure is no richer than that of a vector space over the field of rationals $\Q$, so it is of finite Morley rank. This finishes the proof.
\qed

\begin{prop} For a divisible ring $R$ the following statements are equivalent.
	\begin{enumerate}
		\item $R$ has finite Morley rank
		\item $R$ admits the decomposition $R=R_F \times R_0$ where $R_F$ is a characteristic zero regular ring of finite Morley rank and $R_0$ is divisible algebra of finite Morley rank with zero multiplication.
	\end{enumerate}\end{prop}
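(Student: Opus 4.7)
The direction (2) $\Rightarrow$ (1) is immediate, since a finite direct product of rings of finite Morley rank has finite Morley rank. So the real work is (1) $\Rightarrow$ (2). The plan is to first invoke Proposition~\ref{decom:wstable:prop} to obtain the decomposition $R = R_F \times R_0$, where $R_F$ is a $\Q$-algebra (hence characteristic zero) containing $R^2$ and $R_0$ is a divisible complement of $R^2 \cap Ann(R)$ inside $Ann(R)$. The subring $R_F$ is regular by construction, being a foundation, and $R_0$ has zero multiplication because it sits inside $Ann(R)$. It then remains to check that both factors have finite Morley rank.

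For $R_0$: from $Ann(R) = (R^2 \cap Ann(R)) \oplus R_0$ one has $R_0 \cong Ann(R)/(R^2 \cap Ann(R))$ as abelian groups and therefore as rings with zero multiplication. Now $Ann(R)$ is absolutely definable in $R$ and $R^2$ is definable by Lemma~\ref{R2:defn:lem}, so this quotient is interpretable in $R$ and inherits finite Morley rank.

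For $R_F$ (the main step), I would adapt the argument used in the proof of Theorem~\ref{fmr-alg:mainthm}. The proof of Lemma~\ref{A(R):int:lem} uses only that $R/Ann(R)$ is torsion-free, and this holds in our setting by Proposition~\ref{decom:wstable:prop}, so the same argument interprets $A(R)$ together with its action on $R/Ann(R)$ inside $R$; in particular $A(R)$ has finite Morley rank. Since $A(R)$ acts faithfully on the $\Q$-vector space $R/Ann(R)$, it is characteristic zero, and Theorem~\ref{CR:fmr} gives a decomposition $A(R) = A_1 \times \cdots \times A_n$ with each $A_i$ a Noetherian local ring whose residue field $k_i$ is algebraically closed of characteristic zero; Lemma~\ref{nilrep:lem} then makes each $A_i$ finite-dimensional over $k_i$. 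The corresponding idempotents $e_i$ of $A(R)$ yield a decomposition $R_F = R_1 \times \cdots \times R_n$ by Proposition~\ref{decomringalg:prop}, with each $R_i = e_i R_F$ a $k_i$-algebra via Proposition~\ref{L-enrich:prop}. Exactly as in the proof of Theorem~\ref{fmr-alg:mainthm}, $R_i$ is a central extension of $e_i(R/Ann(R))$ by $e_i(R^2 \cap Ann(R))$, and both of these $A_i$-modules are interpretable in $R$ (the second using Lemma~\ref{R2:defn:lem}); Lemma~\ref{local-alg:lem} then forces each of them to be finite-dimensional over $k_i$, so $R_i$ is finite-dimensional and $R_F$ has finite Morley rank.

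The main obstacle is precisely this $R_F$ step: a foundation is only determined up to a choice of complement and is typically not definable in $R$, so one cannot read off its Morley rank directly as one does for $R_0$. The workaround, which is the key point, is to never treat $R_F$ as a definable set — one instead interprets the canonical scalar ring $A(R)$ and its actions on the genuinely definable objects $R/Ann(R)$ and $R^2$, and then uses the Cherlin–Reineke decomposition of $A(R)$ to cut $R_F$ into finite-dimensional pieces over the resulting algebraically closed fields.
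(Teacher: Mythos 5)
Your proof is correct and follows essentially the same route as the paper: handle $R_0$ via the interpretable quotient $Ann(R)/(Ann(R)\cap R^2)$ using Lemma~\ref{R2:defn:lem}, and handle $R_F$ by interpreting $A(R)$, applying the Cherlin--Reineke decomposition, and repeating the finite-dimensionality argument from the proof of Theorem~\ref{fmr-alg:mainthm}. The only difference is that you spell out the ``similar to Theorem~\ref{fmr-alg:mainthm}'' steps (and the non-definability of $R_F$ issue) in more detail than the paper does.
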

	\begin{proof} Statement (2) clearly implies (1). Most of the other direction is already included in proof of Proposition~\ref{decom:wstable:prop}. Remains to prove the statements regarding Morley rank of the components. The addition $R_0$ is isomorphic to $Ann(R)/(Ann(R)\cap R^2)$. The later is interpretable in $R$ since both $Ann(R)$ and $R^2$ are definable in $R$. So $R_0$ has finite Morley rank. Similar to the proof of Theorem~\ref{fmr-alg:mainthm} the scalar ring $A(R)$ has characteristic zero since $R/Ann(R)$ is torsion-free. The ring $A(R)$ is interpretable in $R$, hence has finite Morley rank and makes $R_F$ an $A(R)$-algebra. Again by a similar argument as in the proof of Theorem~\ref{fmr-alg:mainthm}, $$R_F=R_1\times \cdots \times R_n$$
	where each component is a finite-dimensional $k_i$-algebra for an algebraically closed field $k_i$ of characteristic zero. So indeed each $R_i$ is interpretable in a $k_i$ with the help of a finite number of fixed structure constants. Hence each $R_i$ has finite Morley rank. Therefore $R_F$ has finite Morley rank.      \end{proof} 

\begin{prop}\label{fmr:boundedalg:prop} Assume $R$ is a bounded ring of finite Morley rank. Then there are local rings $A_i$ with maximal ideal $J_i$ such that the field $F_i=A_i/J_i$ is either algebraically closed of positive characteristic or finite and $R$ is a central product 
	$$R=R_1\ast\cdots \ast R_n$$ 
	of $A_i$-quasi algebras $R_i$. If $F_i$ is algebraically closed then $A_i$ is Noetherian and both $R_i/Ann(R_i)$ and $R_i^2$ are finitely generated $A_i$-modules.\end{prop}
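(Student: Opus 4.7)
The plan is to follow the blueprint of Proposition~\ref{wstable:boundedalg:prop} verbatim for the decomposition, then upgrade to the finite Morley rank setting by invoking Theorem~\ref{CR:fmr} in place of Theorem~\ref{CR:stable} and harvesting the finite generation claims through Lemma~\ref{local-alg:lem}.

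First, just as in the proof of Lemma~\ref{A(R):int:lem}, the scalar ring $A(R)$ together with its action on $R/Ann(R)$ is interpretable in $R$. The argument there rests only on the descending chain condition for the families $Z_n(f')$ and $L_n$, which is available here because finite Morley rank implies $\omega$-stability. Hence $A(R)$ has finite Morley rank and, being a commutative associative unitary ring, admits a decomposition $A(R)=A_1\times \cdots \times A_n$ by Theorem~\ref{CR:fmr}, with each $A_i$ local, $J_i$ nilpotent, and residue field $F_i=A_i/J_i$ either algebraically closed or finite. Because $R^+$ has bounded exponent, so does each $A_i^+$, forcing each $F_i$ to have positive characteristic. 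Following Proposition~\ref{wstable:boundedalg:prop}, I then set $\bar R_i=A_i(R/Ann(R))$ and let $R_i$ be its preimage in $R$ under the canonical projection; the same verification produces the central product $R=R_1\ast\cdots \ast R_n$ with each $R_i$ an $A_i$-quasi algebra and $Ann(R_i)=Ann(R)\cap R_i$.

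For the indices with $F_i$ algebraically closed, Theorem~\ref{CR:fmr}(2) gives immediately that $A_i$ is Noetherian. To obtain finite generation of $R_i/Ann(R_i)$ as an $A_i$-module, I would observe that the two-sorted structure $\langle R_i/Ann(R_i), A_i, s_i\rangle$ is interpretable in $R$ with the help of the idempotent $e_i\in A(R)$ as a parameter; hence it has finite Morley rank, and Lemma~\ref{local-alg:lem} applies. Finally, if $\bar e_1,\ldots,\bar e_m$ generate $R_i/Ann(R_i)$ over $A_i$, then the $A_i$-bilinear product map $f_i:R_i/Ann(R_i)\times R_i/Ann(R_i)\to R_i^2$ is full, and $A_i$-bilinearity shows that the finitely many elements $f_i(\bar e_s,\bar e_t)$, $1\le s,t\le m$, generate $R_i^2$ as an $A_i$-module.

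The main obstacle I anticipate is bookkeeping rather than mathematical: checking that the proof of Lemma~\ref{A(R):int:lem} really transfers to the bounded setting without using characteristic zero (nothing there does, beyond $\omega$-stability), and confirming that the two-sorted structure on the $i$th block is indeed interpretable in $R$ with parameters so that Lemma~\ref{local-alg:lem} can be applied. Once these points are verified, the finite generation of $R_i^2$ is an elementary consequence of bilinearity, and no new ideas beyond those already used in the $\omega$-stable and divisible cases are required.
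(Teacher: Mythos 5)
Your proposal is correct and follows essentially the same route as the paper: the decomposition is lifted verbatim from Proposition~\ref{wstable:boundedalg:prop}, and the Noetherity and finite-generation claims for the blocks with algebraically closed residue field are obtained from Lemma~\ref{local-alg:lem} applied to the two-sorted structure $\langle R_i/Ann(R_i), A_i, s_i\rangle$ (definable with the idempotent $e_i$ as parameter), with finite generation of $R_i^2$ then following from fullness and $A_i$-bilinearity of the induced product map. The only cosmetic difference is that you cite Theorem~\ref{CR:fmr}(2) for Noetherity of $A_i$ where the paper extracts it from Lemma~\ref{local-alg:lem}, and you spell out the bilinearity argument for $R_i^2$ that the paper leaves implicit.
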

\begin{proof} Again most of the statement is a corollary of Proposition~\ref{wstable:boundedalg:prop}. The statement regarding Noetherity of the $A_i$ and finite generation of $A_i$-module  $R_i/Ann(R_i)$ where $F_i$ is algebraically closed follows from Lemma~\ref{local-alg:lem}. Subsequently the statement regarding $R_i^2$ follows. In addition for these $R_i$, $R_i^2$ is of finite width and definable. If $F_i$ is just finite no such statement can be made.\end{proof}  
\subsection{Uncountably categorical algebras}

We can now prove a criterion for uncountable categoricity of a characteristic zero algebra. We begin by discussing some known facts and fixing some notation. So assume that $R$ is  a directly indecomposable finite-dimensional $k$-algebras without zero multiplication.  Now the ring $A(R)$ is a finite dimensional $k$-algebra and we are in the set up of Lemma~\ref{local-alg:lem} and its proof. Let's call the series 
$$R> JR > \ldots >J^mR=0,$$
a \emph{$J$-series} for $R$ and a $k$-basis $\bar{u}=(u_1, \ldots , u_d)$ adapted to this series a \emph{special} $k$-basis for $R$, and set $k(R)=_{\text{def}}A/J$. Pick a field of representatives $L$ of $k(R)$ in $A(R)$ and consider the subfield $k_0$ of $L$ generated by the structure constants associated to $\bar{u}$. Set $U_0$ to be the $k_0$ hull of $\bar{u}$ in $R$. Then:

\begin{thm}[\cite{M90c}, Theorem 8]\label{models-alg:thm} Any model $S$ of the complete theory of the directly-indecomposable $k$-algebra $R$ has the form $U_0\otimes_{k_0}k(S)$ where $k(S)\equiv k(R)$.\end{thm}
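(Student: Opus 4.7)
The plan is to show that $\mathrm{Th}(R)$ determines, uniformly in any model $S\equiv R$, (i) the interpreted scalar ring $A(S)$ with its $J(S)$-filtration, (ii) a special basis $\bar v$ of $S$, and (iii) a subfield $k_0(S)\subseteq A(S)$ canonically isomorphic to $k_0$; the isomorphism $S\cong U_0\otimes_{k_0}k(S)$ then follows by extending scalars.

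First, by Proposition~\ref{A(R)exists:prop}, the two-sorted structure $\langle A(R),R,s\rangle$ is interpretable in the pure ring $R$ without parameters. The same interpretation schema applied to $S$ produces $A(S)$ with its action on $S$ and $A(S)\equiv A(R)$. Consequently $A(S)$ is local with a nilpotent maximal ideal $J(S)$ of the same index $m$, the residue field $k(S)=A(S)/J(S)$ satisfies $k(S)\equiv k(R)$, and the $k(S)$-dimensions of the successive quotients of the $J(S)$-filtration of $S$ match those for $R$. Next, I would write a first-order sentence $\Phi$ in the ring language of $R$ asserting: ``there exist $v_1,\ldots,v_d$ adapted to the $J$-filtration of the interpreted $A$ and elements $d_{ij}^k$ in the interpreted field of representatives $L$ such that $v_iv_j=\sum_k d_{ij}^k v_k$ and the $d_{ij}^k$ satisfy a fixed finite package $P(\bar d)$ of polynomial identities and non-identities pinning down the isomorphism type of $k_0=\mathbb{Q}(\bar c)$ inside $L$.'' Since $R\models\Phi$ with witness $(\bar u,\bar c)$, also $S\models\Phi$ with some witness $(\bar v,\bar d)$, yielding a subfield $k_0(S):=\mathbb{Q}(\bar d)\subseteq L(S)\subseteq A(S)$ and a canonical isomorphism $\iota\colon k_0\to k_0(S)$ via $c_{ij}^k\mapsto d_{ij}^k$.

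Finally, I would define $\varphi\colon U_0\otimes_{k_0}k(S)\to S$ by $u_i\otimes\lambda\mapsto\lambda v_i$, where $\lambda\in k(S)$ acts on $v_i$ through $L(S)\subseteq A(S)$ and the $k_0$-balancing is provided by $\iota$ composed with $k_0(S)\hookrightarrow L(S)\cong k(S)$. Matching of structure constants makes $\varphi$ a $k(S)$-algebra homomorphism; the adaptation of $\bar v$ to the $J(S)$-series makes $\bar v$ a $k(S)$-basis of $S$ (the $L(S)$-action lifts scalar multiplication), so $\varphi$ is surjective, and injectivity follows by equality of $k(S)$-dimensions on both sides.

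The technical core is justifying that a \emph{finite} package $P$ actually exists in Step~2: in general the isomorphism type of a finitely generated field over $\mathbb{Q}$ is not pinned down by finitely many polynomial relations (the transcendental part requires infinitely many negations of polynomial identities), so one must exploit the location of $\bar c$ inside the \emph{finite-dimensional} $k$-algebra $L$. This finiteness sharply restricts the prime ideals of $\mathbb{Z}[X_{ij}^k]$ that can arise as the kernel of the evaluation $X_{ij}^k\mapsto c_{ij}^k$, so that up to the generic type controlled by finitely many relations, the finite package $P$ suffices to force $k_0(S)\cong k_0$ along any realization in $A(S)$.
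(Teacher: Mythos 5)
First, a point of comparison: the paper does not prove this statement at all --- Theorem~\ref{models-alg:thm} is imported verbatim from \cite{M90c} (Theorem~8 there), so there is no in-paper proof to measure your argument against. Judged on its own terms, your proposal has the right skeleton (interpret $A(S)$ and its $J$-filtration uniformly, produce a special basis of $S$, match structure constants, extend scalars), but the step you yourself flag as the ``technical core'' is a genuine gap, and the patch you offer does not close it. For the map $u_i\otimes\lambda\mapsto\lambda v_i$ to be a well-defined $k_0$-balanced isomorphism you need the structure constants $\bar d$ of $\bar v$ to satisfy \emph{exactly} the polynomial relations over $\Q$ that $\bar c$ does. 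The positive relations form a finitely generated ideal $I$ (Hilbert basis), so they fit into one sentence; but the complementary conditions $q(\bar d)\neq 0$ for every $q\notin I$ are infinitely many, and they are not consequences of the positive ones together with any finite list of non-identities whenever some coordinate of $\bar c$ is transcendental over $\Q$ adjoined to the others. A witness $\bar d$ of your sentence $\Phi$ in $S$ may then satisfy strictly more relations than $\bar c$, in which case $c_{ij}^k\mapsto d_{ij}^k$ does not extend to a field embedding of $k_0$ and $\varphi$ is not even well defined. Your justification --- that the finite-dimensionality of $L$ restricts the possible kernels of $\Z[X_{ij}^k]\to L$ --- is a non sequitur: $L$ is a field of representatives, isomorphic to $k(R)$, hence an infinite field in general, and the finite $k$-dimension of $R$ and $A(R)$ places no bound whatsoever on the transcendence degree of $\Q(\bar c)$ over $\Q$ (continuous families of pairwise non-isomorphic finite-dimensional nilpotent algebras exist, with essentially transcendental structure constants).

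The real content of the cited theorem lives precisely at this point and cannot be captured by a single first-order sentence about a single basis. One must argue either that the conclusion is insensitive to which realization of the finite positive package one picks --- e.g., that any special basis of $S$ whose structure constants satisfy the generators of $I$ can be corrected, by a further filtration-preserving change of basis over $k(S)$, into one realizing the full quantifier-free field type of $\bar c$ --- or one must work with the entire $\emptyset$-definable family of special-basis structure-constant tuples and show that the resulting tensor product does not depend on the choice. Relatedly, your argument never actually produces the embedding $k_0\hookrightarrow k(S)$ that the statement of the theorem presupposes; when $k_0$ has positive transcendence degree over $\Q$ this is not automatic for an arbitrary $k(S)\equiv k(R)$ and requires its own justification (or a normalization of the choice of $\bar u$, $L$ and $k_0$). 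As written, your proof is complete only in the case where $k_0$ is algebraic over $\Q$, since there $I$ is maximal and the finite positive package genuinely isolates the quantifier-free type of $\bar c$.
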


\begin{thm}\label{cat:alg:thm} Assume $R$ is a characteristic zero ring without zero multiplication. Then $R$ is $\aleph_1$-categorical if and only if it is a directly indecomposable $k$-algebra without zero multiplication, where $k$ is an uncountable characteristic zero algebraically closed field.\end{thm}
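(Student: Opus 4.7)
The plan is to split the biconditional into two directions, relying on Theorem~\ref{fmr-alg:mainthm} for the forward direction and on Theorem~\ref{models-alg:thm} together with the Steinitz categoricity of $\mathrm{ACF}_0$ for the converse.

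For the forward direction, I would first invoke Baldwin--Lachlan to conclude that $\aleph_1$-categoricity gives $R$ finite Morley rank. Since $R$ has characteristic zero, Theorem~\ref{fmr-alg:mainthm} then supplies a decomposition $R \cong R_1 \times \cdots \times R_n \times R_0$ in which each $R_i$ is a finite-dimensional directly indecomposable $k_i$-algebra over an algebraically closed field $k_i$ of characteristic zero and $R_0$ is a $\Q$-algebra with zero multiplication; the hypothesis that $R$ is without zero multiplication forces $R_0 = 0$. The heart of the argument is to exclude $n \geq 2$. By Lemma~\ref{A(R):int:lem} the scalar ring $A(R) \cong A_1 \times \cdots \times A_n$ is interpretable in $R$, and its finitely many primitive idempotents, coming from the local summands $A_i$, are definable in the finite Morley rank setting; consequently the product decomposition of $R$ lifts uniformly to any elementarily equivalent model $S$ as $S = S_1 \times \cdots \times S_n$ with $S_i \models \mathrm{Th}(R_i)$. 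Theorem~\ref{models-alg:thm} then gives $S_i \cong U_{0,i} \otimes_{k_{0,i}} K_i$ with $K_i \equiv k_i$, and for uncountable $K_i$ one has $|S_i| = |K_i|$. If $n \geq 2$, choosing $(|K_1|, |K_2|) = (\aleph_1, \aleph_0)$ produces one model of cardinality $\aleph_1$, while $(\aleph_1, \aleph_1)$ produces a non-isomorphic one, contradicting $\aleph_1$-categoricity. Hence $n = 1$, $R$ is a finite-dimensional directly indecomposable $k$-algebra, and since $R$ is uncountable while $\dim_k R < \infty$, the field $k$ must be uncountable.

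For the converse, I would take $R$ to be a (finite-dimensional) directly indecomposable $k$-algebra without zero multiplication over an uncountable algebraically closed field $k$ of characteristic zero. Theorem~\ref{models-alg:thm} parameterizes every model $S$ of $\mathrm{Th}(R)$ as $U_0 \otimes_{k_0} k(S)$ with $k(S) \equiv k$; if $|S| = \aleph_1$ then $|k(S)| = \aleph_1$, and by Steinitz' categoricity of $\mathrm{ACF}_0$ in every uncountable cardinality, $k(S)$ is determined up to isomorphism, whence so is $S$.

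The main obstacle is uniformity: the forward direction requires that the product decomposition of $R$ transfer to arbitrary $S \equiv R$ and that the scalar fields $K_i$ in the resulting components be independently variable. This rests on the interpretability of $A(R)$ in $R$ provided by Lemma~\ref{A(R):int:lem}, on the definability of the finitely many primitive idempotents of $A(R)$ in the finite Morley rank context, and on the explicit parameterization given by Theorem~\ref{models-alg:thm}. Once these ingredients are in place the cardinality dial against $n \geq 2$ is routine, and both directions reduce to applications of the preceding structure results.
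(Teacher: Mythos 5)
Your proposal is correct and follows essentially the same route as the paper: the forward direction via finite Morley rank and Theorem~\ref{fmr-alg:mainthm} followed by a cardinality argument on the components, and the converse via Theorem~\ref{models-alg:thm} and the uncountable categoricity of algebraically closed fields. The only difference is cosmetic: where the paper tersely says one can replace a second infinite component by an elementarily equivalent model of smaller cardinality, you spell out the uniformity via the interpretability of $A(R)$ and its idempotents, which is a reasonable filling-in of the same idea rather than a different argument.
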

\begin{proof} The proof is standard but we provide some detail here. For the only if direction note that uncountable categoricity implies finite Morley rank. So $R$ admits a decomposition as in Theorem ~\ref{fmr-alg:mainthm}. Now Assume $R$ has uncountable cardinality say $\lambda$. So at least one component $R_i$ for some $1 \leq i\leq n$ or $R_0$ has to have cardinality $\lambda$. If there is more than one component other than the one we already picked one can replace it by a model of cardinality $\mu < \lambda$ contradicting the categoricity assumption. Since the algebra is assumed to be without zero multiplication $R$ is equal to $R_i$, $i\neq 0$. 

For the if direction by Theorem~\ref{models-alg:thm}, $R\cong U_0\otimes_{k_0}k(R)$. The field $k(R)$ is a finite extension of the algebraically closed field $k$ so indeed $k=k(R)$. Now Assume $R$ has uncountable cardinality $\lambda$. If $S\equiv R$ and they have the same cardinality $\lambda$ then $k(S)\equiv k(R)\equiv k$ and they have to have cardinality $\lambda$ since $k_0$ is a countable field by construction. The result follows from uncountable categoricity of algebraically closed fields. \end{proof}

\begin{rem} As mentioned in the introduction this theorem is due to B. Zilber~\cite{Z82} for nilpotent Lie algebras. Our result seems to be new in this generality to the best of our knowledge.\end{rem}  

\section{Bi-interpretability of $k$-groups and nilpotent Lie $k$-algebras}\label{malcev-cor:sec}
\subsection{Mal'cev correspondence between $k$-groups and nilpotent Lie $k$-algebras}

Here we briefly discuss the notion of a nilpotent group admitting exponents in a characteristic zero field $k$ or a $k$-group for short. The details may be found in either of \cite{hall, war}.

\begin{defn}\label{kgroups:defn}
A group $G$ admitting exponents in a characteristic zero field $k$ or a $k$-group for short is a nilpotent group $G$ together with a function:
$$G\times k\rightarrow G, \quad (x,a)\mapsto x^a,$$
satisfying the following axioms:
\begin{enumerate}
 \item $x^1=x$, $x^a x^b=x^{(a+b)}$, $(x^a)^b=x^{(a b)}$, for all $x\in G$ and $a, b\in R$.
 \item $(y^{-1}xy)^{a}=y^{-1}x^a y$ for all $x,y\in G$ and $a\in R$.
 \item  $x_1^a x_2^a\cdots x_n^a=(x_1x_2\cdots x_n)^a\tau_2(\bar{x})^{\binom{a}{2}}\cdots \tau_c(\bar{x})^{\binom{a}{c}}$, for all $x_1$, \ldots, $x_n$ in $G$, $a \in R$, where $\tau_i$ come from Hall-Petresco formula and $c$ is the nilpotency class of $G$.
\end{enumerate}\end{defn}
Now we can define a homomorphism of $k$-groups.
\begin{defn}Let $G$ and $H$ be two $k$-groups. A map $\phi:G\rightarrow H$ is a homomorphism of $k$-groups if
\begin{itemize}
 \item $\phi$ is a homomorphism of groups,
 \item $\phi(x^a)=(\phi(x))^a$ for all $x\in G$ and $a\in k$.
\end{itemize}
\end{defn}

The following theorem was proved first in the case where $k=\Q, \R$ by A.I. Mal'cev in the finite dimensional case. Later it was noticed by M. Lazard (in an unpublished paper) that finite-dimensionality condition can be removed. D. Quillen~\cite{Q} extended these results to the pro-nilpotent case using the theory of complete Hopf-algebras. Finally R. B. Warfield~\cite{war} sketched, using Quillen's approach, a generalization to the case of arbitrary $k$-groups, where $k$ is a field of characteristic zero.

Just a note regarding notation. We will write $L_1\oplus L_2$ for the direct sum (or product) of Lie algebras $L_1$ and $L_2$ and we reserve $G_1\times G_2$ for direct product of groups $G_1$ and $G_2$.    
\begin{thm}\label{malcevcorrespondence:thm} Let $k$ be a field of characteristic zero, $\mathcal{G}$ be the category of $k$-groups and $\mathcal{L}$ be the category of nilpotent Lie $k$-algebras. Then there is a category equivalence
$$\rho: \mathcal{L} \to \mathcal{G},$$
such that for any $L\in \mathcal{L}$ and $G=\rho(L)$, there are maps
$$\exp: L \to G, \quad \log: G \to L,$$ satisfying
\begin{enumerate}
\item $\exp(\log(x))=x$ for any $x\in \rho(L)$ and  $\log(\exp(x)) =x$ for any $x\in L$,
\item if $(x,y)$ denotes the Lie bracket in $L$ then $\exp$ satisfies the Campbell-Baker-Hausdorff formula:
$$\exp~ x \cdot \exp~ y =\exp ( x+y +\frac{1}{2}(x,y) + \cdots) $$
where $\cdots $ is a linear combination of Lie brackets of weight greater than 2 with rational coefficients,
\item moreover 
$$[g_1,g_2, \ldots , g_n]=\exp((l_1,\ldots , l_n)+\cdots )$$  
where $g_i=\exp(l_i)$ and $\cdots$ is a finite linear combination of Lie brackets of weight greater than $n$ with rational coefficients, and
$$[x_1,\ldots , x_n]=1 \Leftrightarrow (l_1, \ldots , l_n)=0.$$
\end{enumerate}  
 \end{thm}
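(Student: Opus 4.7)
The plan is to transport the structure between the two categories via the Baker--Campbell--Hausdorff (BCH) series, exploiting the fact that in a nilpotent Lie algebra the formally infinite BCH series truncates and its rational coefficients are legitimate since $\operatorname{char}(k)=0$. Concretely, given a nilpotent Lie $k$-algebra $L$ of class $c$, I would define $\rho(L)$ to be the set $L$ with multiplication
\[
x \ast y \;=\; x + y + \tfrac{1}{2}(x,y) + \tfrac{1}{12}\bigl((x,(x,y))-(y,(x,y))\bigr) + \cdots
\]
and $k$-power $x^a := a\cdot x$ (the original scalar action), with $\exp$ and $\log$ both the identity set-map $L \to \rho(L)$. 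Nilpotency truncates the BCH sum at weight $c$, so this is well-defined; the identity element is $0$ and the inverse of $x$ is $-x$, both immediate.

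The crucial step is verifying the group axioms---especially associativity---and then the $k$-group axioms of Definition~\ref{kgroups:defn}. The cleanest route (following Quillen) is to pass to the completed universal enveloping algebra $\widehat{U(L)}$: one identifies $\exp$ with its formal power series there, establishes the classical identity $\exp(x)\exp(y) = \exp(x\ast y)$ in $\widehat{U(L)}$, and transfers associativity of the associative multiplication in $\widehat{U(L)}$ to associativity of $\ast$. Axioms (1) and (2) for $k$-groups then follow from linearity of scalar multiplication in $L$ together with equivariance of BCH under Lie automorphisms (conjugation in $\rho(L)$ acts on $\log$-coordinates as the adjoint action). For the Hall--Petresco axiom (3), expand both sides using BCH and compare term-by-term inside the free nilpotent Lie $k$-algebra on $x_1,\ldots,x_n$: the ambient freeness reduces the identity to a universal polynomial identity, which one checks by induction on nilpotency class.

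For the inverse functor $\lambda:\mathcal{G}\to\mathcal{L}$, I would proceed by induction along the lower central series. Starting from $\exp(u)\exp(v)=\exp(u\ast v)$, the bracket on $L$ is recovered from the group by $(u,v) \equiv uvu^{-1}v^{-1} \pmod{\text{higher weight}}$, and the previously constructed BCH series then pins down the correction terms. To make this rigorous without circularity one realizes $G$ inside an associative $k$-algebra (the Mal'cev/Magnus completion), defines the Lie bracket by the commutator there, and checks $\lambda\rho \cong \operatorname{Id}_{\mathcal{L}}$ and $\rho\lambda \cong \operatorname{Id}_{\mathcal{G}}$ componentwise. Properties (1), (2), (3) of the theorem then read off: (2) is BCH built into the definition; (1) holds because $\exp,\log$ are set-theoretic identities; (3) follows because the leading term of $\log[\exp(l_1),\ldots,\exp(l_n)]$ is $(l_1,\ldots,l_n)$, higher-weight corrections being rational combinations of brackets of weight $>n$, and the equivalence in (3) then reduces to the fact that the top weight layer of the lower central series of $\rho(L)$ is detected by the top layer of the descending series of ideals of $L$. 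The main obstacle is the rigorous handling of associativity and of the possibly infinite-dimensional case: one needs a completion in which all exponentials and BCH sums converge formally and the universal identities hold uniformly in $k$, which is precisely the technical input supplied by Warfield's Hopf-algebraic extension of Mal'cev and Lazard.
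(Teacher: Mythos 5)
Your outline is correct, and it rests on exactly the machinery the paper itself invokes: the paper's proof of Theorem~\ref{malcevcorrespondence:thm} is purely citational, attributing item (1) to Theorem 12.11 of Warfield, item (2) to the opening of Warfield's Section 12, and item (3) to the discussion following Theorem 4.4 in Baumslag, together with the remark that the rational-coefficient case extends to any field $k$ of characteristic zero. What you have done is unpack those citations: defining $\rho(L)$ as the underlying set of $L$ with the truncated BCH multiplication, proving associativity inside the completed enveloping algebra $\widehat{U(L)}$, checking the Hall--Petresco axiom as a universal identity in a free nilpotent Lie $k$-algebra, and building the inverse functor via an associative embedding is precisely the Quillen--Warfield construction the paper points to, so there is no genuine divergence of method. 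The only place your sketch is thinner than the cited sources is the biconditional in (3): the expansion $[g_1,\ldots,g_n]=\exp((l_1,\ldots,l_n)+\cdots)$ identifies the leading term only modulo higher weight, and your appeal to ``the top weight layer'' does not by itself exclude cancellation of $(l_1,\ldots,l_n)$ against the corrections. The standard repair is to note that every correction term lies in the ideal of $L$ generated by $c=(l_1,\ldots,l_n)$ (if $c=0$ the relevant exponentials already commute in $\widehat{U(L)}$, giving one direction), and for the converse that $c=0$ follows by iterating the relation $c=-(\text{higher-weight element of the ideal generated by }c)$ until nilpotency kills it. Since the paper supplies no more detail than the references, your proposal is, if anything, the more complete account.
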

 \begin{proof}
 Statement (1) is Theorem 12.11 from \cite{war}, Section 12. For (2) we refer to the first paragraph of \cite{war}, Section 12 again. Statement (3) appears in \cite{baumslag} last line of Page 49 (the discussion following Theorem 4.4). Even though in \cite{baumslag} the statement is for the case $k=\Q$ the whole argument carries over to any characteristic zero field $k$ as far as a suitable notion of a $k$-group is available (We refer to the first few lines of \cite{war}, Section 12). 
 \end{proof}
We collected a few more or less direct consequences of the theorem in the following lemma.   
\begin{lem}\label{bi-inter:lem} Assume $G$ is a $k$-group and $L=log(G)$ is the corresponding Lie algebra. Then the following hold.
\begin{enumerate}
\item $L$ is interpretable in $G$ as a pure group and $G$ is interpretable in $L$ as a pure (Lie) ring. Moreover 
\item $G^i=\exp(L^i)$ where $G^i$ (resp. $L^i$) denotes the $i$-th term of the lower central series of $G$ (resp. $L$).
\item $exp(Ann(L))= Z(G)$
\item The group $G$ is directly decomposable into $G=G_1\times G_2$ if and only if $L=L_1\oplus L_2$ where $L_i=\log(G_i)$. 
\end{enumerate}

\end{lem}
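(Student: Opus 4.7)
The plan is to derive all four statements directly from Theorem~\ref{malcevcorrespondence:thm} by exploiting the fact that, because $G$ is nilpotent of some finite class $c$, the Campbell-Baker-Hausdorff formula and its inverse are polynomial expressions of bounded length with rational coefficients. The underlying sets of $L$ and $G$ coincide via $\exp$ and $\log$, so for (1) it suffices to show that the Lie ring operations of $L$ are first-order definable in the pure group $G$ and, conversely, that the group law of $G$ is first-order definable in the pure Lie ring $L$. The key observation is that in a $k$-group of characteristic zero each element has a unique $n$-th root for every $n\in\N$, so the rational power maps $x\mapsto x^{p/q}$ are first-order definable in the pure group language without any parameters from $k$; analogously, the additive group of a $k$-Lie algebra is a $\Q$-vector space, so division by every positive integer is first-order definable in the pure Lie ring language.

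For the interpretation of $G$ in $L$ one defines $g\cdot h:=\exp(X(\log g,\log h))$, where $X$ is the CBH series truncated at weight $c$; this is a Lie-ring formula whose rational coefficients are realized by the definable divisions mentioned above. For the reverse interpretation of $L$ in $G$, I would invert CBH recursively in order of decreasing weight: statement (3) of Theorem~\ref{malcevcorrespondence:thm} expresses each $n$-fold group commutator as $\exp$ of $(l_1,\ldots,l_n)$ plus correction terms of strictly higher weight, so starting at weight $c$ (where corrections vanish) and working downwards one solves for the bracket at each weight; once every bracket is defined, addition is recovered from $\exp(l_1)\exp(l_2)$ by subtracting off the CBH correction, again a polynomial in already-defined brackets with rational coefficients. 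For claim (2), I would induct on $i$: the base case $i=1$ is trivial, and the inductive step writes every generator $[g,h]$ of $G^{i+1}$ (with $g\in G^i$, $h\in G$) as $\exp$ of a sum of iterated brackets all of weight at least $i+1$, combined with the inductive hypothesis $G^i=\exp(L^i)$. For claim (3), statement (3) of Theorem~\ref{malcevcorrespondence:thm} in the case $n=2$ gives $[\exp l,\exp m]=1$ if and only if $(l,m)=0$; quantifying over $m$ yields $\exp(Ann(L))=Z(G)$.

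For (4), the direction $(\Leftarrow)$ goes quickly: if $L=L_1\oplus L_2$, then $(L_1,L_2)=0$, so by Theorem~\ref{malcevcorrespondence:thm}(3) the subgroups $G_i:=\exp L_i$ commute elementwise; the CBH formula applied to $l_1+l_2$ with $l_i\in L_i$ collapses to $\exp(l_1+l_2)=\exp(l_1)\exp(l_2)$, yielding $G=G_1G_2$, while injectivity of $\exp$ forces $G_1\cap G_2=\{1\}$. For $(\Rightarrow)$, set $L_i:=\log G_i$; closure of each $L_i$ under addition and Lie bracket, as well as the vanishing $(L_1,L_2)=0$, all follow from Theorem~\ref{malcevcorrespondence:thm}(3) by the same weight-induction used in (1), and $L_1\cap L_2=0$ follows from $G_1\cap G_2=\{1\}$. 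The main technical obstacle lies in (1): inverting the CBH series so as to produce explicit first-order definitions of $+$ and $(\cdot,\cdot)$ in the pure group $G$ demands careful bookkeeping of the rational coefficients that appear at each stage of the recursion on weight, and one must verify throughout that these coefficients can be realized by the definable rational power maps. This is implicit in standard treatments of the Mal'cev correspondence but deserves to be stated cleanly here since the bi-interpretability in pure signatures is exactly what downstream applications of the lemma will need.
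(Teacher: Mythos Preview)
Your proposal is correct and follows essentially the same underlying mathematics as the paper, but the presentation is quite different. The paper's own proof is almost entirely a citation: for (1)--(3) it simply refers to Lemma~2.3 of Altinel--Wilson~\cite{AW} (which in turn rests on Stewart~\cite{stewart}), and for (4) it gives a two-line argument using Theorem~\ref{malcevcorrespondence:thm}(3) together with the remark that subgroups and subalgebras correspond. You, by contrast, give a self-contained derivation of all four parts directly from Theorem~\ref{malcevcorrespondence:thm}, inverting the Campbell--Baker--Hausdorff series by downward induction on weight and using the definability of unique rational roots in a $k$-group. What your approach buys is transparency: a reader sees exactly why the bi-interpretability holds in the pure signatures, which is indeed the point downstream applications need. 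What the paper's approach buys is brevity and an appeal to the literature where these formulas are already recorded.

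One small omission in your treatment of (4), direction $(\Rightarrow)$: you verify that each $L_i$ is a subalgebra, that $(L_1,L_2)=0$, and that $L_1\cap L_2=0$, but you do not explicitly check $L=L_1+L_2$. This follows immediately once $(L_1,L_2)=0$ is established: for any $l\in L$ write $\exp(l)=g_1g_2$ with $g_i\in G_i$; since $g_1,g_2$ commute the CBH series collapses and $l=\log(g_1)+\log(g_2)\in L_1+L_2$. It is worth stating this step, since without it the direct-sum decomposition is incomplete.
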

\begin{proof} All the statements are well known. For proofs of (1)-(3) refer to Lemma 2.3 in~\cite{AW}. The proofs in the said reference rely on results from~\cite{stewart}. Using (2) and (3) of Theorem~\ref{malcevcorrespondence:thm} one can check that subgroups and subalgebras correspond to each other. So to prove (4) assume $G=G_1\times G_2$. Then $(L_1,L_2)=0$ by (3) of Theorem~\ref{malcevcorrespondence:thm} since $[G_1,G_2]=1$. The other direction is checked similarly.\end{proof} 
\section{Nilpotent groups}\label{groups:sec}
\subsection{$\omega$-stable nilpotent groups}
In this section we prove Theorems~\ref{w-stable-group:mainthm} and \ref{wstable:arbitgroup:mainthm}. The proofs are really just easy applications of the results obtained in the previous sections.

\begin{lem}\label{stable-divisible} A torsion-free $\omega$-stable nilpotent group is a $\Q$-group.\end{lem}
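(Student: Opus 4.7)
The plan is to reduce the statement to showing that $G$ is (uniquely) divisible, since for torsion-free nilpotent groups divisibility is equivalent to admitting a $\Q$-exponent: uniqueness of $n$-th roots is automatic in the torsion-free nilpotent setting, and once $n$-th roots exist uniquely one defines $x^{p/q}$ as the unique $q$-th root of $x^p$ and verifies the axioms of Definition~\ref{kgroups:defn} using the Hall--Petresco formula. So the real content is to prove existence of $n$-th roots.

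I would proceed by induction on the nilpotency class $c$ of $G$. The base case $c=1$ is just Macintyre's Theorem~\ref{Mac:stable:tfa}: a torsion-free $\omega$-stable abelian group is divisible. For the inductive step, the center $Z(G)$ is definable in $G$, hence $\omega$-stable, and it is a torsion-free abelian group, so by Macintyre it is divisible. The quotient $G/Z(G)$ is interpretable in $G$, hence $\omega$-stable; it is nilpotent of class $c-1$; and it is torsion-free because in any torsion-free nilpotent group the factors of the upper central series are torsion-free (a standard result; see, e.g., the treatment of isolators). By the induction hypothesis, $G/Z(G)$ is a $\Q$-group, in particular divisible.

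Now, given $x\in G$ and $n\in\N^+$, divisibility of $G/Z(G)$ yields $y_0\in G$ with $y_0^n\equiv x\pmod{Z(G)}$, i.e.\ $y_0^n=xz$ for some $z\in Z(G)$. By divisibility of $Z(G)$ pick $w\in Z(G)$ with $w^n=z^{-1}$. Since $w$ is central, $y:=y_0w$ satisfies
\[
y^n=(y_0w)^n=y_0^n w^n=xz\cdot z^{-1}=x,
\]
so $G$ is divisible. Uniqueness of the root $y$ then follows from the classical fact that in a torsion-free nilpotent group $y_1^n=y_2^n$ implies $y_1=y_2$ (e.g.\ via induction on class using the center). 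Hence $G$ is a uniquely divisible torsion-free nilpotent group, and therefore a $\Q$-group.

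The main potential obstacle is the torsion-freeness of $G/Z(G)$, which is the one non-trivial algebraic input beyond Macintyre's theorem; everything else is bookkeeping. An alternative, more streamlined route would be to first show that the isolator-style closure properties available in $\omega$-stable groups (via descending chain conditions on definable subgroups) make the upper central series factors definable and torsion-free without appealing to classical facts, but the inductive argument above seems to be the cleanest path.
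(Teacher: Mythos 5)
Your proof is correct, but it takes a different route from the paper's. The paper avoids induction on the nilpotency class entirely: for each $g\in G$ it observes that $g$ lies in $Z(C_G(g))$, a definable (hence $\omega$-stable) torsion-free abelian subgroup, which by Macintyre's theorem is a $\Q$-vector space; so all rational powers of $g$ exist and are unique already inside this abelian subgroup, with no lifting step and no appeal to properties of $G/Z(G)$. You instead induct on the class, using Macintyre for the base case and for $Z(G)$, and then lift divisibility from $G/Z(G)$ through the center; this is sound, but it requires two extra classical inputs --- that the upper central factors of a torsion-free nilpotent group are torsion-free (so that the inductive hypothesis applies to $G/Z(G)$), and the uniqueness of $n$-th roots in torsion-free nilpotent groups, which in your argument must be invoked separately since the existence proof does not produce uniqueness. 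The paper's trick gets existence and uniqueness simultaneously from a single application of Macintyre's theorem, which is why it is stated in two lines; your version is longer but makes the reduction to the abelian case more transparent and would generalize to settings where one controls the central quotient rather than centralizers. Both proofs then conclude identically, verifying the axioms of Definition~\ref{kgroups:defn} via the Hall--Petresco formula and the fact that torsion-free nilpotent groups are $\Z$-groups.
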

\begin{proof} Note that for any element $g\in G$ the centralizer $C_G(g)=\{x\in G: xg=gx\}$ is definable in $G$. Then the center $Z(C_G(g))$ of $C_G(g)$ is a non-empty torsion-free abelian group which is definable in $G$. So $Z(C_g(g))$ is a torsion-free $\omega$-stable abelian group, which by Theorem~\ref{Mac:stable:tfa} and structure theory of divisible abelian groups is a $\Q$-vector space. In particular all rational powers of $g$ exist and are uniquely defined. Now the fact that $G$ satisfies axioms (1)-(3) of Definition~\ref{kgroups:defn} follows easily from the existence and uniqueness of roots and the fact that torsion-free nilpotent groups are $\Z$-groups (See~\cite{war}, Section 6) and in particular satisfy Hall-Petresco formula.
 
\end{proof}
Let $G$ be a $k$-group. Then any direct complement $G_0$ of $G'\cap Z(G)$ in $Z(G)$ is called an \emph{addition} of $G$. Since $G$ is a $k$-group then one can actually write $G=G_f\times G_0$ for some $k$-subgroup $G_f\cong G/G_0$ of $G$, called a $\emph{foundation}$ of $G$.\\
 
\noindent\emph{Proof of Theroem~\ref{w-stable-group:mainthm}.} By Lemma~\ref{stable-divisible}, $G$ is a $\Q$-group. By Lemma~\ref{bi-inter:lem}, the Lie $\Q$-algebra $L=\log(G)$ is interpretable in $G$. Hence $L$ is $\omega$-stable. By Theorem~\ref{w-st-al:mainthm}, $L$ can be decomposed into a finite direct sum
$$L\cong L_1\oplus\cdots \oplus L_n\oplus L_0$$ where each $L_i$, $i \neq 0$, is an indecomposable $k_i$-algebra, $k_i$ is a characteristic zero algebraically closed field, and $L_0$ is an addition of $L$, which is a $\Q$-algebra with zero multiplication. By parts (2), (3) and (4) of Lemma~\ref{bi-inter:lem}, $G$ can be decomposed into a finite direct product:
$$G\cong  G_1\times \cdots \times G_n\times G_0,$$
where $G_i=exp(L_i)$, $i=1,\ldots, n$, and $G_0=\exp(L_0)$. Since $L_i$ carries a $k_i$ structure, $G_i$ is a $k_i$-group and $G_0$ is an addition of $G$ and a $\Q$-vector space. 
 
 \qed

\noindent\emph{Proof of Theorem~\ref{wstable:arbitgroup:mainthm}.} The statement is a direct corollary of Theorems~\ref{w-stable-group:mainthm}, \ref{nesin1} and \ref{nesin2}. We would just like to remark that now the addition $D_0$ is a direct product of an abelian $\Q$-group and a divisible torsion abelian group (which is a direct product of all Pr\"ufer $p$-groups, $Z(p^\infty)$).

\qed
\subsection{Nilpotent groups of finite  Morley rank}\label{fmrG:section}~
Here we provide proofs of Theorems~\ref{fmr:groups:mainthm}, \ref{fmr:garbit:mainthm} and \ref{BN-conjecture}. Again the hard work is already done.

\noindent \emph{Proof of Theorem~\ref{fmr:groups:mainthm}.} The if direction is clear. The proof of the only if direction follows the same plan as the proof of Theorem~\ref{w-stable-group:mainthm}, we just need to use the only if part of Theorem~\ref{fmr-alg:mainthm} instead. 

\qed

\noindent \emph{Proof of Theorem~\ref{fmr:garbit:mainthm}.} (1) $\Rightarrow$ (2) is similar to that of Theorem~\ref{wstable:arbitgroup:mainthm}, just use Theorem~\ref{fmr:groups:mainthm} and Theorem~\ref{nesin1a} instead of Theorem~\ref{w-stable-group:mainthm} and Theorem~\ref{nesin1}. Note that the addition $D_0$ is a divisible abelian group so it contains an abelian $\Q$-subgroup $Q$ so that $D_0 = T \times Q$, where $T$ is the torsion part of $D$. The subgroup $T$ has finite Morley rank by Theorem~\ref{nesin2}. The subgroup $Q$ is just a $\Q$-vector space so of finite Morley rank. (2) $\Rightarrow$ (1) follows from Theorem~\ref{nesin1a}. 

\qed

\noindent \emph{Proof of Corollary~\ref{BN-conjecture}.} Just note that the subgroup $N$ coincides with a product $$N=D_1 \times \cdots \times D_n\times Q$$ where each $D_i$ is a unipotent algebraic group over an algebraically closed field of characteristic zero and $Q$ is an abelian $\Q$-group. Since each component of $N$ has finite    Morley rank, hence does $N$.

\qed

The following two theorems follow as corollaries of our results. For Theorem~\ref{AW:thm} just pick the algebraically closed field $k$ whose cardinality is maximal among those of the $k_i$ and $G_0$, from the decomposition given in Theorem~\ref{fmr:groups:mainthm}. The addition $G_0$ will have a 1-dimensional representation over $k$, while the unipotent components have unitriangular representations over $k$ by Theorem 11.7 of~\cite{war}, or by Ado-Iwaswa Theorem and the Mal'cev correspondence. Theorem~\ref{Z:thm} follows from Theorem~\ref{cat:alg:thm} and the Mal'cev correspondence.
  
\begin{thm}[\cite{AW}, Theorem~1]\label{AW:thm} A torsion-free  nilpotent group of finite    Morley rank has a faithful finite-dimensional representation over a field of characteristic zero.\end{thm}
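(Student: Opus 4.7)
The strategy is to reduce, via Theorem~\ref{fmr:groups:mainthm}, to exhibiting faithful finite-dimensional representations of each direct factor in the decomposition
\[
G \cong G_1 \times \cdots \times G_n \times G_0,
\]
where each $G_i$ ($i \geq 1$) is a directly indecomposable unipotent algebraic group over an algebraically closed field $k_i$ of characteristic zero, and $G_0$ is a torsion-free divisible abelian group of finite Morley rank. The last condition forces $G_0$ to be a $\Q$-vector space of finite $\Q$-dimension, since finite Morley rank bounds the $\Q$-dimension of any torsion-free divisible abelian group.

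Next I would pick an algebraically closed field $k$ of characteristic zero containing each $k_i$ as a subfield --- for instance the algebraic closure of the compositum of the $k_i$ inside a fixed universal domain. Any such $k$ has infinite $\Q$-dimension (it contains $\overline{\Q}$), so the finite-dimensional $\Q$-vector space $G_0$ embeds into the additive group $(k,+)$ as a $\Q$-subspace, giving a faithful $2$-dimensional unitriangular representation of $G_0$ over $k$ via $a \mapsto \left(\begin{smallmatrix} 1 & a \\ 0 & 1 \end{smallmatrix}\right)$.

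For each unipotent component $G_i$, the very definition of a unipotent algebraic group over $k_i$ supplies a faithful closed embedding $\rho_i : G_i \hookrightarrow \mathrm{UT}(d_i, k_i)$ for some finite $d_i$; equivalently, one may pass to the Lie $k_i$-algebra $L_i = \log(G_i)$ of Section~\ref{malcev-cor:sec}, which is finite-dimensional by Theorem~\ref{fmr-alg:mainthm}, apply Ado--Iwasawa (or Theorem~11.7 of~\cite{war}) to get a faithful strictly-upper-triangular representation of $L_i$, and exponentiate. Extending scalars embeds each $\rho_i$ into $\mathrm{UT}(d_i, k)$. Taking the block-diagonal sum of these $\rho_i$ with the $2$-dimensional representation of $G_0$ yields a faithful representation of $G$ into $\mathrm{GL}(d_1 + \cdots + d_n + 2,\, k)$, as desired.

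I expect no deep obstacle: the decomposition theorem does all the heavy structural work, and the classical representation theory of unipotent algebraic groups (equivalently, of finite-dimensional nilpotent Lie algebras) handles each non-abelian factor. The only two points that require brief care are (i) extracting $\dim_\Q G_0 < \infty$ from the finite Morley rank hypothesis and (ii) choosing $k$ large enough to contain every $k_i$ and absorb the embedding of $G_0$ --- both are routine cardinality matters.
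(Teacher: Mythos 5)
Your overall strategy is exactly the paper's: decompose $G$ via Theorem~\ref{fmr:groups:mainthm}, represent each unipotent factor unitriangularly over its field $k_i$ (Warfield's Theorem~11.7, or Ado--Iwasawa plus the Mal'cev correspondence), handle $G_0$ by embedding it into a single large field $k$, and take a block-diagonal sum. However, one of your two ``routine'' points is false: finite Morley rank does \emph{not} bound the $\Q$-dimension of a torsion-free divisible abelian group. The theory of torsion-free divisible abelian groups is strongly minimal, so $\Q^{(\kappa)}$ has Morley rank $1$ for every cardinal $\kappa$; nothing forces $\dim_\Q G_0<\infty$. Consequently the step that depends on this claim can fail: if you take $k$ to be merely the algebraic closure of the compositum of the $k_i$ (e.g.\ $k=\overline{\Q}$ when each $k_i$ is countable, or when $n=0$ and there are no $k_i$ at all), then $\dim_\Q(k,+)=|k|$ may be strictly smaller than $\dim_\Q G_0$, and $G_0$ does not embed into $(k,+)$.

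The repair is exactly the paper's choice: take $k$ to be an algebraically closed field of characteristic zero whose cardinality is at least the maximum of the $|k_i|$ and $|G_0|$ (such fields exist in every infinite cardinality). Then each $k_i$ embeds into $k$, and $G_0$, being a $\Q$-vector space of dimension at most $|k|=\dim_\Q(k,+)$, embeds into $(k,+)$, so your $2$-dimensional unitriangular representation $a\mapsto\left(\begin{smallmatrix}1 & a\\ 0 & 1\end{smallmatrix}\right)$ works (the paper even gets away with a $1$-dimensional one). With that substitution the rest of your argument --- finite-dimensionality of each $L_i$ from Theorem~\ref{fmr-alg:mainthm}, scalar extension of each $\rho_i$ to $\mathrm{UT}(d_i,k)$, and the block-diagonal sum --- goes through and coincides with the paper's proof. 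Note the resulting representation is still finite-dimensional even when $\dim_\Q G_0$ is infinite, since only the dimension over $k$ matters.
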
 

\begin{thm}[\cite{Z82}] \label{Z:thm} A torsion-free non-abelian nilpotent group is $\aleph_1$-categorical if and only if it is an indecomposable finite dimensional $k$-group, where $k$ is a characteristic zero algebraically closed field.\end{thm}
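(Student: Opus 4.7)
The plan is to reduce the statement about groups to the statement Theorem~\ref{cat:alg:thm} about rings via the Mal'cev correspondence. First, I would observe that $\aleph_1$-categoricity implies $\omega$-stability, so a torsion-free $\aleph_1$-categorical nilpotent group $G$ satisfies the hypotheses of Lemma~\ref{stable-divisible} and is a $\Q$-group. This unlocks Theorem~\ref{malcevcorrespondence:thm}, allowing me to form the nilpotent Lie $\Q$-algebra $L=\log(G)$. By Lemma~\ref{bi-inter:lem}(1), $G$ and $L$ are mutually interpretable as pure structures, and since $\aleph_1$-categoricity is preserved under bi-interpretability, $L$ is $\aleph_1$-categorical as well. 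Moreover, the hypothesis that $G$ is non-abelian combined with part (3) of Theorem~\ref{malcevcorrespondence:thm} (applied with $n=2$) forces $L$ to have a non-trivial Lie bracket, so $L$ is not a Lie algebra with zero multiplication.

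Next, I would apply Theorem~\ref{cat:alg:thm} to $L$: being a characteristic zero $\aleph_1$-categorical ring without zero multiplication, $L$ is a directly indecomposable $k$-algebra over an uncountable characteristic zero algebraically closed field $k$. Since $\aleph_1$-categoricity implies finite Morley rank, Theorem~\ref{fmr-alg:mainthm} forces $L$ to be finite-dimensional over $k$.

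To translate this back to $G$, I would invoke Lemma~\ref{bi-inter:lem}(4), which says that direct product decompositions of $G$ correspond bijectively to direct sum decompositions of $L$; hence $G=\exp(L)$ is directly indecomposable as a $k$-group (the $k$-group structure on $G$ is the one given by exponentiating the $k$-action on $L$), and it is finite dimensional in the sense that its Mal'cev Lie algebra is finite dimensional over $k$. The converse direction follows by running exactly the same argument backwards: starting from an indecomposable finite-dimensional non-abelian $k$-group $G$, the Lie algebra $L=\log(G)$ is an indecomposable finite-dimensional $k$-algebra without zero multiplication, hence $\aleph_1$-categorical by the ``if'' direction of Theorem~\ref{cat:alg:thm}, and bi-interpretability transfers this to $G$.

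The main obstacle is really just bookkeeping rather than a substantive new argument: one must verify carefully that ``non-abelian'' on the group side corresponds to ``without zero multiplication'' on the Lie algebra side, that direct decompositions match up under $\exp/\log$, and that the $k$-group structure on $G$ that comes out of the Mal'cev correspondence with the $k$-algebra $L$ coincides with the notion of $k$-group in Definition~\ref{kgroups:defn}. All of these checks are essentially packaged into Lemma~\ref{bi-inter:lem} together with Theorem~\ref{malcevcorrespondence:thm}, so no independent technical work is required. The genuinely new content has already been done in the proof of Theorem~\ref{cat:alg:thm}; the present result is the translation of that ring-theoretic statement across the Mal'cev dictionary.
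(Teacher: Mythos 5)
Your proposal is correct and follows exactly the route the paper takes: the paper's entire proof is the remark that Theorem~\ref{Z:thm} ``follows from Theorem~\ref{cat:alg:thm} and the Mal'cev correspondence,'' and your argument is precisely that reduction with the intermediate steps (Lemma~\ref{stable-divisible}, Lemma~\ref{bi-inter:lem}, and Theorem~\ref{fmr-alg:mainthm} for finite dimensionality) made explicit. No discrepancies to report.
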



\begin{thebibliography}{99}
 \bibitem{AW} T. Altinel, J. S. Wilson, On the linearity of torsion-free nilpotent groups of finite    Morley rank, Proc. Amer. Math. Soc. 137 (5) (2009) 1813-1821.
 \bibitem{ABC} T. Altinel, A. V. Borovik, G. Cherlin, Simple groups of finite Morley rank, Mathematical Surveys and Monographs 145, Providence, R.I.: American Mathematical Society,  (2008), ISBN 978-0-8218-4305-5, MR 2400564.
 \bibitem{baud} A. Baudisch, A new uncountably categorical group, Trans. Amer. Math. Soc. 348 (10) (1998) 3889-3940.
 \bibitem{baumslag} G. Baumslag, Lecture Notes on Nilpotent Groups, Regional Conference Series in Mathematics, No. 2 (1971). 
  \bibitem{BN94}  A. V. Borovik, A. Nesin, Groups of Finite Morley Rank, Oxford Logic Guides 26, New York: Oxford University Press, (1994).
%\bibitem{Baur} W. Baur, Elimination of quantifiers for modules, Israel J. Math. 25 (1976) 64-70.
%\bibitem {beleg92} O. V. Belegradek, The Mal'cev correspondence revisited, Proceedings of the
 %International Conference on Algebra, Part 1 (Novosibirsk, 1989), 37-59. English trans. Contemp. Math., 131, Part 1,
 %Amer. Math. Soc., Providence, RI, 1992.
%\bibitem{beleg94} O. V. Belegradek, The model theory of unitriangular groups, Ann. Pure App. Logic,
%68 (1994) 225-261.
%\bibitem{beleg99} O. V. Belegradek, Model theory of unitriangular groups, Model theory and applications,
 %1-116, Amer. Math. Soc. Transl. Ser. 2, 195, Amer. Math. Soc., Providence, RI, 1999.
%\bibitem{Brown} K. S. Brown, Cohomology of Groups, Springer-Verlag New York, 1982.  
%\bibitem{BM04}     E.I. Bunina, A.V. Mikhalev, Elementary properties of linear groups and related questions, J. Math. Sci. 123(2) (2004) 3921-3985.
 %   \bibitem{BM05}   E.I. Bunina, A.V. Mikhalev,  Combinatorial and Logical Aspects of Linear Groups and Chevalley Groups, Acta Applicandae  Mathematicae, 85(1-3) (2005) 57-74.



%\bibitem{eklof}P. C. Eklof, and R. F. Fischer, The elementary theory of abelian groups, Ann. Math.  Logic, 4(2) (1972)  115-171.
% \bibitem{ershov4} Yu. Ershov,  Elementary group theories,(Russian), Dokl. Akad. Nauk SSSR (1972), 1240-1243;  English translation in Soviet Math. Dokl. 13 (1972), 528-532

\bibitem{hall} P. Hall, Nilpotent Groups, Notes of lectures given at the Canadian Mathematical Congress, University of Alberta, August 1957.

%\bibitem{KM3}  O. Kharlampovich, A. Myasnikov, Elementary theory of free nonabelian groups, J. Algebra, 302(2) (2006) 451-552.
%\bibitem{magnus}M. Magnus, A.  Karrass, D. Solitar, Combinatorial Group Theory: Presentations of Groups in Terms of Generators and Relations, Pure and Applied Mathematics Vol. XIII, John Wiley \& Sons, Inc. 1966.
%\bibitem{Malcev1} A. I. Mal'cev, On free solvable groups, Doklady AN SSSR, 130(3) (1960) 495-498.

%\bibitem{malcev2} A. I. Mal'cev, On a certain correspondence between rings and groups, (Russian) Mat. Sobronik 50 (1960) 257-266; English translation in A. I. Mal'cev, The Metamathematics of Algebraic Systems, Collected papers: 1936-1967, Studies in logic and Foundations of Math. Vol. 66, North-Holland Publishing Company, (1971).

%\bibitem{malcev3} A. I. Mal'cev, The elementary properties of linear groups, Certain Problems in Math. and Mech., Sibirsk. Otdelenie Akad. Nauk SSSR, Novosibirsk, 1961, pp. 110-132. English transl. in A.I.Mal'tsev, The metamathematics of algebraic systems. Collected papers: 1936-1967, North-Holland, Amsterdam, 1971.

%\bibitem{Monk} L Monk, Elementary-recursive decision procedures, Ph.D. Dissertation, Univ. Calif. Berkeley, 1975.

%\bibitem{MR1} A.G. Myasnikov and V.N. Remeslennikov, Classification of nilpotent power groups by their elementary properties, Trudy Inst. Math. Sibirsk Otdel. Akad. Nauk SSSR, 2 (1982) 56-87.
%\bibitem{M1984} A. G. Myasnikov, Elementary equivalence of finitely-generated nilpotent groups, 9th All-Union Symposium on Group Theory, Thesis Report, Lenin Moscow State Pedagogical Institute, Moscow (1984) 46-47. 
%\bibitem{MR2} A.G. Myasnikov and V.N. Remeslennikov, Definability of the set of Mal'cev bases and elementary theories of finite-dimensional algebras I, Sibirsk. Math. Zh., 23(5) (1982) 152-167. English transl., Siberian Math. J. 23 (1983) 711-724.

%\bibitem{MR3} A.G. Myasnikov and V.N. Remeslennikov, Definability of the set of Mal'cev bases and elementary theories of finite-dimensional algebras II, Sibirsk. Math. Zh. 24(2) (1983) 97-113. English transl., Siberian Math. J. 24 (1983) 231-246.
\bibitem{CR} G. L. Cherlin, J. Reineke, Categoricity and stability of commutative rings, Ann. Math. Logic, 10 (1976) 367-399.

\bibitem{F06} O. Fr\'econ, Around unipotence in groups of finite Morley rank, J. Group Theory 9 (2006), 341-359.
\bibitem{F08} O. Fr\'econ, Conjugacy of Carter subgroups in groups of finite Morley rank, J. Math. Logic, Vol. 8, No. 1 (2008) 41-92.
\bibitem{mac1} A. Macintyre, On $\omega_1$-categorical theories of abelian groups, Fund. Math. 70 (1970) 253-270.
\bibitem{mac2} A. Macintyre, On $\omega_1$-categorical theories of fields, Fund. Math. 71 (1971) 1-25.
%\bibitem{Mconf} Myasnikov A. Stable torsion-free rings and nilpotent groups. In:
%{\it VII Soviet   Union conf. on math. logic, Novosibirsk, 1984.}
% \bibitem{MS87}  A. G. Myasnikov,  Elementary theories and abstract isomorphisms of finite-dimensional algebras and unipotent groups, Dokl. Akad. Nauk SSSR, 297(2) (1987) 290-293.
 
%\bibitem{MR4} A. G. Myasnikov, V. N. Remeslennikov, Recursive $p$-adic numbers and elementary theories of finitely generated pro-$p$-groups. (Russian) Izv. Akad. Nauk SSSR Ser. Mat. 51 (1987), no. 3, 613--634; translation in Math. USSR-Izv. 30 (1988), no. 3, 577-597.

%\bibitem{MR1989} A. G. Myasnikov, V. Remeslennikov, Model-theoretic questions in group theory. (Russian)
 %Problems in Algebra, Minsk (1989) no. 4, 16-32.
\bibitem{M89} A. G. Myasnikov,  Elementary theory of a module over a local ring,  Siberian Math. J. (30) 3 (1989) 72-83.
\bibitem{M90a} A. G. Myasnikov, Definable invariants of bilinear mappings, (Russian) Sibirsk. Mat. Zh. 31(1) (1990) 104-115, English trans. in: Siberian Math. J. 31(1) (1990) 89-99.
\bibitem{M90c} A. G. Myasnikov, The structure of models and a criterion for the decidability of complete theories of finite-dimensional algebras, (Russian) Izv. Akad. Nauk SSSR Ser. Mat. 53(2) (1989) 379-397; English translation in Math. USSR-Izv. 34(2) (1990) 389-407.
\bibitem {M90b} A. G. Myasnikov, The theory of models of bilinear mappings, (Russian) Sibirsk. Mat. Zh. 31(3) (1990)  94-108, English trans. in: Siberian Math. J. 31(3) (1990) 439-451.
%\bibitem{MS}A. G. Myasnikov and M. Sohrabi, Groups elementarily equivalent to a free 2-nilpotent group of finite rank, Alg. and Logic, 48(2) (2009), 115-139
\bibitem {MS2010} A. G. Myasnikov, M. Sohrabi, Groups elementarily equivalent to a free nilpotent group of finite rank, Ann. Pure Appl. Logic 162(11) (2011), 916-933.

\bibitem{MS2014}  A. G. Myasnikov, M. Sohrabi, Elementary coordinatization of finitely generated nilpotent groups, Preprint.

%\bibitem{oger} F. Oger,  Cancellation and elementary equivalence of finitely generated finite-by-nilpotent
%groups, J. London Math. Society 44(2) (1991) 173-183.
% \bibitem{Poi88} B. Poizat, MM. Borel, Tits, Zil'ber et le General Nonsense, J. Symb. Logic, 53 (1988), 124-131.
 %   \bibitem{Poibook} B. Poizat, Stable groups, Math. Surveys and Monographs, v. 87, AMS, 2001.

%\bibitem{robin}D. J. S. Robinson, A Course in the Theory of Groups, 2nd edn. Springer-Verlag New York, 1996.
%\bibitem {Sela6} Z. Sela. Diophantine geometry over groups VI: The elementary
%theory of a free group, GAFA, 16 (2006) 707-730.
%\bibitem{julia} Julia Robinson, The undecidability of algebraic rings and fields, Proc. Amer. Math. Soc. 10 (1959) 950-957.
%\bibitem{Szmielew} W. Szmielew,  Elementary properties of abelian groups, Fund. Math. 41 (1955) 203-271.
\bibitem{N91} A. Nesin, Poly-separated and $\omega$-stable nilpotent groups, J. Symb. Logic 56(1991) 694-699. 
\bibitem{Poizat} B. Poizat, Stable groups, Mathematical Surveys and Monographs 87, Providence, RI: American Mathematical Society,  (2001) pp. xiv+129.
\bibitem{Q} D. Quillen, Rational Homotopy Theory, Ann. Math. 90 (1960), 205-295.
\bibitem{S} M. Sohrabi, On the elementary theories of free nilpotent Lie algebras and free nilpotent groups, Ph.D. Thesis, Carleton University, Ottawa/Canada (2009).
\bibitem{stewart} I. Stewart. An algebraic treatment of Mal'cev’s theorem concerning nilpotent Lie groups and
their Lie algebras. Compositio Math. 22 (1970), 289-312.
\bibitem{Wagner} F. O. Wagner, Stable groups, Cambridge University Press,  (1997), ISBN 0-521-59839-7.
\bibitem{war} Jr. R. B. Warfield, \textit{Nilpotent Groups}, Lecture notes in Math. 513, Springer-Verlag Berlin 1976.
\bibitem{ZSI} O. Zariski and P. Samuel, Commutative algebra. I, Van Nostrand, Princeton (1958).
\bibitem{ZSII} O. Zariski and P. Samuel, Commutative algebra. II, Van Nostrand, Princeton (1960).

%\bibitem{Z71} B. Zilber, An example of two elementarily equivalent, but not isomorphic finitely generated nilpotent groups of class 2, Alg. and Logic, 10(3) (1971) 173-188.

\bibitem{Z74} B. I. Zilber, Rings with $\aleph_1$-categorical theories. Algebra and Logic 13 (1974), 95-104 (Translation from Algebra i Logica 13, No. 2 (1974), 168-187). 
\bibitem{Z82} B. I. Zilber, Uncountable categorical nilpotent groups and Lie algebras. Soviet Math. (Iz. VUZ) 26(5) (1982) 98-99.
%\bibitem{Z84} B. Zilber, Some model theory of simple algebraic groups over algebraically closed fields, Colloq. Math. 48(2) (1984) 173-180.
\end{thebibliography}
\end{document}